\newtheorem*{rep@theorem}{\rep@title}
\newcommand{\newreptheorem}[2]{%
\newenvironment{rep#1}[1]{%
 \def\rep@title{#2 \ref{##1}}%
 \begin{rep@theorem}}%
 {\end{rep@theorem}}}
\newtheorem{lemma}{Lemma}
\newtheorem{proposition}[lemma]{Proposition}
\newtheorem{theorem}{Theorem}
\newtheorem{corollary}[lemma]{Corollary}
\theoremstyle{definition}
\newtheorem{definition}{Definition}
\newtheorem{question}{Question}
\newtheorem{example}{Example}
\newtheorem{remark}[lemma]{Remark}
\newtheorem*{theorem*}{Theorem}
\newcommand{\Q}{{\mathbb Q}}
\newcommand{\N}{{\mathbb N}}
\newcommand{\Z}{{\mathbb Z}}
\newcommand{\lcm}{{\rm lcm}}
\newcommand{\ord}{{\rm ord}}
\newcommand{\denom}{{\rm denom}}
\newcommand{\numer}{{\rm numer}}
\title[The $p\, $--adic Order of Power Sums and the Erd\H{o}s\,--Moser Equation]{The $\boldsymbol{p}\,$--adic Order of Power Sums,\\the Erd\H{o}s\,--Moser Equation, and Bernoulli Numbers}
\begin{document}

\author{Jonathan Sondow}
\address{209 West 97th Street, New York, NY 10025}
\email{jsondow@alumni.princeton.edu}
\author{Emmanuel Tsukerman}
\address{Department of Mathematics, University of California, Berkeley, CA 94720-3840}
\email{e.tsukerman@berkeley.edu}

\date{}
\maketitle

\begin{abstract}
The \emph{Erd\H{o}s--Moser equation} is a Diophantine equation proposed more than 60 years ago which remains unresolved to this day. In this paper, we consider the problem in terms of divisibility of power sums and in terms of certain Egyptian fraction equations. As a consequence, we show that solutions must satisfy strong divisibility properties and a restrictive Egyptian fraction equation. Our studies lead us to results on the Bernoulli numbers and allow us to motivate Moser's original approach to the problem. 
\end{abstract}

\begin{center}\section*{Table of Contents}\end{center}

\begin{itemize}
  \item[] Section 1. Introduction
  \item[] Section 2. Main Results
  \item[] Section 3. Power Sums
  \item[] Section 4. Egyptian Fraction Equations
  \item[] Section 5. Bernoulli Numbers
  \item[] Section 6. Moser's Mathemagical Rabbits
\end{itemize}

\section{Introduction}

For $m\in\N$ and $n\in\Z$, define the \emph{power sum}
\[S_n(m):=\sum_{j=1}^{m}j^n = 1^n+2^n+\dotsb +m^n\]
and set $S_n(0):=0$. The \emph{Erd\H{o}s--Moser equation} is the Diophantine equation
\begin{equation} \label{EQ:EME}
S_{n}(m)=(m+1)^n.
\end{equation}
Erd\H{o}s and Moser \cite{Moser} conjectured that the only solution is the trivial solution $1+2=3$, that is, $(m,n)=(2,1)$. See Moree's surveys ``A top hat for Moser's four mathemagical rabbits''~\cite{MoreeTopHat} and \cite{MoreeOnMoser}, as well as Guy's discussion in~\cite[Section D7]{Guy}.

The \emph{generalized Erd\H{o}s--Moser equation} is the Diophantine equation
\begin{equation} \label{EQ:GEME}
S_{n}(m)=a(m+1)^n.
\end{equation}
Moree \cite{Moree} conjectured that the only solution is the trivial solution
\[
1+2+3+\dots+2a=a(2a+1),
\]
that is, $(m,n)=(2a,1)$.

In this paper, we consider the equations from two angles: as problems on the divisibility of power sums and as problems on Egyptian fraction equations. In the final two sections, we consider implications of our results to the Bernoulli numbers, and motivate Moser's ``mathemagical rabbits.''

\section{Main Results}

 For $q\in\Z$ and prime~$p$, the \emph{$p$-adic order of $q$} is the exponent $ v_p(q)$ of the highest power of $p$ that divides~$q$:
 $$v_p :\Z\to\N\cup\{0,\infty\}, \qquad v_p(q):=\sup_{p^d\mid q}d.$$
We note that the domain of definition of $v_p$ can be extended to the $p$-adic integers ${\Z}_p\supset \Z$ by considering the digits of the base $p$ expansion.

We also define a map
\[
V_p: \mathbb{Z}_p \rightarrow \N\cup \{0,\infty\}, \qquad V_p(m):=v_p(m-\lfloor\frac{m}{p}\rfloor)+1.
\]
 This function can be interpreted as follows: $V_p(m)$ counts the number of equal $p$-digits at the end of the base $p$ expansion of $m \in {\Z}_p$. That is, if we write $m$ in base~$p$ as
\[
m={\dots a_{k} \dots a_1 a_0}_p=\sum_{i=0}^{\infty} a_i p^i,
\]
and let 
\[
h=\sup\{i \in \N\cup \{0\}: a_i=a_j  \ \forall \ 0 \leq j \leq i\},
\]
then $V_p(m)=h+1$.

\begin{reptheorem}{porderOfPowerSums}
Let $p$ be an odd prime and let $m$ be a positive integer.\\
{\rm(i).} In case $m\equiv0$ or $-1\pmod{p}$, we have
\[
v_p(S_n(m)) \begin{cases} =v_p(S_{p-1}(m))=V_p(m)-1\quad \text{ if }p-1\mid n,\\
                                                 \geq V_p(m)\qquad\qquad\qquad\qquad\, \quad\text{ if }p-1\nmid n.
\end{cases}
\]
{\rm(ii).} In case $m\equiv \frac{p-1}{2} \pmod{p}$, we have
\[
v_p(S_n(m)) \begin{cases} =v_p(S_{p-1}(m))=V_p(m)-1\quad \text{ if $n$ is even},\\
                                                 \geq V_p(m)\qquad\qquad\qquad\qquad\, \quad\text{ if $n$ is odd.}
\end{cases}
\]
\end{reptheorem}

As a result, we prove:

\begin{reptheorem}{thm: EMresults}
Let $p$ be an odd prime. \\
{\rm(i).} In the generalized Erd\H{o}s--Moser equation, if $p\mid m+1$, then $p-1 \nmid n$.\\
{\rm(ii).} In the Erd\H{o}s--Moser equation, if $p\mid m$, then $p-1 \mid n$ and $p^2\mid m+p$. Also, if $p\mid m- \frac{p-1}{2}$, then $p-1 \mid n$ and $m \equiv   -(p+\frac{1}{2}) \pmod{p^2}$.
\end{reptheorem}

Next we consider Egyptian fraction equations of the following form. For a given positive integer $n$, we seek an integer $d$ so that the congruence
\begin{align} \label{EQ:coolNums1}
\sum_{p \mid n} \frac{1}{p}+\frac{d}{n} \equiv 1 \pmod{1}
\end{align}
holds. Integers $n$ for which $d \equiv \pm 1 \pmod{n}$ are closely related to Giuga numbers and primary pseudoperfect numbers. Moreover, $d$ as a function of $n$ can be seen as an arithmetic derivative of $n$ and is related to the arithmetic derivative considered in \cite{Barbeau, Churchill, UfnarovskiAhlander}. By studying these equations, we prove:

\begin{reptheorem}{thm: Rabbit}
Let $(m,n)$ be a nontrivial solution to the Erd\H{o}s--Moser equation.\\
{\rm(i).} The pair $(n,d)=(m-1,2^n-1-X)$ satisfies congruence \eqref{EQ:coolNums1}, where$$X:=\sum_{\substack{p \mid m-1,\\p-1 \nmid n}} \frac{m-1}{p}.$$\\
{\rm(ii).} If $p\mid m-1$, then $n=p-1+k\cdot \ord_p(2)$ for some $k\geq 0$.\\
{\rm(iii).} Given $p\mid m-1$, if $p^e \mid m-1$ with $e \geq 1$, then $p^{e-1} \mid 2^n-1$.\\
{\rm(iv).} Given $p\mid m-1$, if $p-1 \mid n$ and $p^{e} \mid 2^n-1$ with $e \geq 1$, then $p^{e+1} \mid m-1 $; in particular, $p^2 \mid m-1$.
\end{reptheorem}

As an application, combining this with the result of \cite{MoreeOnMoser} that $3^5 \mid n$, we see that if $(m,n)$ is a solution of the Erd\H{o}s--Moser equation with $m \equiv 1 \pmod{3}$, then in fact $m \equiv 1 \pmod{3^7}$.

\section{Power Sums}

In all the formulas of this paper, \emph{the letter $p$ denotes a prime number}, unless ``integer $p$'' is specified.

For $m\in\N$ and $n\in\Z$, define the \emph{power sum}
\[S_n(m):=\sum_{j=1}^{m}j^n = 1^n+2^n+\dotsb +m^n\]
and set $S_n(0):=0$. 
Fixing a prime $p$, we define the \emph{restricted power sum} $S^*_{n}(0):=0$ and
\[S^*_{n}(m)=S^*_{n}(m,p):=\sum_{\substack{j=1,\\ (j,p)=1}}^{m}j^n,\]
obtained from $S_n(m)$ by removing the terms $j^n$ with $j$ divisible by~$p$. (Compare \cite[equation (2.1)]{EL}.) For example,
$S_{n}(p)-p^n=S_{n}(p-1)=S^*_{n}(p-1)=S^*_{n}(p)$
and, by induction on $d \in \N$,
\begin{align} \label{EQ:S*}
S_n(p^d)=S^*_{n}(p^d)+p^n S^*_{n}(p^{d-1})+p^{2n}S^*_{n}(p^{d-2})+\dotsb+p^{dn} S^*_{n}(p^0).
\end{align}

We now prove the linearity of certain restricted and unrestricted power sums upon reduction modulo prime powers.

\begin{lemma} \label{LEM:linear}
If $p$ is a prime, $d,q \in \N$, $N \in \Z$,  $m_1\in p^d \N\cup\{0\}$, and $m_2 \in \N\cup\{0\}$, then
\begin{align*}
S^*_{n}(qm_1+m_2) \equiv q S^*_{n}(m_1)+S^*_{n}(m_2) \pmod{p^d}.
\end{align*}
Furthermore, the congruence also holds with all $S_{n}^*$ replaced by the unrestricted sum $S_{n}.$
\end{lemma}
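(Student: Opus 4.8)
The plan is to exploit the fact that $m_1$ is a multiple of $p^d$, so that shifting the summation index by a multiple of $m_1$ leaves each summand unchanged modulo $p^d$. First I would dispose of the degenerate case $m_1=0$: then $qm_1+m_2=m_2$ and $S^*_n(m_1)=S^*_n(0)=0$, so the asserted congruence reads $S^*_n(m_2)\equiv S^*_n(m_2)$, which is trivial. So assume $m_1=p^d t$ with $t\in\N$.

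Next I would split the range of summation $\{1,2,\dots,qm_1+m_2\}$ into $q$ consecutive blocks of length $m_1$ together with a final block of length $m_2$, writing $j=km_1+i$ in the $k$-th block (for $k=0,1,\dots,q-1$, with $i$ running from $1$ to $m_1$) and $j=qm_1+i$ in the last block (with $i$ from $1$ to $m_2$). The crucial observation is that $p^d\mid m_1$ forces $km_1+i\equiv i\pmod{p^d}$, hence also $km_1+i\equiv i\pmod{p}$. Consequently the coprimality constraint $(km_1+i,\,p)=1$ defining the restricted sum is equivalent to $(i,p)=1$, and for every index $i$ coprime to $p$ the summand satisfies $(km_1+i)^n\equiv i^n\pmod{p^d}$. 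Since $i$ is then a $p$-adic unit, this holds for every $n\in\Z$; for the unrestricted sum with $n\geq 0$ the congruence $(km_1+i)^n\equiv i^n$ holds for all $i$ by the same reduction.

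Carrying out the reduction block by block, each of the $q$ full blocks then contributes $\sum_{(i,p)=1}(km_1+i)^n\equiv\sum_{(i,p)=1}i^n=S^*_n(m_1)\pmod{p^d}$, independently of $k$, and the final block contributes $S^*_n(m_2)$ in the same way. Summing the $q$ identical block contributions together with the final one yields $S^*_n(qm_1+m_2)\equiv qS^*_n(m_1)+S^*_n(m_2)\pmod{p^d}$, as desired. The unrestricted statement follows verbatim upon deleting the condition $(\cdot,p)=1$ throughout.

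I do not anticipate a genuine obstacle here; the content is just the periodicity $km_1\equiv 0\pmod{p^d}$ together with the elementary implication $a\equiv b\pmod{p^d}\Rightarrow a^n\equiv b^n\pmod{p^d}$. The only points demanding a little care are the bookkeeping of the block decomposition (ensuring the coprimality condition transforms correctly under the index shift $j=km_1+i$) and, if one insists on allowing negative $n$, the observation that restricting to $j$ coprime to $p$ keeps every summand a $p$-adic unit, so that the congruence remains meaningful.
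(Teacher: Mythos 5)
Your proposal is correct and follows essentially the same route as the paper's proof: decompose the summation range into blocks whose length is a multiple of $p^d$, note that the shift $j = km_1 + i$ preserves both the coprimality condition and the summand modulo $p^d$, and add up the block contributions. The only (cosmetic) difference is that you use $q$ blocks of length $m_1$ directly, while the paper first proves the case $m_1 = p^d$ via blocks of length $p^d$ and then splits off the tail of length $m_2$; your explicit handling of $m_1=0$ and of negative $n$ (where restricted summands are $p$-adic units) is a welcome touch of care that the paper leaves implicit.
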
 

\begin{proof}
Note first that
\begin{align*}
S^*_{n}(qp^d) &= \sum_{\substack{k=1,\\ (k,p)=1}}^{p^dq} k^{n} = \sum_{j=0}^{q-1} \sum_{\substack{k=1,\\ (k,p)=1}}^{p^d} (p^d j+k)^{n} 
 \equiv  q  \sum_{\substack{k=1,\\ (k,p)=1}}^{p^d} k^{n} \equiv qS^*_{n}(p^d)\pmod{p^d}.
\end{align*}
Since $m_1\in p^d \N\cup\{0\}$, we therefore have
\begin{align*}
 S^*_{n}(qm_1+m_2) =\sum_{\substack{k=1,\\ (k,p)=1}}^{qm_1+m_2}k^n &= \sum_{\substack{k=1,\\ (k,p)=1}}^{qm_1}k^n+\sum_{\substack{j=1,\\ (j,p)=1}}^{m_2}(qm_1+j)^n \\
& \equiv  q S^*_{n}(m_1)+S^*_{n}(m_2) \pmod{p^d},
\end{align*}
as desired. The proof in the unrestricted case is similar.
\qedhere
\end{proof}

\begin{theorem} \label{THM:PowerSumporder}
Let $p$ be an odd prime, and assume $d,q\in\N$ and $n \in \Z$. Then
\[
S^*_{n}(p^dq) \equiv\begin{cases}-p^{d-1}q \pmod{p^d} \, \quad\text{ if }p-1\mid n,\\
                                        \ 0\quad\quad\ \pmod{p^d}\quad \text{ if }p-1\nmid n.
                                         \end{cases}
\]
\end{theorem}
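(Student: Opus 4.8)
The plan is to reduce to the case $q=1$ and then recognize the resulting sum as a power sum over the unit group of $\Z/p^d\Z$. Applying Lemma~\ref{LEM:linear} with $m_1=p^d$ and $m_2=0$ (so that $qm_1+m_2=p^dq$), I get $S^*_{n}(p^dq)\equiv qS^*_{n}(p^d)\pmod{p^d}$, so it suffices to establish the two congruences for $q=1$, the general case then following by multiplying through by $q$. Next, the integers $j$ with $1\le j\le p^d$ and $(j,p)=1$ form a complete set of representatives for the units of $\Z/p^d\Z$, and $j^n$ depends only on $j\bmod p^d$; hence
\[
S^*_{n}(p^d)\equiv T:=\sum_{u\in(\Z/p^d\Z)^{*}}u^{n}\pmod{p^d}.
\]

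To evaluate $T$, I would fix a primitive root $g$ modulo $p^d$, so that $(\Z/p^d\Z)^{*}$ is cyclic of order $N:=p^{d-1}(p-1)$ and $T\equiv\sum_{k=0}^{N-1}g^{nk}\pmod{p^d}$. The dichotomy in the statement reflects whether $g^n\equiv1\pmod p$: since $g$ reduces to a primitive root modulo $p$, we have $g^n\equiv1\pmod p$ if and only if $(p-1)\mid n$. In the case $(p-1)\nmid n$, the element $g^n-1$ is a unit modulo $p^d$, and the telescoping identity $(g^n-1)T\equiv g^{nN}-1=(g^{N})^{n}-1\equiv0\pmod{p^d}$ forces $T\equiv0\pmod{p^d}$, which is the second case.

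For the case $(p-1)\mid n$, I would use the internal direct product $(\Z/p^d\Z)^{*}=\langle g_1\rangle\times\langle g_2\rangle$, where $g_1:=g^{p^{d-1}}$ has order $p-1$ and $g_2:=g^{p-1}$ has order $p^{d-1}$ (this is the Sylow decomposition, legitimate since $\gcd(p-1,p^{d-1})=1$). Writing each unit uniquely as $g_1^{i}g_2^{j}$ factors the sum as $T\equiv A\cdot B$ with $A:=\sum_{i=0}^{p-2}g_1^{in}$ and $B:=\sum_{j=0}^{p^{d-1}-1}g_2^{jn}$. Since $g_1$ has order $p-1$ and $(p-1)\mid n$, every term of $A$ is $\equiv1$, so $A\equiv p-1\pmod{p^d}$. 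Setting $c:=g_2^{n}\equiv1\pmod p$ and expanding $B=\sum_{j}(1+(c-1))^{j}$ by the binomial theorem together with the hockey-stick identity $\sum_{j=0}^{p^{d-1}-1}\binom{j}{i}=\binom{p^{d-1}}{i+1}$, I would show that the leading term $\binom{p^{d-1}}{1}=p^{d-1}$ survives while every higher term vanishes modulo $p^d$, giving $B\equiv p^{d-1}\pmod{p^d}$; consequently $T\equiv(p-1)p^{d-1}\equiv-p^{d-1}\pmod{p^d}$, the first case.

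The main obstacle is precisely this last valuation estimate. It amounts to checking that $v_p\!\left(\binom{p^{d-1}}{i+1}(c-1)^{i}\right)\ge d$ for every $i\ge1$, which reduces, via the identity $v_p\binom{p^{d-1}}{i+1}=(d-1)-v_p(i+1)$ and the bound $v_p(c-1)\ge1$, to the inequality $v_p(i+1)\le i-1$. This inequality holds for all $i\ge1$ exactly because $p$ is odd (it already fails at $i=1$ for $p=2$, since $v_2(2)=1>0$), which is where the hypothesis that $p$ is an odd prime enters in an essential way.
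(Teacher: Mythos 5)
Your proof is correct, and it shares the paper's skeleton: both arguments reduce to $q=1$ via Lemma~\ref{LEM:linear}, pass to a primitive root $g$ of $p^d$, and split according to whether $g^n\equiv 1\pmod{p}$, disposing of the case $p-1\nmid n$ by the same observation that $g^n-1$ is then a unit killing the telescoped sum. Where you genuinely diverge is the case $p-1\mid n$. The paper writes $g^n=1+kp$ (Fermat) and evaluates the geometric series $\left((1+kp)^{\phi(p^d)}-1\right)/(kp)$ directly by the binomial theorem, using the estimate $v_p\!\left(\binom{\phi(p^d)}{j}(kp)^{j-1}\right)\ge d$ for $j\ge 2$. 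You instead factor $(\Z/p^d\Z)^{*}$ as the internal direct product of its subgroups of orders $p-1$ and $p^{d-1}$, so that $T=A\cdot B$ with $A\equiv p-1$ trivially, and then compute $B=\sum_{j}c^{j}$ with $c\equiv 1\pmod{p}$ via the hockey-stick identity and the estimate $v_p\!\left(\binom{p^{d-1}}{i+1}(c-1)^{i}\right)\ge d$ for $i\ge1$. These two valuation estimates are the same phenomenon, and in both proofs the oddness of $p$ enters at exactly the same spot, namely $v_p(2)=0$; so your route is a bit longer for this case. What it buys is uniformity: since your entire computation takes place in the group $(\Z/p^d\Z)^{*}$, it covers $n=0$ and $n<0$ with no additional work, whereas the paper treats $n=0$ separately and reduces $n<0$ to $n>0$ by noting that $g^{-1}$ is again a primitive root. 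Two small points you should write out in a final version: first, $c=g_2^{n}\equiv 1\pmod{p}$ because $g_2=g^{p-1}\equiv 1\pmod{p}$ by Fermat; second, the inequality $v_p(i+1)\le i-1$ holds for $i=1$ since $v_p(2)=0$, and for $i\ge 2$ since $p^{i-1}\ge 3^{i-1}\ge i+1$.
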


\begin{proof}
By Lemma \ref{LEM:linear}, it suffices to prove the theorem in the special case where $q=1$. Let $\phi(n)$ denote Euler's totient function.
Since$$S^*_{0}(p^d) = \phi(p^d) =p^{d-1}(p-1) \equiv -p^{d-1} \pmod{p^d}$$and $p-1\mid 0$, the result holds when $n=0$.

Now assume $n\neq0$. As $d>0$ and $p$ is an odd prime, $p^d$ has a primitive root~$g$. Then $g$ has multiplicative order $\phi(p^d)$ modulo $p^d$, and $g^n\neq1$. Hence $S^*_{n}(p^d)$ is congruent to
\begin{equation*}\label{EQ:primive root}
\begin{aligned}
S^*_{n}(p^d) &= \sum_{\substack{j=1,\\ (j,p)=1}}^{p^d}j^n \equiv  \sum_{i=0}^{\phi(p^d)-1}(g^i)^n 
&\equiv\sum_{i=0}^{\phi(p^d)-1}(g^n)^i  \equiv \frac{g^{n\phi(p^d)}-1}{g^n-1}\pmod{p^d}.
\end{aligned}
\end{equation*}

We now consider the case $n>0$.  If $p-1\mid n$, then Fermat's Little Theorem implies $g^n= 1+k p$, for some $k > 0$. Hence
\begin{align*}
S^*_{n}(p^d) 
 & \equiv \frac{(1+kp)^{\phi(p^d)}-1}{(1+kp)-1} \equiv \frac{\phi(p^d)kp+\binom{\phi(p^d)}{2}(kp)^2+\dotsb +(kp)^{\phi(p^d)}}{kp} \equiv \phi(p^d)\pmod{p^d}.
\end{align*}
Thus $S^*_{n}(p^d) \equiv -p^{d-1} \pmod{p^d}$, as desired. This proves the result when $p-1 \mid n$.

If $p-1\nmid n$, then a fortiori $\phi(p^d)\nmid n$, and so $g^n\not\equiv 1\pmod{p}$. As $g^{\phi(p^d)}\equiv 1\pmod{p^d}$,
\begin{align*}
S^*_{n}(p^d) \equiv \frac{g^{n\phi(p^d)}-1}{g^n-1}\equiv 0\pmod{p^d}.
\end{align*}
This proves the result when $p-1 \nmid n$, and the proof of the case $n>0$ is complete.

The case $n < 0$ follows, because another primitive root of $p^d$ is $g^{ \phi(p^d)-1} \equiv g^{-1} \pmod{p^d}$, and so $S^*_{n}(p^d) \equiv S^*_{- n}(p^d)\pmod{p^d}.$ This completes the proof of the theorem.
\end{proof}

In the following application of Theorem \ref{THM:PowerSumporder}, the case $d=q=1$ is classical. (For a recent elementary proof of that case, as well as a survey of other proofs and applications of it, see~\cite{MS1}.)

\begin{corollary} \label{PowerSumporder}
Let $p$ be an odd prime and $n,d,q \in \N$. Then
\[
1^n+2^n+\dotsb +(p^dq)^n \equiv \begin{cases}-p^{d-1}q \,\pmod{p^d}\quad \text{ if }p-1\mid n,\\
                                        \ 0\quad\quad\  \pmod{p^d} \quad\text{ if }p-1\nmid n.
                                         \end{cases}
\]
In particular, $S_n(p^dq)\equiv 0\pmod{p}$ if $d>1$ or $p>n+1$.
\end{corollary}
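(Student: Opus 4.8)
The plan is to reduce the unrestricted sum $S_n(p^dq)$ to the restricted sum $S^*_n(p^dq)$, which is already computed modulo $p^d$ by Theorem \ref{THM:PowerSumporder}, and then to control the difference between the two. Separating the terms $j^n$ with $p\mid j$ (writing $j=pk$ with $1\le k\le p^{d-1}q$, so that they contribute $\sum_k(pk)^n=p^nS_n(p^{d-1}q)$) from the terms with $p\nmid j$ (which sum to $S^*_n(p^dq)$) yields the recursion
\[
S_n(p^dq)=S^*_n(p^dq)+p^nS_n(p^{d-1}q),
\]
an inhomogeneous analogue of \eqref{EQ:S*}. By Theorem \ref{THM:PowerSumporder} the first summand is congruent modulo $p^d$ to precisely the residue claimed in the corollary, so the whole task is to show that the correction term $p^nS_n(p^{d-1}q)$ vanishes modulo $p^d$.

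I would prove this by induction on $d$, establishing the full congruence of the corollary at each stage. The base case $d=1$ is immediate: since $n\ge 1$ we have $p^nS_n(q)\equiv 0\pmod p$, so $S_n(pq)\equiv S^*_n(pq)\pmod p$, and Theorem \ref{THM:PowerSumporder} supplies the value $-q$ or $0$. For the inductive step with $d\ge 2$, the hypothesis applied to $d-1$ gives the residue of $S_n(p^{d-1}q)$ modulo $p^{d-1}$, from which one reads off $v_p\big(S_n(p^{d-1}q)\big)\ge d-2$ in the case $p-1\mid n$ and $v_p\big(S_n(p^{d-1}q)\big)\ge d-1$ in the case $p-1\nmid n$. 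Multiplying by $p^n$ then gives $v_p\big(p^nS_n(p^{d-1}q)\big)\ge n+(d-2)$ and $\ge n+(d-1)$ respectively, so the correction term drops out mod $p^d$ and Theorem \ref{THM:PowerSumporder} closes the step.

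The one point needing care, and the \emph{only} place the hypothesis that $p$ is odd is used, is the case $p-1\mid n$, where the available valuation bound is the weaker $\ge d-2$: here one needs $n\ge 2$ in order to conclude $n+(d-2)\ge d$. This is exactly where oddness enters, since $p-1\mid n$ with $n\ge 1$ forces $n\ge p-1\ge 2$ (because $p-1\nmid 1$ once $p>2$); for $p=2$ the statement genuinely fails. I expect this to be the main obstacle, in the sense that it is the sole nonroutine observation, the remainder being bookkeeping of $p$-adic orders. In the case $p-1\nmid n$ no lower bound on $n$ is required, as $n\ge 1$ already suffices.

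Finally, the two ``in particular'' assertions follow by reducing the established congruence modulo $p$. If $p>n+1$, then $1\le n\le p-2$ forces $p-1\nmid n$, whence $S_n(p^dq)\equiv 0\pmod{p^d}$ and a fortiori modulo $p$. If instead $d>1$, then even in the case $p-1\mid n$ the residue $-p^{d-1}q$ is divisible by $p$ because $d-1\ge 1$, so again $S_n(p^dq)\equiv 0\pmod p$.
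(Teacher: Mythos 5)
Your proof is correct and follows essentially the same route as the paper: both express the unrestricted sum in terms of restricted sums and invoke Theorem \ref{THM:PowerSumporder}, your one-step recursion $S_n(p^dq)=S^*_n(p^dq)+p^nS_n(p^{d-1}q)$ being precisely the inductive step by which the paper obtains its closed-form expansion \eqref{EQ:S*}, which it then estimates term by term. The only organizational differences are that you carry the general $q$ through rather than first reducing to $q=1$ via Lemma \ref{LEM:linear}, and that you replace the paper's explicit $n=1$ special case by the observation that $p-1\mid n$ with $p$ odd forces $n\ge 2$ --- which is arguably a cleaner way of isolating exactly where oddness is used.
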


\begin{proof}
By the linearity of Lemma \ref{LEM:linear}, it suffices to prove the result when $q=1$. In case $n=1$, then $p-1\nmid n$ and $S_n(p^d)= p^d(p^d+1)/2 \equiv 0 \pmod{p^{d}}$, verifying this case. For $n>1$, we reduce both sides of equation \eqref{EQ:S*} modulo $p^d$, then apply Theorem~\ref{THM:PowerSumporder} to each term on the right-hand side, obtaining
$S_n(p^d) \equiv S^*_{n}(p^d)\pmod{p^d}.$ The corollary follows.
\end{proof}

For example, taking $q=p$ gives$$S_{n}(p^{d+1})   \equiv 0 \pmod{p^d},$$whether or not $p-1$ divides $n$. For instance, $S_{2}(9) =285 \equiv 0\pmod{3}$ and $S_{1}(9) = 45 \equiv 0\pmod{3}$.

On the other hand, taking $q=1$ and replacing $d$ with $d+1$ in Corollary~\ref{PowerSumporder} gives$$p-1\mid n \implies S_{n}(p^{d+1})  \not\equiv0\pmod{p^{d+1}}.$$ For example, $S_{2}(9) =285\not \equiv 0\pmod{9}$.

\begin{corollary} \label{NegExpPowerSumporder}
For $n \in \N$,
\[
\text{prime } p\ge n+2 \implies \frac{1}{1^n}+\frac{1}{2^n} +\dotsb+\frac{1}{(p-1)^n} \equiv 0 \pmod{p}.
\]
\end{corollary}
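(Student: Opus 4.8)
The plan is to reduce this statement about sums of inverse powers to the power-sum result already in hand. First I would note that for $1\le j\le p-1$ we have $(j,p)=1$, so each $1/j^n$ is a $p$-adic integer whose reduction modulo $p$ is the multiplicative inverse of $j^n$ in $(\Z/p\Z)^\times$. Thus the left-hand side is well defined modulo $p$, and writing $j^{-1}$ for the inverse of $j$ mod $p$ it equals $\sum_{j=1}^{p-1}(j^{-1})^n \pmod p$.

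The key step is then a reindexing. Since inversion $j\mapsto j^{-1}$ is a bijection of $(\Z/p\Z)^\times=\{1,2,\dots,p-1\}$ onto itself, substituting $k=j^{-1}$ gives
\[
\sum_{j=1}^{p-1}\frac{1}{j^n}\equiv\sum_{j=1}^{p-1}(j^{-1})^n\equiv\sum_{k=1}^{p-1}k^n=S_n(p-1)\pmod p.
\]
So it suffices to prove $S_n(p-1)\equiv0\pmod p$.

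For this I would invoke Theorem~\ref{THM:PowerSumporder} (equivalently Corollary~\ref{PowerSumporder}) with $d=q=1$. The hypothesis $p\ge n+2$ gives $n\le p-2<p-1$, so the positive integer $n$ lies strictly between $0$ and $p-1$ and hence $p-1\nmid n$. Since the term $j=p$ is divisible by $p$ and therefore absent from the restricted sum, we have $S_n(p-1)=S^*_n(p)$, and the theorem in the case $p-1\nmid n$ yields $S^*_n(p)\equiv0\pmod p$, which completes the argument.

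Alternatively, one can bypass the bijection and apply Theorem~\ref{THM:PowerSumporder} directly with the negative exponent $-n$ and $d=q=1$: its case $n<0$ covers $S^*_{-n}(p)=\sum_{j=1}^{p-1}1/j^n$, and since $p-1\nmid n\iff p-1\nmid -n$, the same conclusion follows immediately. There is no serious obstacle here; the only points requiring care are the translation of the formal reciprocals $1/j^n$ into genuine modular inverses and the verification that $p-1\nmid n$ under the hypothesis $p\ge n+2$.
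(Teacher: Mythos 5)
Your proof is correct and is essentially the paper's: the paper's own one-line argument is exactly your ``alternative'' route, writing the sum as $S_{-n}(p-1)=S^*_{-n}(p)$ and applying Theorem~\ref{THM:PowerSumporder} with exponent $-n$ and $d=q=1$, the hypothesis $p\ge n+2$ guaranteeing $p-1\nmid n$. Your primary route---reindexing via the inversion bijection $j\mapsto j^{-1}$ to land in the positive-exponent case---is only a cosmetic variant, since the theorem's $n<0$ case is itself proved by that same inversion symmetry (namely that $g^{-1}$ is again a primitive root).
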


\begin{proof}
The sum is $S_{-n}(p-1) = S_{-n}^*(p)$, and the formula follows from Theorem~\ref{THM:PowerSumporder} by replacing $n$ with $-n$ and setting $d=q=1$.
\end{proof}

Taking $n=1$, the congruence $S_{-1}(p-1) \equiv 0$ actually holds modulo $p^2$, if $p\ge5$, by {\em Wolstenholme's theorem} \cite{Wo,Me}.

The following theorem provides us with additional information about the divisibility of power sums.

\begin{proposition} \label{powerSumDivisibilitySharpened} Given integers $p,q\ge1, n \geq 0$, and $d \geq c \geq 0$, set $\delta=d-c$. Then
\[
S_n(p^dq)=p^{\delta}qS_n(p^c)+\sum_{k=1}^n \binom{n}{k} p^{ck} ( S_k(p^{\delta}q)-(p^{\delta}q)^k) S_{n-k}(p^c).
\]
\end{proposition}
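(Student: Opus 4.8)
The plan is to prove the identity by a direct combinatorial expansion of the power sum, grouping the summation index according to its residue modulo $p^c$. First I would observe that since $d=\delta+c$ we may factor $p^dq=(p^\delta q)(p^c)$; writing $M:=p^\delta q$ and $L:=p^c$, the goal becomes an identity for $S_n(ML)$. Every integer $j$ with $1\le j\le ML$ can be written uniquely as $j=tL+s$ with $0\le t\le M-1$ and $1\le s\le L$, so that
\[
S_n(ML)=\sum_{t=0}^{M-1}\sum_{s=1}^{L}(tL+s)^n.
\]

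Next I would apply the binomial theorem to $(tL+s)^n=\sum_{k=0}^n\binom{n}{k}(tL)^k s^{n-k}$ and interchange the order of summation to separate the $t$- and $s$-dependence, obtaining
\[
S_n(ML)=\sum_{k=0}^n\binom{n}{k}L^k\Bigl(\sum_{t=0}^{M-1}t^k\Bigr)\Bigl(\sum_{s=1}^{L}s^{n-k}\Bigr).
\]
The inner sum over $s$ is exactly $S_{n-k}(L)=S_{n-k}(p^c)$, while the inner sum over $t$ equals $M=p^\delta q$ when $k=0$ (using $t^0=1$) and equals $S_k(M-1)$ when $k\ge 1$ (the $t=0$ term contributing nothing).

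I would then isolate the $k=0$ term, which produces $\binom{n}{0}L^0\cdot M\cdot S_n(L)=p^\delta q\,S_n(p^c)$, matching the leading term of the claimed formula. For the remaining terms $k\ge 1$, I would rewrite $S_k(M-1)=S_k(M)-M^k=S_k(p^\delta q)-(p^\delta q)^k$ and substitute $L^k=p^{ck}$, giving precisely
\[
\sum_{k=1}^n\binom{n}{k}p^{ck}\bigl(S_k(p^\delta q)-(p^\delta q)^k\bigr)S_{n-k}(p^c),
\]
as required.

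This argument is essentially bookkeeping and presents no serious obstacle; the only points demanding care are the separate treatment of $k=0$, where the $t$-sum equals $M$ rather than an $S_k$ value, and the conversion $S_k(M-1)=S_k(M)-M^k$, which is exactly what lets the answer be expressed in terms of $S_k$ evaluated at $p^\delta q$ instead of at $p^\delta q-1$. Note also that no primality of $p$ is used, and the degenerate cases $n=0$ (empty sum) and $c=0$ (so $L=1$, forcing $s=1$) are handled automatically by the convention $S_0(m)=m$ and the empty-sum convention.
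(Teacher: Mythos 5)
Your proof is correct and is essentially the same as the paper's: the authors use the identical decomposition $j = jp^c + i$ with the binomial expansion, isolate the $k=0$ term to get $p^{\delta}q\,S_n(p^c)$, and invoke the same identity $\sum_{j=0}^{p^{\delta}q-1} j^k = S_k(p^{\delta}q)-(p^{\delta}q)^k$. The only cosmetic difference is that the paper keeps $i^n$ separate in the expansion (avoiding any $0^0$ convention), whereas you include $k=0$ in the binomial sum and evaluate $\sum_t t^0 = M$ directly.
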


\begin{proof}
We have
\begin{align*}
S_n(p^dq)&=\sum_{j=0}^{p^{\delta}q-1} \sum_{i=1}^{p^c} (jp^c+i)^n= \sum_{j=0}^{p^{\delta}q-1} \sum_{i=1}^{p^c} \left(i^n+\sum_{k=1}^{n}\binom{n}{k} (j p^c)^k i^{n-k}\right)\\
&= \sum_{j=0}^{p^{\delta}q-1} \sum_{i=1}^{p^c} i^{n}+\sum_{k=1}^{n}\binom{n}{k} p^{ck}\sum_{j=0}^{p^{\delta}q-1}j^k \sum_{i=1}^{p^c}   i^{n-k}.
\end{align*}
Using $\sum_{j=0}^{p^{\delta}q-1}j^k =  S_k(p^{\delta}q)-(p^{\delta }q)^k$, the desired formula follows.
\end{proof}

\begin{corollary} \label{Snp2 mod p3} For any prime $p \geq 5$ and integer $n \geq 0$, the following congruence holds:
\[
S_n(p^2) \equiv p S_n(p)+pnS_{n-1}(p)(S_1(p)-p) \pmod{p^3}.
\]
\end{corollary}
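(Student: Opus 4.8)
The plan is to specialize Proposition~\ref{powerSumDivisibilitySharpened} to the case $d=2$, $c=1$, $q=1$, so that $\delta = d-c = 1$, and then control the error terms modulo $p^3$ using Corollary~\ref{PowerSumporder}. With these choices the proposition reads
\[
S_n(p^2) = p\,S_n(p) + \sum_{k=1}^{n}\binom{n}{k} p^{k}\bigl(S_k(p) - p^k\bigr) S_{n-k}(p),
\]
so what remains is to argue that every term in the sum with $k \geq 2$ vanishes modulo $p^3$, leaving only the $k=1$ term, which is exactly $p\,n\,(S_1(p)-p)\,S_{n-1}(p)$ as claimed.

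First I would isolate the $k=1$ summand, which contributes $\binom{n}{1} p^1 (S_1(p)-p) S_{n-1}(p) = p\,n\,S_{n-1}(p)(S_1(p)-p)$, matching the stated right-hand side. Then for each $k \geq 2$ I would show the factor $p^k \geq p^2$ together with at least one more factor of $p$ coming from $S_k(p)-p^k$ forces divisibility by $p^3$. The key observation is that $S_k(p) - p^k = S_k(p-1) = S_k^*(p)$, and by Theorem~\ref{THM:PowerSumporder} (with $d=q=1$) we have $S_k^*(p) \equiv 0 \pmod{p}$ whenever $p-1 \nmid k$. So for those $k$ the summand is divisible by $p^k \cdot p \geq p^3$ and drops out.

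The main obstacle is handling the exceptional indices $k$ with $p-1 \mid k$, where $S_k(p)-p^k \equiv -1 \pmod p$ rather than $0$, so the extra factor of $p$ is not available from that source. Here I would use the hypothesis $p \geq 5$, which guarantees $p - 1 \geq 4 > 2$, so the smallest positive $k$ with $p-1\mid k$ is $k = p-1 \geq 4 \geq 3$; hence the bare factor $p^k \geq p^3$ already secures divisibility by $p^3$ for every such $k$ in the range $2 \leq k \leq n$. (For $k=2,3$ one has $p-1 \nmid k$ precisely because $p-1 \geq 4$, so those indices fall under the previous case.) Combining the two cases shows the entire $k\geq 2$ portion of the sum is $\equiv 0 \pmod{p^3}$, and the congruence follows.

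I expect the only subtlety to be bookkeeping the two regimes cleanly—confirming that the threshold $p\geq 5$ is exactly what forces the first exceptional index $p-1$ past $2$—rather than any deep computation; the rest is a direct substitution into the already-proved proposition and term-by-term estimation via Theorem~\ref{THM:PowerSumporder}.
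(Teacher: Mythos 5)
Your proof is correct and follows essentially the paper's own route: both specialize Proposition~\ref{powerSumDivisibilitySharpened} to $d=2$, $c=1$, $q=1$ and then show every $k\ge 2$ term of the sum vanishes modulo $p^3$, with the hypothesis $p\ge 5$ entering in exactly the same way (namely $p-1\nmid 2$, so $S_2(p)\equiv 0\pmod p$). The only cosmetic difference is bookkeeping: the paper discards all $k\ge 3$ terms outright via the factor $p^k$ and needs the extra factor of $p$ only at $k=2$ (checking $n=0,1$ separately to avoid the degenerate terms), whereas you run a uniform case split on whether $p-1\mid k$ --- the same inputs, arranged slightly differently.
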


\begin{proof}
If $n=0$ or $1$, it is easy to verify the congruence. Now, assume that $n \geq 2$ and set $d=2$ and $c=1$ in Theorem \ref{powerSumDivisibilitySharpened}. Then 
\begin{align*}
S_n(p^2)&=p S_n(p)+\sum_{k=1}^n \binom{n}{k} p^{k} ( S_k(p)-p^{ k}) S_{n-k}(p)\\
&\equiv p S_n{p}+np(S_1(p)-p) S_{n-1}(p)+ \binom{n}{2}p^2 (S_2(p)-p^2) S_{n-2}(p)  \pmod{p^3}.
\end{align*}
Since $p \geq 5$ implies that $S_2(p)$ is divisible by $p$, the proof is complete.
\end{proof}

Corollary \ref{Snp2 mod p3} fails with $p=3$. Indeed, take $n=1$. Then $S_n(p^2)=S_1(9)=45$, whereas $pS_n(p)+p n S_{n-1}(p)(S_1(p)-p) $ equals
\begin{align*}
 3S_1(3)+3 S_{2}(3)(S_1(3)-3)=18+3 \cdot 14(6-3)=144\not\equiv 45 \pmod{3^3}.
\end{align*}

Recall that {\em Pascal's identity} is
\begin{equation}
\sum_{k=0}^{n-1} \binom{n}{k} S_{k}(a)=(a+1)^{n}-1, \label{EQ:Pascal}
\end{equation}
valid for $a \ge0$ and $n\ge1$ (see, e.g., \cite{MS1}). Here is an analog for even exponents.

\begin{theorem}[A Pascal identity for even exponents] \label{THM:evenPascal} For any integer $a \ge0$ and even $n\ge2$,
\[
\sum_{k=0}^{(n-2)/2} \binom{n}{2k} S_{2k}(a)=\frac12\left((a+1)^{n}-(a^{n}+1)\right).
\]
\end{theorem}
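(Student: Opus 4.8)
The plan is to extract the even-indexed terms of the full weighted sum $\sum_{k=0}^{n}\binom{n}{k}S_k(a)$ by combining it with its alternating counterpart, using the elementary fact that the even part of a sequence $(c_k)$ equals $\tfrac12\bigl(\sum_k c_k+\sum_k(-1)^k c_k\bigr)$. Concretely, taking $c_k=\binom{n}{k}S_k(a)$, I would aim to establish
\[
\sum_{k=0}^{n/2}\binom{n}{2k}S_{2k}(a)=\tfrac12\Bigl(\sum_{k=0}^n\binom{n}{k}S_k(a)+\sum_{k=0}^n(-1)^k\binom{n}{k}S_k(a)\Bigr),
\]
where the left side runs all the way up to $2k=n$. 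The stated identity will then follow after isolating and cancelling the top term $\binom{n}{n}S_n(a)=S_n(a)$, which is present in the full even part but absent from the theorem.

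First I would evaluate the plain sum. By Pascal's identity \eqref{EQ:Pascal} we have $\sum_{k=0}^{n-1}\binom{n}{k}S_k(a)=(a+1)^n-1$, so adding back the missing $k=n$ term gives $\sum_{k=0}^{n}\binom{n}{k}S_k(a)=(a+1)^n-1+S_n(a)$.

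The key step, and the main obstacle, is evaluating the alternating sum. Here I would interchange the order of summation and recognize the binomial theorem:
\[
\sum_{k=0}^n(-1)^k\binom{n}{k}S_k(a)=\sum_{j=1}^a\sum_{k=0}^n\binom{n}{k}(-j)^k=\sum_{j=1}^a(1-j)^n.
\]
Reindexing by $i=j-1$, and using that $n$ is even so that $(-i)^n=i^n$, the right-hand side collapses to $\sum_{i=0}^{a-1}i^n=S_n(a-1)=S_n(a)-a^n$. This swap-and-collapse move is the crux; everything else is bookkeeping, and the pleasant feature is that no independent evaluation of $S_n(a)$ will be needed.

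Finally I would combine the two pieces: the half-sum becomes $\tfrac12\bigl((a+1)^n-1+S_n(a)+S_n(a)-a^n\bigr)=\tfrac12\bigl((a+1)^n-a^n-1\bigr)+S_n(a)$. Subtracting the top term $S_n(a)$ from both sides then yields exactly $\sum_{k=0}^{(n-2)/2}\binom{n}{2k}S_{2k}(a)=\tfrac12\bigl((a+1)^n-a^n-1\bigr)$, as desired. The only point requiring care is the careful tracking of the $k=n$ term through the even-part decomposition, but the two copies of $S_n(a)$ cancel cleanly.
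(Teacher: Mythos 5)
Your proof is correct. The plain sum $\sum_{k=0}^{n}\binom{n}{k}S_k(a)=(a+1)^n-1+S_n(a)$ follows from \eqref{EQ:Pascal}; the interchange giving $\sum_{k=0}^n(-1)^k\binom{n}{k}S_k(a)=\sum_{j=1}^a(1-j)^n=S_n(a)-a^n$ is valid since $n$ is even; and the two copies of $S_n(a)$ cancel exactly as you say, leaving the stated identity. (One pedantic caveat: at $a=0$ the intermediate expression $S_n(a-1)$ should be read as the empty sum $0$ rather than $S_n(-1)$; the formula $S_n(a)-a^n$ you actually use is fine there.) Your route is organized differently from the paper's, though the engine is the same parity-averaging trick. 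The paper never invokes \eqref{EQ:Pascal}: it computes $S_n(a+1)+S_n(a-1)-1=\sum_{j=1}^{a}\left((1+j)^n+(1-j)^n\right)$ and kills the odd-degree terms by pairing $j^k$ with $(-j)^k$ inside each binomial expansion, then rewrites everything in terms of $S_n(a)$. In effect the paper symmetrizes over the inner index $j$ before summing over $k$, while you symmetrize over the outer index $k$ and only then collapse over $j$; the two computations are transposes of one another under interchange of summation (your alternating sum, once the order of summation is swapped, is literally the paper's $\sum_j(1-j)^n$). What your version buys is modularity: the previously recorded identity \eqref{EQ:Pascal} does the work of evaluating $\sum_j(1+j)^n$, so the only new computation is the alternating sum. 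The paper's version is self-contained and slightly shorter, reaching $2S_n(a)+(a+1)^n-a^n-1=2\sum_{k=0}^{n/2}\binom{n}{2k}S_{2k}(a)$ in a single display before cancelling $2S_n(a)$ --- the same cancellation of the top term that closes your argument.
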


\begin{proof}
Since $n$ is even, the Binomial Theorem gives
\begin{align*}
S_n(a+1)+S_n(a-1)-1&=\sum_{j=1}^{a} \left((1+j)^n+(1-j)^n\right)= \sum_{j=1}^{a}\sum_{k=0}^{n} \binom{n}{k} j^k(1+(-1)^k)\\
&= 2 \sum_{j=1}^{a}\sum_{k=0}^{n/2}  \binom{n}{2k}j^{2k} = 2 \sum_{k=0}^{n/2}  \binom{n}{2k}\sum_{j=1}^{a}j^{2k}.
\end{align*}
Using $S_n(m)=\sum_{j=1}^m j^n=S_n(m-1)+m^n$, we can write this as
\[
2S_n(a)+(a+1)^n-a^n-1=2 \sum_{k=0}^{n/2} \binom{n}{2k} S_{2k}(a).
\]
As $n\ge2$, subtracting $2S_n(a)$ from both sides and then dividing by~$2$ yields the desired formula.
\end{proof}

For an application of Pascal's identity to Bernoulli numbers, see Section~\ref{Bernoulli numbers}.

\begin{theorem} \label{CongruenceOfPowerSums}
Let $p$ be an odd prime and let $m$ and $n$ be positive integers.\\
{\rm(i).} For some integer $d\ge1$, we can write
$$m=qp^d + r\, \frac{p^d-1}{p-1} =qp^d+rp^{d-1}+rp^{d-2}+\dotsb+rp^0,$$
where $r \in \{0,1,\dots,p-1\}$ and $0\le q \not \equiv r \equiv m\pmod{p}$.\\
{\rm(ii).}  In case $m \equiv 0 \pmod{p}$, we have
\[
S_n(m) \equiv \begin{cases}-p^{d-1}q \pmod{p^d} \ \quad \text{if }p-1\mid n,\\
                                       \  0 \quad\quad\, \pmod{p^d}\quad \text{ if }p-1\nmid n.
                                         \end{cases}
\]
{\rm(iii).} In case $m \equiv -1 \pmod{p}$, we have
\[
S_n(m) \equiv \begin{cases}-p^{d-1}(q+1) \pmod{p^d} \quad\text{ if }p-1\mid n,\\
                                         \ 0\qquad\quad\quad\ \ \pmod{p^d}\quad \text{ if }p-1\nmid n.
                                         \end{cases}
\]
{\rm(iv).} In case $m \equiv \frac{p-1}{2} \pmod{p}$, we have
\[
S_{n}(m) \equiv \begin{cases}  -p^{d-1}\!\left(q+\dfrac{1}{2}\right) \pmod{p^d}\quad  \text{ if }p-1\mid n,\\
                                        \ 0 \qquad\qquad\qquad\!\pmod{p^d} \quad\text{ if }p-1\nmid n \text{ and $n$ is even}.
                                         \end{cases}
\]
\end{theorem}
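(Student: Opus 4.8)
The plan is to prove (i) by a base-$p$ digit argument and then read off (ii)--(iv) by substituting the resulting explicit shape of $m$ into the congruences already established, Corollary \ref{PowerSumporder} in particular; the one case that needs a genuinely new idea is (iv), where I would use a reflection of the summation range.

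For (i), I would set $r:=m\bmod p\in\{0,1,\dots,p-1\}$ and take $d$ to be the number of equal trailing base-$p$ digits of $m$, i.e.\ $d=V_p(m)$. Since the positive integer $m$ has at least one digit we get $d\ge1$, and $d$ is finite: if $r\neq0$ the finitely many nonzero digits of $m$ eventually force a digit $\neq r$, while if $r=0$ we may write $m=p^d u$ with $p\nmid u$. Putting $q:=\lfloor m/p^d\rfloor$, the last $d$ digits of $m$ are all equal to $r$ and so contribute $r(1+p+\dots+p^{d-1})=r\frac{p^d-1}{p-1}$, while the digit $q\bmod p$ differs from $r$ by maximality of $d$. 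This yields $m=qp^d+r\frac{p^d-1}{p-1}$ with $0\le q\not\equiv r\equiv m\pmod p$, as required.

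For (ii) the choice $r=0$ gives $m=qp^d$, so $S_n(m)=S_n(qp^d)$ and the assertion is exactly Corollary \ref{PowerSumporder}. For (iii) the choice $r=p-1$ gives $m=qp^d+(p^d-1)=(q+1)p^d-1$, whence $S_n(m)=S_n((q+1)p^d)-((q+1)p^d)^n$. Since $n\ge1$ the subtracted term equals $(q+1)^np^{dn}$ and is divisible by $p^d$, so $S_n(m)\equiv S_n((q+1)p^d)\pmod{p^d}$, and Corollary \ref{PowerSumporder} with $q$ replaced by $q+1$ completes this case in both the $p-1\mid n$ and $p-1\nmid n$ branches.

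The interesting case is (iv): here $r=\frac{p-1}{2}$ gives $m=qp^d+\frac{p^d-1}{2}$, equivalently $2m+1=(2q+1)p^d$. I would relate $S_n(m)$ to $S_n(2m)$ via the reflection $j\mapsto 2m+1-j$. Because $2m+1\equiv0\pmod{p^d}$, the upper half satisfies $\sum_{j=m+1}^{2m}j^n=\sum_{i=1}^{m}(2m+1-i)^n\equiv(-1)^nS_n(m)\pmod{p^d}$, so that $S_n(2m)\equiv(1+(-1)^n)S_n(m)\pmod{p^d}$. For $n$ even this reads $S_n(2m)\equiv 2S_n(m)$, and since $p$ is odd $2$ is invertible modulo $p^d$, giving $S_n(m)\equiv 2^{-1}S_n(2m)$. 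Finally $S_n(2m)=S_n((2q+1)p^d-1)$ is handled exactly as in (iii), so $S_n(2m)\equiv -p^{d-1}(2q+1)\pmod{p^d}$ when $p-1\mid n$ and $S_n(2m)\equiv0$ when $p-1\nmid n$; multiplying by $2^{-1}$ produces $-p^{d-1}(q+\frac12)$ and $0$ respectively, which is the claim. The main obstacle is precisely this last case: the reflection identity only pins down $S_n(m)$ when $n$ is even (for odd $n$ it degenerates to $0\equiv0$), which explains the restriction to even exponents in (iv), and one must carry the inverse of $2$ modulo $p^d$ in order to make sense of the term $q+\frac12$.
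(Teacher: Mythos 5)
Your proposal is correct, and it rests on the same pillars as the paper's proof: the trailing-digit decomposition in (i), Corollary~\ref{PowerSumporder} as the computational engine, and a pairing (reflection) argument for the half-range case (iv). Where you diverge is in how (iii) and (iv) are reduced to that corollary. The paper first establishes the additive splitting $S_n(m)\equiv S_n(qp^d)+S_n\bigl(r\,\frac{p^d-1}{p-1}\bigr)\pmod{p^d}$ by expanding the binomials $(qp^d+j)^n$, evaluates $S_n(qp^d)$ with the corollary, and then computes the tail: in (iii) via $S_n(p^d-1)\equiv S_n(p^d)$, and in (iv) via the pairing $k\leftrightarrow p^d-k$, which gives $S_n\bigl(\frac{p^d-1}{2}\bigr)\equiv\frac{1}{2}S_n(p^d)$ for even $n$. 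You bypass the splitting entirely: in (iii) you rewrite $m=(q+1)p^d-1$ and apply the corollary with multiplier $q+1$, and in (iv) you exploit $2m+1=(2q+1)p^d$, reflect over the interval $[1,2m]$ instead of $[1,p^d-1]$ to get $S_n(2m)\equiv 2S_n(m)$ for even $n$, then treat $2m=(2q+1)p^d-1$ as in (iii) and multiply by $2^{-1}$. The two routes cost about the same and produce identical constants; yours has the small advantage of never invoking the binomial-reduction step, at the price of working modulo $p^d$ with the auxiliary sum $S_n(2m)$. One sentence you should add to close the argument: in the first branch of (iv) the hypothesis is only $p-1\mid n$, whereas your reflection identity needs $n$ even; this is harmless because $p$ is odd, so $p-1$ is even and $p-1\mid n$ forces $n$ to be even, but as written your proof of that branch silently assumes the parity.
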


\begin{proof}
Since $m>0$, we can write it in base~$p$ as
$m={a_ka_{k-1}\dots a_{d} a_{d-1}\dots a_{1}a_{0}}_p$ with a leading zero $a_k=0$,
all $a_i \in \{0,1,\dots,p-1\}$, and $r:=a_0=a_1=\dots=a_{d-1}\neq a_d$, where $d\ge1$. Then $m=qp^d+ r\, \frac{p^d-1}{p-1}$, where $0\le q=\sum_{i=0}^{k-d}a_{d+i}p^i  \equiv a_d\not \equiv r \equiv m\pmod{p}$, proving (i).

If $m \equiv 0 \pmod{p}$, then $r=0$. Hence $m=p^dq$, and Corollary \ref{PowerSumporder} implies (ii).

Reducing binomials of the form $(q p^d+j)^n$ modulo $p^d$ shows that
\[
S_{n}(m)=S_{n}\!\left(q p^d+ r\,\frac{p^d-1}{p-1}\right) \equiv S_{n}(q p^d)+S_{n}\!\left(r\,\frac{p^d-1}{p-1}\right) \pmod{p^{d}},
\]
and Corollary \ref{PowerSumporder} computes the term $S_{n}(qp^d)$ modulo $p^d$. It remains to compute the last term modulo $p^d$ in case $m \equiv -1$ or $\frac{p-1}{2} \pmod{p}$.

If $m \equiv -1 \pmod{p}$, then $r= p-1$. Now,
\begin{align*}
S_{n}\!\left( r\,\frac{p^d-1}{p-1}\right)= S_{n}(p^d-1)= S_{n}(p^d) -p^{dn}\equiv S_{n}(p^d) \pmod{p^d},
\end{align*}
and another application of Corollary \ref{PowerSumporder} yields (iii).

Finally, if $m \equiv \frac{p-1}{2} \pmod{p}$, then $r= \frac{p-1}{2}$ and
\begin{align*}
S_{n}\!\left( r\,\frac{p^d-1}{p-1}\right)=S_{n}\!\left( \frac{p^d-1}{2}\right).
\end{align*}
To compute the latter modulo $p^d$ when $n$ is even, we write
\[
S_n(p^d-1) = \sum_{k=1}^{(p^d-1)/2} \!\left(k^n + (p^d-k)^n\right) \equiv 2S_{n}\!\left( \frac{p^d-1}{2}\right) \pmod{p^d}.
\]
Since $S_{n}(p^d-1)\equiv S_{n}(p^d) \pmod{p^d},$ we get
\[
 \text{$n$ even}\implies S_{n}\!\left(\frac{p^{d}-1}{2}\right) \equiv  \frac{1}{2} S_n(p^d) \pmod{p^d},\\
\]
and a final application of Corollary \ref{PowerSumporder} gives (iv).
\end{proof}


\begin{definition}\label{DEF:order}
 For $q\in\Z$ and prime~$p$, the \emph{$p$-adic order of $q$} is the exponent $ v_p(q)$ of the highest power of $p$ that divides~$q$:
 $$v_p :\Z\to\N\cup\{0,\infty\}, \qquad v_p(q):=\sup_{p^d\mid q}d.$$
 \end{definition}

The function $v_p(\cdot)$ is totally additive: $v_p(x\cdot y)=v_p(x)+v_p(y)$ for any $x$ and $y$. Note that $v_p(q)\in\N\cup\{0\}$ for $q\neq0$, and $v_p(0)=\infty$.

For the next result, we will find it useful to write a positive integer $m$ in a certain nice form which allows us to determine the least $d$ for which $S_{n}(m) \pmod{p^d} $ is not zero for $n$ divisible by $p-1$. More generally, we let $m$ lie in the {\it $p$-adic integers} $\Z_p$ and note that $v_p$ can be defined on $\Z_p$ by considering the digits of the base $p$ expansion.

\begin{definition}
Define a map $V_p: \mathbb{Z}_p \rightarrow \N\cup \{0,\infty\}$ by
\[
V_p(m):=v_p(m-\lfloor\frac{m}{p}\rfloor)+1.
\]
\end{definition}

This function can be interpreted as follows: $V_p(m)$ counts the number of equal $p$-digits at the end of the base $p$ expansion of $m \in {\Z}_p$.

\begin{lemma} \label{lemma: lastDigits} Write $m \in {\Z}_p$ in base~$p$ as
\[
m={\dots a_{k}  \dots a_1 a_0}_p=\sum_{i=0}^{\infty} a_i p^i,
\]
with $a_i \in \{0,1,\ldots,p-1\}$ for each $i$. Let 
\[
h=\sup\{i \in \N\cup \{0\}: a_i=a_j  \ \forall \ 0 \leq j \leq i\}.
\]
Then $V_p(m)=h+1$.
\end{lemma}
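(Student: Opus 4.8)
The plan is to unwind the definition $V_p(m)=v_p\!\left(m-\lfloor\frac{m}{p}\rfloor\right)+1$ and prove directly that $v_p\!\left(m-\lfloor\frac{m}{p}\rfloor\right)=h$. First I would pin down the meaning of $\lfloor m/p\rfloor$ for $m\in\Z_p$: writing $m=\sum_{i\ge0}a_ip^i$ with digits $a_i\in\{0,\ldots,p-1\}$, the last digit is $a_0$, and $\lfloor m/p\rfloor$ is the unique $p$-adic integer with $m=p\lfloor m/p\rfloor+a_0$, i.e.\ the digit shift $\lfloor m/p\rfloor=\sum_{i\ge1}a_ip^{i-1}$.

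Subtracting the two series gives the telescoping-type expression
\[
m-\lfloor m/p\rfloor=\sum_{i\ge0}(a_i-a_{i+1})\,p^i.
\]
The next step is to invoke the defining property of $h$: by definition $a_0=a_1=\cdots=a_h=:r$, while (when $h$ is finite) $a_{h+1}\ne a_h$. Hence every coefficient $a_i-a_{i+1}$ vanishes for $0\le i<h$, and the first surviving term sits at index $i=h$, so that
\[
m-\lfloor m/p\rfloor=p^{h}\bigl((a_h-a_{h+1})+(a_{h+1}-a_{h+2})p+\cdots\bigr).
\]

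The crucial observation, and the only place any care is needed, is that the bracketed factor is a $p$-adic unit, so the valuation is \emph{exactly} $h$ rather than merely $\ge h$. This holds because its constant term $a_h-a_{h+1}$ is a nonzero integer of absolute value at most $p-1$ (a difference of two distinct digits in $\{0,\ldots,p-1\}$), hence not divisible by $p$; reducing the bracket modulo $p$ leaves $a_h-a_{h+1}\not\equiv0\pmod p$. Therefore $v_p\!\left(m-\lfloor m/p\rfloor\right)=h$, and $V_p(m)=h+1$, as claimed.

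Finally I would dispose of the degenerate case $h=\infty$, which occurs exactly when all digits are equal; then every coefficient $a_i-a_{i+1}$ is $0$, so $m-\lfloor m/p\rfloor=0$, giving $v_p=\infty$ and $V_p(m)=\infty=h+1$ under the usual convention. The main (and very mild) obstacle is simply confirming that $\lfloor\cdot/p\rfloor$ on $\Z_p$ is the digit shift and that the leading coefficient cannot be a multiple of $p$; everything else is routine bookkeeping with the series form of a $p$-adic integer.
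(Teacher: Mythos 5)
Your proposal is correct and follows essentially the same route as the paper's own proof: identify $\lfloor m/p\rfloor$ as the digit shift, write $m-\lfloor m/p\rfloor=\sum_{i\ge0}(a_i-a_{i+1})p^i$, observe that the coefficients vanish below index $h$ and that the coefficient at index $h$ is nonzero, and treat $h=\infty$ separately. In fact you are slightly more careful than the paper, which asserts $v_p\!\left(m-\lfloor m/p\rfloor\right)=h$ without spelling out your key point that the constant term $a_h-a_{h+1}$ of the factored expression is a nonzero integer of absolute value less than $p$, hence a unit modulo $p$.
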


\begin{proof} Indeed,
\[
m-\lfloor\frac{m}{p}\rfloor=\sum_{i=0}^{\infty} a_{i} p^i-\sum_{i=0}^{\infty} a_{i+1} p^i.
\]
If $h=\infty$ then the result follows. Assume then that $h$ is finite. For each of the indices $i=1,2,\ldots,h-1$, we have $a_i=a_{i+1}$. For the index $i=h$, by assumption $a_{h} \neq a_{h+1}$. Therefore $v_p(m-\lfloor\frac{m}{p}\rfloor)=h$. 
\end{proof}

A few comments regarding $V_p$ are in order. From Lemma \ref{lemma: lastDigits}, we see that $V_p(m) = \infty$ exactly when all base $p$ digits of $m$ are the same. The values of $m\in\Z_p$ for which this occurs are
$$m=-\frac{r}{p-1}={\dots r  rr}_p=\sum_{i=0}^{\infty} r p^i
$$
for $r \in \{0,1,\ldots,p-1\}$. In particular, this is the case for $m=-1,0,$ and $-\frac{1}{2}$ when $p$ is odd.

Let $V_p(m)=d$. Then, as in Theorem \ref{CongruenceOfPowerSums}, we may write $m=q p^d +a_0 \sum_{k=0}^{d-1} p^{k}$ with $0\le q \not \equiv a_0 \pmod{p}$.  

\begin{remark} \label{REM:V_p}
{\em If $m \equiv -1 \pmod{p}$, then the equalities} $V_p(m)=V_p(m+1)=v_p(m+1)$ {\em hold}. Indeed, write $m$ in base~$p$ as
\[
m= \dots a_h (p-1) (p-1) \dots (p-1)_p,
\]
with $a_h \neq p-1$ so that $V_p(m)=h$. Notice that $a_h \neq p-1$ implies $v_p(m+1)=h$ because $m+1= \dots a_{h+1}(a_h+1) 00 \dots 0_p,$ since $a_h<p-1$. Thus $V_p(m)=V_p(m+1)=v_p(m+1)$.
\end{remark}

\begin{theorem} \label{porderOfPowerSums}
Let $p$ be an odd prime and let $m$ be a positive integer.\\
{\rm(i).} In case $m\equiv0$ or $-1\pmod{p}$, we have
\[
v_p(S_n(m)) \begin{cases} =v_p(S_{p-1}(m))=V_p(m)-1\quad \text{ if }p-1\mid n,\\
                                                 \geq V_p(m)\qquad\qquad\qquad\qquad\, \quad\text{ if }p-1\nmid n.
\end{cases}
\]
{\rm(ii).} In case $m\equiv \frac{p-1}{2} \pmod{p}$, we have
\[
v_p(S_n(m)) \begin{cases} =v_p(S_{p-1}(m))=V_p(m)-1\quad \text{ if $n$ is even},\\
                                                 \geq V_p(m)\qquad\qquad\qquad\qquad\, \quad\text{ if $n$ is odd.}
\end{cases}
\]
\end{theorem}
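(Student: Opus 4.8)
The plan is to derive the whole theorem from Theorem~\ref{CongruenceOfPowerSums} by first recognizing that the exponent $d$ occurring there is exactly $V_p(m)$. Writing $m=qp^d+r\frac{p^d-1}{p-1}$ as in Theorem~\ref{CongruenceOfPowerSums}(i), the last $d$ base-$p$ digits of $m$ all equal $r$, while the next digit satisfies $a_d\equiv q\not\equiv r\pmod p$. By Lemma~\ref{lemma: lastDigits} the run of equal trailing digits has length exactly $d$, so $V_p(m)=d$ (and this is finite since $m$ is a positive integer). With this identification, each case reduces to reading a $p$-adic valuation off the appropriate congruence.

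For the first (``equality'') branch of each part, I would argue as follows. When $p-1\mid n$, Theorem~\ref{CongruenceOfPowerSums} gives $S_n(m)\equiv -p^{d-1}u\pmod{p^d}$, where $u=q$, $u=q+1$, or $u=q+\frac12$ in the respective cases $m\equiv 0,-1,\frac{p-1}{2}\pmod p$. The crux is that $u$ is a $p$-adic unit. Indeed, the condition $q\not\equiv r\pmod p$ from Theorem~\ref{CongruenceOfPowerSums}(i) yields $q\not\equiv 0$ when $r=0$, $q+1\not\equiv 0$ when $r=p-1$, and---using $\frac{p-1}{2}\equiv-\frac12\pmod p$, which is meaningful because $p$ is odd---$q+\frac12\not\equiv 0$ when $r=\frac{p-1}{2}$. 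Writing $S_n(m)=p^{d-1}(-u+pt)$ for a suitable $t$, the factor $-u+pt$ is a unit, so $v_p(S_n(m))=d-1=V_p(m)-1$. Since $n=p-1$ itself satisfies $p-1\mid n$, the same formula applied with $n=p-1$ gives $v_p(S_{p-1}(m))=V_p(m)-1$, establishing the asserted equality $v_p(S_n(m))=v_p(S_{p-1}(m))$.

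For the ``$\ge$'' branches the work is lighter: Theorem~\ref{CongruenceOfPowerSums} states outright that $S_n(m)\equiv 0\pmod{p^d}$, whence $v_p(S_n(m))\ge d=V_p(m)$, with no unit analysis needed.

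The step I expect to require the most care is part (ii), $m\equiv\frac{p-1}{2}\pmod p$. Here the congruence for $S_n\!\left(\frac{p^d-1}{2}\right)$ in Theorem~\ref{CongruenceOfPowerSums}(iv) is obtained only after the symmetric pairing $k^n+(p^d-k)^n\equiv 2k^n\pmod{p^d}$, which holds precisely because $n$ is even; this parity hypothesis is what governs the even/odd split, and I would track carefully how it interacts with the divisibility condition $p-1\mid n$, since it is exactly in the regime $p-1\mid n$ that the unit coefficient $q+\frac12$ produces the value $V_p(m)-1$. Confirming that this parity-versus-divisibility bookkeeping is consistent, and that $\frac12$ is a genuine $p$-adic unit throughout (again because $p$ is odd), is the main obstacle; once it is settled, the valuations fall out of the two mechanical readings above.
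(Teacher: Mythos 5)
Your identification $d=V_p(m)$ (via Lemma~\ref{lemma: lastDigits}) and your treatment of part (i) are correct, and they follow the same route as the paper, whose entire proof is the single sentence ``This follows immediately from Theorem~\ref{CongruenceOfPowerSums}.'' The unit observation---that $q\not\equiv r\pmod p$ forces $q$, $q+1$, and $q+\tfrac12$ to be prime to $p$ in the three respective cases, so that $S_n(m)\equiv -p^{d-1}u\pmod{p^d}$ pins the valuation at exactly $d-1$---is precisely the detail the paper leaves implicit, and you supply it correctly, including the remark that $\tfrac12$ is a $p$-adic unit and that $\frac{p-1}{2}\equiv-\tfrac12\pmod p$.

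The genuine gap is in part (ii), at exactly the spot you flag as ``the main obstacle'' and then wave through. Theorem~\ref{CongruenceOfPowerSums}(iv) has two branches: $p-1\mid n$ (giving $-p^{d-1}(q+\tfrac12)$) and ``$p-1\nmid n$ \emph{and $n$ even}'' (giving $0$); it says nothing whatsoever about odd $n$. So your assertion that for the ``$\ge$'' branch of (ii) ``Theorem~\ref{CongruenceOfPowerSums} states outright that $S_n(m)\equiv 0\pmod{p^d}$'' cites a case that does not exist. Worse, for $n$ even with $p-1\nmid n$, Theorem~\ref{CongruenceOfPowerSums}(iv) gives $S_n(m)\equiv 0\pmod{p^d}$, i.e.\ $v_p(S_n(m))\ge V_p(m)$, which is incompatible with the equality $v_p(S_n(m))=V_p(m)-1$ that the first branch of (ii) asserts for \emph{all} even $n$. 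Indeed part (ii) as printed is false: take $p=5$ and $m=2\equiv\frac{p-1}{2}\pmod 5$, so $V_5(2)=1$; then $S_2(2)=5$ has $v_5=1\ne V_5(2)-1=0$, and $S_1(2)=3$ has $v_5=0<V_5(2)$, contradicting both branches. The statement is true (and your unit argument proves it) only when its branches are read as ``$p-1\mid n$'' and ``$p-1\nmid n$ and $n$ even,'' which is evidently what was intended and is all the paper's later applications use (e.g.\ Corollary~\ref{COR: p=3}(ii) takes exponents $4n$). So the parity-versus-divisibility bookkeeping you deferred is not a formality: it is the point where the proof---and the printed statement itself---breaks, and a correct write-up must restate the branches of (ii) rather than claim they follow from Theorem~\ref{CongruenceOfPowerSums} as given.
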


\begin{proof}
This follows immediately from Theorem \ref{CongruenceOfPowerSums}.
\end{proof}

As an example, take $p=3$ and $m=1222_3$ in base $3$. In particular, there are three copies of $2$ at the end, so we know that $V_3(m)=3$. By Theorem~\ref{porderOfPowerSums}, for any even~$n$,
\[
v_3(S_n(m))=v_3(S_2(m))=V_3(m)-1=2.
\]
As $m=53$, this agrees with the fact that $S_2(53)=53\cdot54(2\cdot53+1)/6=51039=3^2 \cdot 53 \cdot 107$.

We note that Theorem \ref{porderOfPowerSums} is tight. Indeed, take $p=5$ and $n=8$, so that $p-1\mid n$. Besides $m\equiv0, (p-1)/2,p-1\pmod{p}$, consider the remaining two congruence classes, namely $m\equiv1,3 \pmod{5}$. First, take $m=6 \equiv 1 \pmod{5}$. We then have $S_4(6)=2275 \equiv 0 \pmod{25}$, whereas $S_8(6)=2142595 \equiv 20 \pmod{25}$. Now take $m=18 \equiv 3 \pmod{5}$. Then $S_4(18)=432345 \equiv 20 \pmod{25}$, whereas $S_8(18)=27957167625 \equiv 0 \pmod{25}$. Thus in both cases $v_p(S_n(m)) \neq v_p(S_{p-1}(m)).$

As an application, we obtain a simple proof of the following classical result.

\begin{corollary} \label{FactorsOfPowerSumPolynomial}
For even $n$, the polynomial in $\Q[x]$ interpolating $S_n(x)$ is divisible by the product $x(x+1)(2x+1)$.
\end{corollary}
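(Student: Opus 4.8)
\emph{Strategy.} Since $S_n(x)\in\Q[x]$ has degree $n+1$ and the proposed divisor $x(x+1)(2x+1)$ is a product of three distinct linear factors, it suffices to exhibit $0$, $-1$, and $-\tfrac12$ as roots of $S_n$. The root at $0$ is immediate, as $S_n(0)=0$ by definition; and the root at $-1$ is elementary, since the polynomial identity $S_n(x)-S_n(x-1)=x^n$ (which holds because both sides agree at every positive integer) evaluated at $x=0$ yields $S_n(-1)=S_n(0)-0^n=0$ for $n\ge1$. All of the content thus lies in the root at $-\tfrac12$, and this is precisely the divisibility that Theorem~\ref{porderOfPowerSums}(ii) is built to control.

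\emph{The root at $-\tfrac12$.} Here I would argue $p$-adically, fixing any odd prime $p$. In $\Z_p$ the element $-\tfrac12$ has every base-$p$ digit equal to $\tfrac{p-1}{2}$, and its truncations are the positive integers $m_d:=\tfrac{p^d-1}{2}$, whose base-$p$ expansion is a block of $d$ copies of $\tfrac{p-1}{2}$. Thus $m_d\equiv\tfrac{p-1}{2}\pmod p$, $V_p(m_d)=d$, and $m_d\to-\tfrac12$ in $\Z_p$ as $d\to\infty$. Because $n$ is even, Theorem~\ref{porderOfPowerSums}(ii) bounds $v_p(S_n(m_d))$ below by $V_p(m_d)-1=d-1$, so the integers $S_n(m_d)$ satisfy $v_p(S_n(m_d))\to\infty$; that is, $S_n(m_d)\to0$ in $\Q_p$.

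\emph{Passing to the limit.} The crux is to deduce from this that the polynomial itself vanishes at $-\tfrac12$. Viewing $S_n$ as an element of $\Q_p[x]$, evaluation is continuous on $\Z_p$, so $m_d\to-\tfrac12$ forces $S_n(-\tfrac12)=\lim_d S_n(m_d)=0$ in $\Q_p$; and since $S_n(-\tfrac12)$ is a rational number, it is genuinely $0$. (The roots at $0$ and $-1$ can be recovered the same way from part~(i), using the truncations $p^d\to0$ and $p^d-1\to-1$, along which $v_p(S_n(\cdot))\ge V_p(\cdot)-1\to\infty$ regardless of whether $p-1\mid n$.) I expect the one delicate point to be the justification of this limiting step, namely spelling out that the $S_n(m_d)$ are the honest integer power sums, that their $p$-adic valuations diverge, and that continuity of polynomial evaluation over $\Q_p$ then pins $S_n(-\tfrac12)$ to $0$. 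Once that is in hand, the three roots, and hence the factorization, are immediate.
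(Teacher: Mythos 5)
Your proposal is correct and takes essentially the same approach as the paper: the crux, establishing the root at $-\tfrac12$ via the integer truncations $\tfrac{p^d-1}{2}\to-\tfrac12$ in $\Z_p$, the valuation bound from Theorem~\ref{porderOfPowerSums}(ii), and continuity of polynomial evaluation over $\Q_p$, is exactly the paper's argument. The only (harmless) variation is that you obtain the roots at $0$ and $-1$ elementarily from the identity $S_n(x)-S_n(x-1)=x^n$, whereas the paper treats them by the same $p$-adic limiting device using part~(i).
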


\begin{proof}
Fix an odd prime $p$. First, consider the sequence $x_i=p^i$, for $i=1,2,\ldots$. We have $v_p(x_i)=i$, so that $x_i \rightarrow 0$ $p$-adically. On the other hand, $v_p(S_n(x_i))\geq V_p(x_i)-1=i-1$ by Theorem \ref{porderOfPowerSums}. Therefore $S_n(x_i) \rightarrow 0$ $p$-adically. By continuity, $x=0$ is a root of $S_n(x)$.

Similarly, consider the sequence $x_i=\sum_{j=0}^{i} (p-1)p^j$, for $i=1,2,\ldots$. This sequence converges $p$-adically to $-1$. Theorem \ref{porderOfPowerSums} gives $v_p(S_n(x_i)) \geq V_p(x_i)-1=i-1$. Therefore, $x=-1$ is a root of $S_n(x)$.

Finally, the sequence $x_i=\sum_{j=0}^{i} \frac{p-1}{2}p^j$, which converges $p$-adically to $-1/2$, shows that $x=-1/2$ is a root of $S_n(x)$.
\end{proof}

The next result gives two special cases of Theorem \ref{porderOfPowerSums}.

\begin{corollary} \label{COR: p=3}
Let $m$ and $n$ be positive integers.\\
{\rm(i).} The $3$-adic order of $S_{2n}(m)$ equals
\[
v_3(S_{2n}(m))=v_3(m(m+1)(2m+1)/3) = V_3(m)-1.
\]  
{\rm(ii).} If $m\equiv 0,2,$ or $4 \pmod{5}$, then the $5$-adic order of $S_{4n}(m)$ equals
\[
v_5(S_{4n}(m))=v_5(m (m+1) (2 m+1) (3 m^2 + 3 m -1)/5) = V_5(m)-1.
\]
\end{corollary}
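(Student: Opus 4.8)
The plan is to obtain both parts as immediate specializations of Theorem~\ref{porderOfPowerSums}, supplemented only by the closed forms
$$S_2(m)=\frac{m(m+1)(2m+1)}{6},\qquad S_4(m)=\frac{m(m+1)(2m+1)(3m^2+3m-1)}{30}$$
together with the elementary facts $v_3(6)=1$ and $v_5(30)=1$ and the total additivity of $v_p$.

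For part (i) I would first note that with $p=3$ one has $p-1=2$, so the three residue classes $m\equiv 0,1,2\pmod 3$ are exactly the classes $m\equiv 0,\ \frac{p-1}{2},\ -1\pmod p$ handled by Theorem~\ref{porderOfPowerSums}. The exponent $2n$ satisfies both $p-1=2\mid 2n$ (relevant when $m\equiv 0,-1$) and ``$2n$ even'' (relevant when $m\equiv 1=\frac{p-1}{2}$), so in every case the theorem gives $v_3(S_{2n}(m))=v_3(S_2(m))=V_3(m)-1$. It then remains only to rewrite $v_3(S_2(m))$: since dividing the numerator $m(m+1)(2m+1)$ by $6$ lowers the $3$-adic order by exactly $v_3(6)=1$, we get $v_3(S_2(m))=v_3\big(m(m+1)(2m+1)/3\big)$, which is the asserted chain of equalities.

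Part (ii) runs identically with $p=5$, $p-1=4$, and $S_{p-1}=S_4$. Here the hypothesis $m\equiv 0,2,4\pmod 5$ is precisely the statement that $m$ lies in one of the classes $0,\ \frac{p-1}{2},\ -1\pmod p$; I would emphasize that this restriction is genuinely needed, since the tightness example following Theorem~\ref{porderOfPowerSums} shows the conclusion fails for $m\equiv 1,3\pmod 5$. The exponent $4n$ is divisible by $p-1=4$ and is even, so the theorem yields $v_5(S_{4n}(m))=v_5(S_4(m))=V_5(m)-1$, and the closed form for $S_4$ together with $v_5(30)=1$ converts this to $v_5\big(m(m+1)(2m+1)(3m^2+3m-1)/5\big)$.

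I do not expect a genuine obstacle: the substance is all in Theorem~\ref{porderOfPowerSums}, and what remains is bookkeeping. The only points needing care are matching the residue classes to the theorem's hypotheses --- noticing that for $p=3$ all residues are covered while for $p=5$ exactly the three classes $0,2,4$ are --- and checking that division by $6$ (respectively $30$) drops the relevant valuation by exactly one, which is immediate from $v_3(6)=1$ and $v_5(30)=1$.
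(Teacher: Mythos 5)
Your proposal is correct and matches the paper's proof exactly: the paper likewise obtains both parts by taking $p=3$ and $p=5$ in Theorem~\ref{porderOfPowerSums} and invoking the closed forms $S_2(m)=m(m+1)(2m+1)/6$ and $S_4(m)=m(m+1)(2m+1)(3m^2+3m-1)/30$. Your write-up simply makes explicit the bookkeeping the paper leaves implicit (matching residue classes to the theorem's hypotheses and the valuation shift from $v_3(6)=v_5(30)=1$), all of which is accurate.
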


\begin{proof}
Take $p=3$ and $5$ in Theorem \ref{porderOfPowerSums}, and use the formulas $S_2(m)=m (m+1) (2 m+1)/6$ and
$S_4(m)=m (m+1) (2 m+1) (3 m^2 + 3 m -1)/30$, respectively.
\end{proof}

We recall an analogous result for the prime~$2$. (The result is not used in this paper.)

\begin{theorem}[MacMillan and Sondow \cite{MS2}] \label{THM: Sigma(2^d)}
For any positive integers $m$ and $n$, the $2$-adic order of $S_n(m)$ equals
\begin{equation*} 
\begin{aligned}
	v_2\!\left(S_n(m)\right) =
		\begin{cases}
			\, v_2(m(m+1)/2) &\text{if $n=1$ or $n$ is even,}  \\[0.4em]
			\, 2v_2(m(m+1)/2) &\text{if $n\ge3$ is odd.}
		\end{cases}
\end{aligned} 
\end{equation*} 
\end{theorem}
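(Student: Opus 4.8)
The plan is to read off the two regimes directly from the classical theorem of Faulhaber, which expresses $S_n(m)$ as a polynomial in the triangular number $T:=S_1(m)=m(m+1)/2$; throughout, set $t:=v_2(T)=v_2\big(m(m+1)/2\big)$, the quantity whose multiples appear in the statement. When $n=1$ the assertion is the tautology $S_1(m)=T$, so I may assume $n\ge 2$. Faulhaber's theorem then gives
\[
S_n(m)=\begin{cases} T^2\,f_n(T), & n\ge 3 \text{ odd},\\ (2m+1)\,T\,g_n(T), & n\ge 2 \text{ even},\end{cases}
\]
for suitable $f_n,g_n\in\Q[T]$ with $f_n(0),g_n(0)\ne 0$. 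The exponents $2$ and $1$ on $T$ are exactly what will produce the factors $2t$ and $t$ in the asserted valuations, so the conceptual content of the theorem is already visible here: the even/odd dichotomy is the Faulhaber dichotomy.

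Because $2m+1$ is odd, $v_2(2m+1)=0$, and since $v_2(T)=t$, the total additivity of $v_2$ reduces both cases to the single assertion that each Faulhaber cofactor is a $2$-adic unit at $T=m(m+1)/2$, i.e.
\[
v_2\big(f_n(T)\big)=0 \qquad\text{and}\qquad v_2\big(g_n(T)\big)=0 .
\]
Writing a cofactor in lowest terms as $A(T)/B$ with $A\in\Z[T]$ and $B\in\N$, this is equivalent to two elementary-looking parity facts: that $B$ is odd, and that $A$ takes odd values at every integer $T$ (equivalently, that $A(0)$ and $A(1)$ are both odd). One verifies these in the first cases by hand; for instance $f_5(T)=(4T-1)/3$ and $g_4(T)=(6T-1)/15$ have odd numerators for every integer $T$ and odd denominators, giving $v_2(S_5(m))=2t$ and $v_2(S_4(m))=t$.

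The main obstacle is precisely this parity control of $A$ and $B$ in general. The denominators are governed by the Bernoulli numbers hidden in Faulhaber's coefficients, and the delicate point is that although the von Staudt--Clausen theorem forces each $B_{2j}$ to carry a single factor of $2$ in its denominator, these $2$'s must cancel inside the cofactor so that $B$ is odd; establishing this cancellation, together with the oddness of $A$, is the crux. I would attack both statements simultaneously by induction on $n$, using Pascal's identity~\eqref{EQ:Pascal} together with its even-exponent analogue (Theorem~\ref{THM:evenPascal}) to express $S_n$ through lower power sums and then reducing the recursion modulo~$2$; the even-exponent identity is well suited here because it couples $S_n$ only to even-index sums, keeping the two regimes cleanly separated. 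A self-contained alternative would bypass Faulhaber altogether, instead pairing $j$ with $m+1-j$ and expanding by the binomial theorem so as to isolate a leading triangular-number factor while exhibiting the remaining terms as divisible by higher powers of $2$, organized as a recursion that peels one factor of $2$ off $m(m+1)$ at a time; this should recover the same dichotomy. Either way, the one genuinely substantial step is the $2$-adic unit claim for the cofactor.
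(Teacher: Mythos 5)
First, a point of comparison: the paper contains no proof of this theorem at all --- it is recalled verbatim from MacMillan and Sondow \cite{MS2}, with the explicit remark that it is not used in the paper. So your proposal must stand on its own, and as written it does not: it is a reduction, not a proof. The Faulhaber decomposition $S_n(m)=T^2f_n(T)$ for odd $n\ge3$ and $S_n(m)=(2m+1)\,T\,g_n(T)$ for even $n$ is classical and correctly invoked, and your observation that the theorem is then equivalent to the single claim $v_2(f_n(T))=v_2(g_n(T))=0$ at triangular arguments is also correct. But that claim --- which you yourself label the crux --- is never established. Checking $f_5$ and $g_4$ by hand, together with the (valid) remark that a polynomial in $\Z[T]$ is odd-valued at all integers if and only if it is odd at $T=0$ and $T=1$, covers only finitely many exponents; for general $n$ you must show both that the reduced common denominator $B$ is odd and that the numerator polynomial $A$ is odd-valued, and neither is done.

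The difficulty is not cosmetic. Given Faulhaber, the unit claim is \emph{equivalent} to the theorem, so deferring it defers the entire content of the result. The oddness of $B$ requires showing that the factors of $2$ which the von Staudt--Clausen theorem forces into the denominator of every $B_{2j}$ cancel in the particular combinations forming the Faulhaber coefficients (note, e.g., that $g_n$ arises as $\bigl(2f(T)+Tf'(T)\bigr)/\bigl(2(n+1)\bigr)$ from the odd-index polynomial $f$, so an explicit factor of $2$ must be absorbed), and the oddness of the values of $A$ is a further parity computation. Your two suggested routes --- induction through Pascal's identity \eqref{EQ:Pascal} and its even analogue (Theorem \ref{THM:evenPascal}), or a pairing $j\leftrightarrow m+1-j$ --- are named but not executed; in particular it is unclear how the even-exponent identity, which expresses $\tfrac12\bigl((a+1)^n-a^n-1\bigr)$ as a sum involving all of $S_0,S_2,\dots,S_{n-2}$, lets you isolate the $2$-adic valuation of the top term without already knowing the valuations of the lower terms \emph{and} that the dominant contribution is not cancelled --- which is precisely the dichotomy being proved. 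To complete the argument you would either have to carry out this $2$-adic analysis of the Bernoulli/Faulhaber coefficients, or drop Faulhaber entirely and argue as in \cite{MS2}, splitting $m$ $2$-adically and using a linearity lemma for $p=2$ in the spirit of Lemma \ref{LEM:linear} and Theorem \ref{THM:PowerSumporder}.
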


As an application of our results to the the Erd\H{o}s--Moser equation, we have the following theorem. Part (i) is due to Moree \cite[Proposition~9]{Moree}.

\begin{theorem} \label{thm: EMresults}
Let $p$ be an odd prime. \\
{\rm(i).} In the generalized Erd\H{o}s--Moser equation, if $p\mid m+1$, then $p-1 \nmid n$.\\
{\rm(ii).} In the Erd\H{o}s--Moser equation, if $p\mid m$, then $p-1 \mid n$ and $p^2\mid m+p$. Also, if $p\mid m- \frac{p-1}{2}$, then $p-1 \mid n$ and $m \equiv   -(p+\frac{1}{2}) \pmod{p^2}$.
\end{theorem}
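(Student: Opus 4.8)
The plan is to compare the $p$-adic orders (and, when needed, the residues modulo $p$ and $p^2$) of the two sides of each equation. In every case the hypothesis puts $m$ into one of the residue classes $0,-1,$ or $\frac{p-1}{2}\pmod p$ that are governed by Theorems~\ref{porderOfPowerSums} and~\ref{CongruenceOfPowerSums}, while the right-hand side contributes $v_p\!\big(a(m+1)^n\big)=v_p(a)+n\,v_p(m+1)$. So the strategy is to read off $v_p(S_n(m))$ from the power-sum results, match it against this quantity, and extract the divisibility statements.

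For part~(i) I would argue by contradiction, assuming $p-1\mid n$. Since $p\mid m+1$ gives $m\equiv-1\pmod p$, Theorem~\ref{porderOfPowerSums}(i) yields $v_p(S_n(m))=V_p(m)-1$, and Remark~\ref{REM:V_p} identifies $V_p(m)=v_p(m+1)$, so $v_p(S_n(m))=v_p(m+1)-1$. But the equation forces $v_p(S_n(m))=v_p(a)+n\,v_p(m+1)\ge v_p(m+1)$, using $v_p(a)\ge0$, $n\ge1$, and $v_p(m+1)\ge1$. Since $v_p(m+1)>v_p(m+1)-1$, these are incompatible, so $p-1\nmid n$.

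For part~(ii) we have $a=1$, and in both sub-cases $(m+1)^n$ is a $p$-adic unit (because $m+1\equiv1$ or $\frac{p+1}{2}\pmod p$), hence $v_p(S_n(m))=0$. If $p\mid m$, apply Corollary~\ref{PowerSumporder} with $d=1$, $q=m/p$: when $p-1\nmid n$ it gives $S_n(m)\equiv0\pmod p$, contradicting $v_p(S_n(m))=0$, so $p-1\mid n$; and then the same corollary gives $S_n(m)\equiv-m/p\pmod p$, which against $(m+1)^n\equiv1\pmod p$ forces $m/p\equiv-1\pmod p$, i.e.\ $p^2\mid m+p$. If instead $m\equiv\frac{p-1}{2}\pmod p$, I would first obtain $p-1\mid n$ (see below) and then invoke Theorem~\ref{CongruenceOfPowerSums}(iv): from $v_p(S_n(m))=0=V_p(m)-1$ we get $V_p(m)=1$, so $m=qp+\frac{p-1}{2}$ with $q\not\equiv\frac{p-1}{2}\pmod p$ and $S_n(m)\equiv-(q+\tfrac12)\pmod p$. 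As $(m+1)^n\equiv 2^{-n}\equiv1\pmod p$, this gives $q\equiv-\tfrac32\pmod p$; substituting into $m=qp+\frac{p-1}{2}$ and reducing modulo $p^2$ yields $m\equiv-\tfrac32 p+\frac{p-1}{2}\equiv-(p+\tfrac12)\pmod{p^2}$.

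The routine content is the binomial and unit bookkeeping, so I expect the one real obstacle to be forcing $p-1\mid n$ in the class $m\equiv\frac{p-1}{2}$. In the class $m\equiv0$ every exponent with $p-1\nmid n$ already produces positive $p$-adic order, but the results at $\frac{p-1}{2}$ only constrain even exponents: Theorem~\ref{CongruenceOfPowerSums}(iv) kills the sub-case ``$n$ even and $p-1\nmid n$'' (there $S_n(m)\equiv0\pmod{p^{V_p(m)}}$, contradicting $v_p(S_n(m))=0$), whereas for odd $n$ the order can genuinely vanish, e.g.\ $v_5(S_1(2))=0$. I would close the odd case by restricting to a nontrivial solution and using the classical fact that $n$ is then even (in the spirit of the paper's later use of $3^5\mid n$). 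Once $n$ is even, the remaining step is immediate, since $V_p(m)=1$ collapses $m$ to $qp+\frac{p-1}{2}$ and the congruence modulo $p^2$ follows from the single residue $q\equiv-\tfrac32\pmod p$.
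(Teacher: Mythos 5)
Your proposal is correct and, for part (i) and the first half of part (ii), it is essentially the paper's own proof: part (i) is the same contradiction via Theorem \ref{porderOfPowerSums}(i) and Remark \ref{REM:V_p}, and for $p \mid m$ you reduce modulo $p$ where the paper reduces modulo $p^{v_p(m)}$ --- a harmless simplification reaching the same conclusion $p^2 \mid m+p$. The one point of divergence is exactly the point you flag as the real obstacle, and there you are more careful than the paper. For $m \equiv \frac{p-1}{2} \pmod{p}$, the paper writes $m = a_d p^d + \frac{p^d-1}{2}$, reduces \eqref{EQ:EME} modulo $p^d$, and then asserts ``By Theorem \ref{CongruenceOfPowerSums}, we see that $p-1 \mid n$''; but Theorem \ref{CongruenceOfPowerSums}(iv) is silent when $n$ is odd, so this step only excludes the case ``$n$ even and $p-1 \nmid n$,'' precisely as you observe. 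Your patch --- restricting to nontrivial solutions and importing Moser's parity result that $n$ is then even (the same fact the paper itself uses in Theorem \ref{cor:EMCongruences}) --- is not optional: your example $v_5(S_1(2))=0$ is the trivial solution $(m,n)=(2,1)$ with $p=5$, which satisfies the hypothesis $p \mid m-\frac{p-1}{2}$ yet violates the conclusion $p-1 \mid n$, so the second claim of (ii) is false as literally stated and genuinely needs the nontriviality hypothesis. (The same example contradicts the odd-$n$ clause of Theorem \ref{porderOfPowerSums}(ii), which is presumably what the paper was implicitly leaning on.) The remaining computation --- forcing $V_p(m)=1$ from $v_p(S_n(m))=0$ and solving $-(q+\frac{1}{2}) \equiv 2^{-n} \equiv 1 \pmod{p}$ to get $m \equiv -(p+\frac{1}{2}) \pmod{p^2}$ --- coincides with the paper's.
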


\begin{proof}
(i). Assume that $m \equiv -1 \pmod{p}$. Then by Remark \ref{REM:V_p} we have $V_p(m)=v_p(m+1)$. If $p-1 \mid n$, then using Theorem \ref{porderOfPowerSums} and applying $v_p$ to both sides of equation \eqref{EQ:GEME} gives
\[
V_p(m)-1=v_p(S_n(m))=v_p(a)+nv_p(m+1)=v_p(a)+nV_p(m),
\]
contradicting $v_p\ge0$ and $V_p\ge0$. Therefore $p-1 \nmid n$. \\
(ii). If $p \mid m$, write $m=p^dq$, with $d>0$ and $p\nmid q$. Reducing both sides of \eqref{EQ:EME} modulo $p^d$, we deduce that
$S_n(m) \equiv 1 \pmod{p^d}.$
Hence, by Theorem \ref{CongruenceOfPowerSums}, we must have $p-1 \mid n$ and 
\[
S_n(m)  \equiv -p^{d-1} q \pmod{p^d}.
\]
Thus $-\frac{m}{p} = -p^{d-1} q \equiv 1 \pmod{p^d}$. Since $d\ge1$, this implies $m \equiv -p \pmod{p^2}$.

If $m \equiv \frac{p-1}{2} \pmod{p}$, write $m=a_d p^d+\frac{p^d-1}{2}$. Reducing both sides of \eqref{EQ:EME} modulo $p^d$, we see that 
\[
S_n(m) \equiv \left(\frac{p^d+1}{2}\right)^n \pmod{p^d}.
\]
By Theorem \ref{CongruenceOfPowerSums}, we see that $p-1 \mid n$ and 
\[
-p^{d-1}(a_d+2^{-1}) \equiv  \left(\frac{p^d+1}{2}\right)^n \equiv (2^{-1})^n \pmod{p^d}.
\]
Hence $d=1$. Using the fact that the multiplicative order of any element of $(\Z/p\Z)^*$ divides $p-1$, we obtain $a_d \equiv -1-2^{-1} \pmod{p}$. Therefore $m \equiv -p-2^{-1} \pmod{p^2}$.
\end{proof}


\begin{theorem} \label{cor:EMCongruences}
{\rm (i).} Any non-trivial solution of the generalized Erd\H{o}s--Moser equation must have $m \equiv 0$ or $4 \pmod{6}$. Furthermore, if $m\equiv4\pmod{5}$, then $n\equiv2\pmod{4}$.\\
{\rm (ii).} Any non-trivial solution of the Erd\H{o}s--Moser equation must have $m \equiv6$ or $10\pmod{18}$. 
\end{theorem}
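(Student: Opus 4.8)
The plan is to reduce everything to two parity facts together with the prime-by-prime information in Theorem \ref{thm: EMresults}. Since $6=2\cdot 3$ and $18=2\cdot 9$, and since an even integer is pinned down mod $18$ once its class mod $9$ is known, the target congruences amount to: $m$ even, $m\not\equiv 2\pmod 3$, plus (for part (ii)) the exact class of $m$ mod $9$. I would first record the needed parities: for any nontrivial solution of \eqref{EQ:GEME}, both $m$ and $n$ are even. For \eqref{EQ:EME} the evenness of $n$ is quick --- if $m$ and $n$ were both odd, then pairing $j$ with $m-j$ gives $S_n(m)\equiv 0\pmod m$ while $(m+1)^n\equiv 1\pmod m$, forcing $m\mid 1$; and $m$ even with $n$ odd collapses to the excluded case $m=2$. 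The evenness of $m$ is a statement at the prime $2$, obtainable from the $2$-adic valuation of Theorem \ref{THM: Sigma(2^d)} (for $m$ odd and $n$ even one would need $v_2(m+1)-1=v_2(S_n(m))=n\,v_2(m+1)$, which is impossible), and for \eqref{EQ:GEME} I would cite Moree's corresponding parity results.

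Granting these, part (i) is short. Since $m$ is even, its class mod $6$ is $0,2,$ or $4$. If $m\equiv 2\pmod 3$ then $3\mid m+1$, so Theorem \ref{thm: EMresults}(i) with $p=3$ gives $p-1=2\nmid n$, contradicting that $n$ is even; hence $m\not\equiv 2\pmod 3$ and $m\equiv 0$ or $4\pmod 6$. For the ``furthermore,'' $m\equiv 4\pmod 5$ means $5\mid m+1$, so Theorem \ref{thm: EMresults}(i) with $p=5$ gives $4\nmid n$, and combined with $n$ even this forces $n\equiv 2\pmod 4$.

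For part (ii) I specialize to \eqref{EQ:EME} and feed the output of part (i) into Theorem \ref{thm: EMresults}(ii) at $p=3$. If $m\equiv 0\pmod 6$ then $3\mid m$, so Theorem \ref{thm: EMresults}(ii) gives $9\mid m+3$, i.e. $m\equiv 6\pmod 9$; since $m$ is even, this determines $m\equiv 6\pmod{18}$. If instead $m\equiv 4\pmod 6$ then $m\equiv 1\pmod 3$, i.e. $3\mid m-\tfrac{p-1}{2}$ with $p=3$, so Theorem \ref{thm: EMresults}(ii) gives $m\equiv -(3+\tfrac12)\equiv 1\pmod 9$ (using $2^{-1}\equiv 5\pmod 9$); since $m$ is even, this determines $m\equiv 10\pmod{18}$. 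These two cases exhaust part (i), completing the proof.

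The main obstacle is the parity input, and within it the evenness of $m$: unlike the constraints mod $3,5,9$, it lives at the prime $2$ and is therefore invisible to Theorem \ref{thm: EMresults}, which only sees odd primes. I expect to supply it either through the $2$-adic valuation of power sums or by citing the classical results; once the parities are in hand, each remaining congruence is a one-line application of Theorem \ref{thm: EMresults}.
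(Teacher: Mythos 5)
Your proof is correct, and for part (ii) it takes a genuinely different route from the paper's. Part (i) is essentially the paper's argument: cite the parity results for \eqref{EQ:GEME}, then apply Theorem \ref{thm: EMresults}(i) at $p=3$; for the ``furthermore'' the paper invokes Corollary \ref{COR: p=3}(ii) rather than Theorem \ref{thm: EMresults}(i) at $p=5$, but these rest on the same underlying valuation result, so the difference is cosmetic. For part (ii), however, the paper never touches Theorem \ref{thm: EMresults}(ii): it applies Corollary \ref{COR: p=3}(i) to \eqref{EQ:EME} to get the valuation identity $v_3(m)+v_3(2m+1)=1+(n-1)v_3(m+1)$, which only narrows $m$ to the four classes $1,3,6,7 \pmod 9$, and then eliminates $3$ and $7$ by citing Moser's square-freeness results (that $2m+3$ and $\frac{m+2}{2}$ are square-free, quoted from Moree's survey) together with a Chinese Remainder Theorem argument, also citing Moree for the evenness of $m$. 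Your route---feeding part (i) into Theorem \ref{thm: EMresults}(ii) at $p=3$, so that $3\mid m$ forces $m\equiv -3\equiv 6\pmod 9$ and $m\equiv 1\pmod 3$ forces $m\equiv -(3+2^{-1})\equiv 1\pmod 9$---lands exactly on the two surviving classes in one step, with no external square-freeness input. Your parity inputs are also sound: the pairing argument for the evenness of $n$ is the standard one, and the count $v_2(m+1)-1=n\,v_2(m+1)$ from Theorem \ref{THM: Sigma(2^d)} (which the paper states but explicitly remarks is not used) correctly rules out odd $m$ in \eqref{EQ:EME}. What the paper's longer route buys is the thematic link to Moser's ``rabbits'' $m$, $\frac{m+2}{2}$, $2m+1$, $2m+3$; what yours buys is a shorter, more self-contained derivation that stays entirely within the paper's own theorems, at the cost of leaning on the prime $2$ via Theorem \ref{THM: Sigma(2^d)} (or, as you note, on the same citations the paper uses).
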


\begin{proof}
(i). By \cite{Moree, Moser} (see also \cite{MS2}), any non-trivial solution of \eqref{EQ:GEME} has $m\equiv n\equiv0\pmod{2}$. Since $n$ is even, Theorem~\ref{thm: EMresults} part (i) implies $m \not\equiv2\pmod{3}$. Hence $m \equiv 0$ or $4 \pmod{6}$, proving the first part of (i). The second part follows from Corollary~\ref{COR: p=3} part (ii).\\
(ii). Since $n$ is even, we can apply Corollary~\ref{COR: p=3} part (i) to equation \eqref{EQ:EME}, yielding
\[
v_3(m(m+1)(2m+1))-1=n  v_3(m+1),
\]
that is,
\begin{equation*} \label{EQ:v_3}
v_3(m) + v_3(2m+1)=1+(n-1)  v_3(m+1).
\end{equation*}
It follows that $m \equiv1,3,6,$ or $7\pmod{9}$.

According to \cite[Equations 6, 10, 12, 13]{MoreeTopHat}, in any solution $(m,n)$ of the Erd\H{o}s--Moser equation, $m,\frac{m+2}{2},2m+1$, and $2m+3$ are all square-free. Also, Moree \cite[Theorem 1]{MoreeOnMoser}, whose $m$ is our $m+1$, showed that our $m \equiv 0 \pmod{2}$. The condition that $2m+3$ is square-free eliminates the case $m \equiv 3 \pmod{9}$. In the case $m \equiv 7 \pmod{9}$, the Chinese Remainder Theorem would imply $m \equiv 34 \pmod{72}$, contradicting the square-freeness of
\[
\frac{m+2}{2} \equiv 18 \pmod{36}.
\]
Therefore $m \equiv 1$ or $6 \pmod{9}$. Since $m$ is even, it follows that $m \equiv6$ or $10\pmod{18}$.
\end{proof}

\section{Egyptian Fraction Equations}

Fix a positive integer $n$. The congruence
\begin{align} \label{EQ:coolNums}
\sum_{p \mid n} \frac{1}{p}+\frac{d}{n} \equiv 1 \pmod{1}
\end{align}
is equivalent to the congruence
\begin{align} \label{EQ:d}
d \equiv - \sum_{p \mid n} \frac{n}{p} \pmod{n}.
\end{align}
In particular, there are always integer solutions~$d$.

\begin{definition}  \label{DEF:d(n)}
We denote one solution of \eqref{EQ:coolNums} by
\begin{align} \label{EQ:d(n)}
d(n):= - \sum_{p \mid n} \frac{n}{p}.
\end{align}
If $n$ is composite and $d(n)\equiv -1 \pmod{n}$, then $n$ is called a {\em Giuga number}.
\end{definition}

In other words, {\em a Giuga number is a composite number $n$ satisfying the Egyptian fraction condition}
\begin{align*} 
\sum_{p \mid n} \frac{1}{p}-\frac{1}{n} \in \N.
\end{align*}
All known Giuga numbers $n$ in fact satisfy the Egyptian fraction {\em equation}
\begin{align*} 
\sum_{p \mid n} \frac{1}{p}-\frac{1}{n} = 1,
\end{align*}
which holds if and only if $d(n)=-1-n$. In that case, we call $n$ a {\em strong Giuga number}.
 The first few (strong) Giuga numbers are \cite{BBBG}, \cite{Me2}, \cite[Sequence A007850]{Sloane}
$$n=30, 858, 1722, 66198, 2214408306, 24423128562, 432749205173838,\dotsc.$$

\begin{definition}
If $n>1$ and $d(n)= 1-n$, then $n$ is called a {\em primary pseudoperfect number}.
\end{definition}

Equivalently, Butske, Jaje, and Mayernik \cite{Butske} {\em define a primary pseudoperfect number to be a solution $n>1$ to the Egyptian fraction equation}
\begin{align*} 
\sum_{p \mid n} \frac{1}{p}+\frac{1}{n} =1.
\end{align*}

It follows from Definition \ref{DEF:d(n)} that {\em if $d(n)\equiv \pm1 \pmod{n}$, then $n$ is square-free}. In particular, {\em all Giuga and primary pseudoperfect numbers are square-free}.

The primary pseudoperfect numbers with $k\le8$ (distinct) prime factors are \cite[Table~1]{Butske}, \cite[Sequence A054377]{Sloane}
$$n_k=2, 6, 42, 1806, 47058, 2214502422, 52495396602, 8490421583559688410706771261086.$$
Each $n_k$ has exactly $k$ (distinct) prime factors, $k=1,2,3,4,5,6,7,8$. Moreover, the $n_k$ are the only known solutions to the congruence $d(n)\equiv 1 \pmod{n}$.

In some cases the next result can be used to generate new Giuga and primary pseudoperfect numbers from given ones. Part (i) is from \cite{wiki} and part (iii) is a special case of Brenton and Hill \cite[Proposition 12]{BH} (see also \cite[Lemma~4.1]{Butske}).

\begin{theorem} \label{PROP:pppGiuga}
{\rm(i).} Assume $n+1$ is an odd prime. Then $n$ is a primary pseudoperfect number if and only if $n(n+1)$~is also a primary pseudoperfect number.\\
{\rm(ii).} Assume $n-1$ is a prime. Then $n$~is a primary pseudoperfect number if and only if $n(n-1)$ is a strong Giuga number.\\
{\rm(iii).} Assume $n^2+1=FG$, where $n+F$ and $n+G$ are prime. Then $n$ is a primary pseudoperfect number if and only if $n(n+F)(n+G)$ is also a primary pseudoperfect number.\\
{\rm(iv).} Assume $n^2-1=FG$, where $n+F$ and $n+G$ are prime. Then $n$ is a primary pseudoperfect number if and only if $n(n+F)(n+G)$ is a strong Giuga number.
\end{theorem}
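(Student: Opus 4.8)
The plan is to work throughout with the function $d(n) = -\sum_{p\mid n} n/p$ from Definition~\ref{DEF:d(n)}, translating each biconditional into a statement about the value of $d$ on a product of integers. The entire theorem rests on how $d$ behaves under multiplication by a new prime factor, so first I would establish the key algebraic lemma: if $N$ is square-free and $\pi$ is a prime not dividing $N$, then in the expansion of $d(\pi N) = -\sum_{p\mid \pi N}\pi N/p$ the sum splits as $d(\pi N) = \pi\, d(N) - N$, since the terms where $p\mid N$ contribute $\pi$ times the corresponding terms of $d(N)$ and the new term $p=\pi$ contributes $-\pi N/\pi = -N$. More generally, for a product $\pi_1\pi_2 N$ of two new primes times a square-free $N$, I would iterate this to get $d(\pi_1\pi_2 N) = \pi_1\pi_2\, d(N) - \pi_2 N - \pi_1 N$. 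These two identities, together with the characterizations ``$n$ is primary pseudoperfect iff $d(n)=1-n$'' and ``$n$ is strong Giuga iff $d(n) = -1-n$,'' reduce every part to elementary arithmetic.

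For part (i), with $N = n$ and $\pi = n+1$ prime, the splitting lemma gives $d(n(n+1)) = (n+1)d(n) - n$. Assuming $d(n) = 1-n$ yields $d(n(n+1)) = (n+1)(1-n) - n = 1 - n^2 - n = 1 - n(n+1)$, which is exactly the primary-pseudoperfect condition for $n(n+1)$; the converse follows by running the same linear identity backwards (the map $x\mapsto (n+1)x - n$ is a bijection, so $d(n(n+1)) = 1-n(n+1)$ forces $d(n)=1-n$). Part (ii) is identical in structure but with target value: setting $\pi = n-1$, I get $d(n(n-1)) = (n-1)d(n) - n$, and substituting $d(n)=1-n$ gives $(n-1)(1-n) - n = -n^2 + n - 1 = -1 - n(n-1)$, the strong-Giuga condition. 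In both parts one must also verify the square-freeness hypotheses needed to invoke the splitting lemma: $n$ itself must be square-free, which holds because a primary pseudoperfect number satisfies $d(n)\equiv 1\pmod n$ and hence is square-free as noted in the excerpt, and $\pi = n\pm1$ is automatically coprime to $n$.

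Parts (iii) and (iv) use the two-prime identity. Here I set the two new primes to be $\pi_1 = n+F$ and $\pi_2 = n+G$, where $FG = n^2 \pm 1$, and I must check they do not divide $n$: since $(n+F)(n+G) = n^2 + (F+G)n + FG = n^2 + (F+G)n + (n^2\pm1)$, any common factor with $n$ would divide $FG = n^2\pm1 \equiv \pm1 \pmod n$, so coprimality is immediate. Assuming $d(n) = 1-n$, the two-prime identity gives
\[
d\bigl(n(n+F)(n+G)\bigr) = (n+F)(n+G)(1-n) - (n+G)n - (n+F)n.
\]
The main computational obstacle, and the only real work in the proof, is simplifying this expression and showing it equals $1 - n(n+F)(n+G)$ in case (iii) (using $FG = n^2+1$) and $-1 - n(n+F)(n+G)$ in case (iv) (using $FG = n^2-1$); the two cases differ precisely by the $\pm1$ in $FG$, which is what flips the target constant between $+1$ and $-1$ and thereby between primary-pseudoperfect and strong-Giuga. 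I would expand $(n+F)(n+G) = n^2 + (F+G)n + FG$, substitute the value of $FG$, and collect terms, expecting the $(F+G)$ contributions to cancel against the two correction terms and the cubic in $n$ to close up. As before, each biconditional is obtained in both directions from the fact that the relevant affine map in $d(n)$ is invertible, so no separate converse argument is needed beyond noting this bijectivity.
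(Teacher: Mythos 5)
Your proof is correct in substance and, despite the different packaging, is essentially the paper's own argument: the paper manipulates the Egyptian fractions directly, using that $\sum_{p\mid n}\frac1p+\frac1n=\sum_{p\mid n(n\pm1)}\frac1p\pm\frac1{n(n\pm1)}$ when $n\pm1$ is a prime not dividing $n$, and this is exactly your identity $d(\pi N)=\pi d(N)-N$ after multiplying through by $-n(n\pm1)$; likewise both treatments of (iii)/(iv) come down to the same identity $n(n+F)+n(n+G)\pm1=(n+F)(n+G)$. Your parts (i) and (ii) are complete, and the computation you outline for (iii)/(iv) does close up exactly as you predict.

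There is, however, one step you need and never verify: in (iii) and (iv) the primes $n+F$ and $n+G$ must be \emph{distinct}. Your two-prime identity $d(\pi_1\pi_2N)=\pi_1\pi_2\,d(N)-\pi_1N-\pi_2N$ comes from iterating the one-prime lemma, which requires $\pi_1\nmid\pi_2N$; if $\pi_1=\pi_2=\pi$ the identity is false---in that case $d(\pi^2N)=\pi^2d(N)-\pi N$, the correction term appearing once rather than twice---and the computation would not produce the stated value. The theorem's hypotheses do not assume $F\neq G$, so this must be proved; the paper does it by observing that $F=G$ would force $n^2\pm1=F^2$, which has no solutions in positive integers. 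A second, smaller wrinkle: you stated your splitting lemma with a square-freeness hypothesis on $N$, but in your converse directions (deducing $d(n)=1-n$ from the value of $d$ on the product via invertibility of the affine map) you must invoke the identity \emph{before} knowing $n$ is primary pseudoperfect, which is the only justification of square-freeness you give. The clean fix is to note that square-freeness is never needed---your lemma is just the coprime Leibnitz rule of Theorem~\ref{thm: LeibnitzRule}(ii) together with $d(\pi)=-1$; alternatively, observe that if the product is primary pseudoperfect or strong Giuga then it, and hence $n$, is automatically square-free.
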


\begin{proof}
In the proof of (i), (ii), take all $\pm$ signs to be $+$, or all to be $-$, and likewise in the proof of (iii), (iv).\\
(i), (ii). We can write
\begin{align*}
\displaystyle \sum_{p \mid n} \frac{1}{p}+\frac{1}{n} = \sum_{p \mid n} \frac{1}{p}+\frac{1}{n\pm1}+\left(\frac{1}{n}-\frac{1}{n\pm1}\right)= \sum_{p \mid n(n\pm1)} \frac{1}{p} \pm \frac{1}{n(n\pm1)},
 \end{align*}
as $n\pm1$ is prime. This implies (i) and (ii).\\
(iii), (iv). Since $n^2\pm1=f^2$ has no solutions in positive integers, the primes $n+F$ and $n+G$ are distinct. Setting $M:=n(n+F)(n+G)$, we therefore have
\begin{align*}
\sum_{p \mid M} \frac{1}{p} \pm\frac{1}{M}&=\sum_{p \mid n}\frac{1}{p} + \frac{1}{n+F}+ \frac{1}{n+G}\pm \frac{1}{M}=\sum_{p \mid n}\frac{1}{p} + \frac{n(n+F)+n(n+G)\pm1}{M}\\
&=\sum_{p \mid n} \frac{1}{p} + \frac{1}{n},
\end{align*}
because $n^2\pm1=FG$ implies $n(n+F)+n(n+G)\pm1=(n+F)(n+G)$. This proves (iii) and (iv).
\end{proof}

\begin{example} \label{EX:N+/-1}
For examples of (i), let $n$ be one of the four primary pseudoperfect numbers$$2,\quad 6=2\cdot3,\quad 42=2\cdot3\cdot7,\quad 47058=2\cdot3\cdot11\cdot23\cdot31.$$Then the primes $n+1 =3, 7, 43,47059$ yield the primary pseudoperfect numbers
$$n(n+1) = 6, 42, 1806, 2214502422$$

For (ii), if $n=6, 42,$ or $ 47058$, then $n-1=5,41,$ or $47057$ is prime, and the products
$$n(n-1) = 30, 1722, 2214408306$$
are strong Giuga numbers.
\end{example}

Notice here the three pairs of twin primes$$(n-1,n+1)=(5,7), (41,43), (47057,47059).$$Is this more than just a coincidence? In other words:

\begin{question}
Let $n>2$ be a primary pseudoperfect number. Is $n-1$ prime if and only if $n+1$ is prime? Equivalently (by Theorem~\ref{PROP:pppGiuga}), is $n(n-1)$ a strong Giuga number if and only if $n(n+1)$ is a primary pseudoperfect number?
\end{question}

\begin{example}
The only known example of Theorem \ref{PROP:pppGiuga} part (iii) begins with the primary pseudoperfect number
$$n_6= 2214502422=2\cdot3\cdot11\cdot23\cdot31\cdot47059.$$
Factoring
$$n_6^2+1=4904020977043866085=2839805 \cdot1726886521097=:F^+\cdot G^+$$
leads to the primes $n_6+F^+$ and $n_6+G^+$ and then to the largest known primary pseudoperfect number
\begin{align*}
n_8=n_6(n_6+F^+)(n_6+G^+)&=2\cdot3\cdot11\cdot23\cdot31\cdot47059\cdot2217342227\cdot1729101023519\\
&=8490421583559688410706771261086.
\end{align*}

The number $n_6$ also provides an example of (iv). Namely, the factorization
$$n_6^2-1=4904020977043866083=45193927\cdot 108510618629=:F^-\cdot G^-$$
yields the primes $n_6+F^-$ and $n_6+G^-$ and hence the strong Giuga number
\begin{align*}
n_6(n_6+F^-)(n_6+G^-)&=2\cdot3\cdot11\cdot23\cdot31\cdot47059 \cdot 2259696349 \cdot 110725121051\\
&=554079914617070801288578559178.
\end{align*}

Another example of (iv) begins with $n_8$ and ends with the largest known (strong) Giuga number
{\Tiny
\begin{align*}
&2\cdot3\cdot11\cdot23\cdot31\cdot47059\cdot2217342227\cdot1729101023519\cdot58254480569119734123\cdot8491659218261819498490029296021\\
&=4200017949707747062038711509670656632404195753751630609228764416142557211582098432545190323474818
541298976556403,
\end{align*}
}discovered by R. Girgensohn \cite{BBBG}.
\end{example}

\begin{proposition} \label{coolEquationEquivalentConditions}
An ordered pair $(n,d)$ is a solution to the congruence \eqref{EQ:coolNums} if and only if
\begin{equation}
p\mid n \implies d \equiv -\frac{n}{p} \pmod{{p}^{v_p(n)}}. \label{EQ:coolEEC}
\end{equation}
In that case, let $p$ be a prime factor of $n$ and $e\in \N$. Then $p^e$ divides~$n$ if and only if $p^{e-1}$ divides~$d$. In particular, $n$ is square-free if and only if $n$ and $d$ are coprime.
\end{proposition}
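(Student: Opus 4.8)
The plan is to reduce everything to a single valuation computation via the Chinese Remainder Theorem. Recall from the text that congruence \eqref{EQ:coolNums} is equivalent to \eqref{EQ:d}, namely $d \equiv -\sum_{\ell \mid n} n/\ell \pmod{n}$, where $\ell$ ranges over the primes dividing $n$. Since $n = \prod_{\ell \mid n} \ell^{v_\ell(n)}$, the CRT says that a congruence modulo $n$ holds if and only if it holds modulo $\ell^{v_\ell(n)}$ for each prime $\ell \mid n$. So I would first rewrite \eqref{EQ:d} as the system of congruences $d \equiv -\sum_{\ell \mid n} n/\ell \pmod{p^{v_p(n)}}$, one for each prime $p \mid n$.

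The crux is then to simplify the right-hand side modulo a fixed prime power $p^{v_p(n)}$. Writing $v = v_p(n)$, I would observe that the term $\ell = p$ contributes $n/p$, which has $p$-adic order exactly $v-1$, while every term with $\ell \neq p$ still retains the full factor $p^{v}$ and so vanishes modulo $p^{v}$. Hence $\sum_{\ell \mid n} n/\ell \equiv n/p \pmod{p^{v}}$, and the system collapses to $d \equiv -n/p \pmod{p^{v_p(n)}}$ for every prime $p \mid n$, which is exactly \eqref{EQ:coolEEC}. This proves the asserted equivalence.

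For the divisibility statement, I would extract the order of $d$ directly from \eqref{EQ:coolEEC}. Fix a prime $p \mid n$ and set $v = v_p(n)$. Since $d \equiv -n/p \pmod{p^{v}}$ and $v_p(-n/p) = v - 1 < v$, the non-archimedean (ultrametric) property of $v_p$ forces $v_p(d) = v - 1$; that is, $v_p(d) = v_p(n) - 1$ for each prime dividing $n$. Consequently, for $e \in \N$ we have $p^e \mid n$ iff $e \le v_p(n)$ iff $e - 1 \le v_p(d)$ iff $p^{e-1} \mid d$, which is the claimed equivalence. The square-free assertion is the special case $e = 2$ together with the fact that primes not dividing $n$ are irrelevant to both $\gcd(n,d)$ and the square-freeness of $n$: indeed $n$ is square-free iff no $p^2$ divides $n$ iff (for $p \mid n$) no $p$ divides $d$, i.e. iff $\gcd(n,d) = 1$.

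I would expect the only genuinely delicate point to be the valuation identity $\sum_{\ell \mid n} n/\ell \equiv n/p \pmod{p^{v_p(n)}}$, specifically making explicit that dividing $n$ by a prime $\ell \neq p$ removes no factors of $p$, so that exactly one summand survives modulo $p^{v_p(n)}$. Everything else is routine bookkeeping with the totally additive order $v_p$ and the elementary step $v_p(a+b) = \min(v_p(a),v_p(b))$ when $v_p(a) \neq v_p(b)$.
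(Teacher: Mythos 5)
Your proof is correct and takes essentially the same route as the paper's, which compresses the whole argument into two sentences: reduce \eqref{EQ:d} modulo each $p^{v_p(n)}$ (where only the $\ell=p$ term survives) for the forward direction, and invoke the Chinese Remainder Theorem for the converse. Your explicit valuation step $v_p(d)=v_p(n)-1$ just spells out the divisibility and square-freeness consequences that the paper leaves implicit in ``we infer the proposition.''
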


\begin{proof}
If $(n,d)$ is a solution, then \eqref{EQ:d} reduced modulo $p^{v_p(n)}$ implies \eqref{EQ:coolEEC}. The converse follows from the Chinese Remainder Theorem, and we infer the proposition.
\end{proof}


The next theorem gives three properties of the function $n \mapsto d(n)$. The first is a power rule. The second shows that the function $n \mapsto d(n)$ satisfies Leibnitz's product rule, but only on coprime integers; in other words, it is ``Leibnitzian,'' but not ``totally Leibnitzian.'' The third is an analog of the quotient rule.

\begin{theorem} \label{thm: LeibnitzRule}
{\rm(i).} For $k,n \in \N$, we have $d(n^k)=n^{k-1} d(n)$.\\
{\rm(ii).} Given $M,n \in \N$, denote their greatest common divisor by $G:=\gcd(M,n)$ and their least common multiple by $L:=\lcm(M,n)$. Then 
\[
d(Mn)=Md(n)+n d(M)-L d(G).
\]
In particular,
\[
\gcd(M,n)=1 \implies d(Mn)=Md(n)+n d(M).
\]
{\rm(iii).} Let $a$ and $b$ be positive integers with $b \mid a$. Set $\gamma :=\gcd(b,a/b)$. Then
\[
d\left(\frac{a}{b}\right)=\frac{bd(a) -a d(b)}{b^2}+\frac{a/b}{\gamma}d(\gamma ).
\]
In particular, when $\gamma =1$ we have the standard quotient rule.
\end{theorem}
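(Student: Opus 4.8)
The plan is to factor the prime‑reciprocal sum out of $d$ and to exploit the fact that the summation in $d(n)=-\sum_{p\mid n}n/p$ ranges only over the \emph{distinct} primes dividing $n$. Writing
\[
d(n)=-n\sum_{p\mid n}\frac1p,
\]
I would introduce the auxiliary function $s(n):=\sum_{p\mid n}1/p$, so that $d(n)=-n\,s(n)$. The decisive feature is that $s(n)$ depends only on the \emph{set} of prime divisors of $n$, not on their multiplicities. All three parts then reduce to elementary manipulations of $s$ together with the standard identity $Mn=\gcd(M,n)\lcm(M,n)$.

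For part (i), since $n^k$ and $n$ have exactly the same prime divisors, $s(n^k)=s(n)$, whence $d(n^k)=-n^k s(n^k)=-n^k s(n)=n^{k-1}\bigl(-n\,s(n)\bigr)=n^{k-1}d(n)$. For part (ii), the set of primes dividing $Mn$ is the union of those dividing $M$ and those dividing $n$, while the primes dividing both are precisely those dividing $G=\gcd(M,n)$; inclusion–exclusion gives $s(Mn)=s(M)+s(n)-s(G)$. Multiplying by $-Mn$ and distributing,
\[
d(Mn)=-Mn\,s(M)-Mn\,s(n)+Mn\,s(G)=n\,d(M)+M\,d(n)+Mn\,s(G),
\]
where the first two terms are recognized as $n\,d(M)$ and $M\,d(n)$ by pulling $-M$ and $-n$ respectively into the sums. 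The one step requiring care is the last term: using $Mn=GL$ with $L=\lcm(M,n)$, I rewrite $Mn\,s(G)=L\,(G\,s(G))=-L\,d(G)$, which produces the stated formula; taking $G=1$ (so $s(G)=0$) recovers the coprime case.

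Part (iii) is then a direct consequence of (ii). Applying part (ii) to the factorization $a=b\cdot(a/b)$, for which $\gcd(b,a/b)=\gamma$ and $\lcm(b,a/b)=a/\gamma$, gives
\[
d(a)=b\,d(a/b)+(a/b)\,d(b)-\frac{a}{\gamma}\,d(\gamma).
\]
Solving this linear relation for $d(a/b)$ and simplifying via $\tfrac{a/b}{b}=\tfrac{a}{b^2}$ and $\tfrac{a}{b\gamma}=\tfrac{a/b}{\gamma}$ yields exactly the claimed quotient rule, with the standard form appearing when $\gamma=1$.

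The main (and essentially only) obstacle is the bookkeeping in part (ii): correctly matching the shared primes of $M$ and $n$ with the prime divisors of $G$, and then converting the term $Mn\,s(G)$ into $-L\,d(G)$ through the identity $Mn=GL$. Once that step is in place, parts (i) and (iii) are purely formal, (i) from the invariance $s(n^k)=s(n)$ and (iii) from rearranging the special case of (ii).
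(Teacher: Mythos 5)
Your proposal is correct and follows essentially the same route as the paper: both arguments rest on the inclusion--exclusion identity $\sum_{p\mid Mn}1/p=\sum_{p\mid M}1/p+\sum_{p\mid n}1/p-\sum_{p\mid G}1/p$ multiplied through by $-Mn$, with the final term converted via $Mn=GL$, and both deduce (i) from invariance of the prime set under powers and (iii) by applying (ii) to $a=b\cdot(a/b)$ and solving for $d(a/b)$. Introducing the auxiliary function $s(n)$ is only a notational repackaging of what the paper does directly with the reciprocal sums.
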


\begin{proof}
{\rm(i).} By Definition \ref{DEF:d(n)},
\[
d(n^k)=-\sum_{p\mid n^k} \frac{n^k}{p}=-n^{k-1}\sum_{p\mid n} \frac{n}{p} =n^{k-1} d(n).
\]
{\rm(ii).} Since $G=\gcd(M,n)$,
\[
\sum_{p\mid Mn} \frac{1}{p} =\sum_{p\mid M} \frac{1}{p} + \sum_{p\mid n} \frac{1}{p}-\sum_{p\mid G} \frac{1}{p}.
\]
Multiplying through by $-Mn$, we write the result as
\begin{align*}
d(Mn)=-\sum_{p\mid Mn} \frac{Mn}{p} &=-n\sum_{p\mid M} \frac{M}{p}-M \sum_{p\mid n} \frac{n}{p}+\frac{Mn}{G}  \sum_{p\mid G}\frac{G}{p}.
\end{align*}
Since $L=Mn/G$, the first conclusion follows. If $G=1$, then $\sum_{p\mid G} \frac{1}{p}=0$, and we get the product rule.\\
{\rm(iii).} By part (ii),
\[
d(a)=d\left(b \frac{a}{b}\right)=\frac{a}{b} d(b)+b d\left(\frac{a}{b}\right)-\frac{a}{\gamma} d(\gamma).
\]
Dividing by $b$ and solving for $d(\frac{a}{b})$ yields (iii).
\end{proof}

For a prime $p$, Definition \ref{DEF:d(n)} gives
\begin{align} \label{EQ:d(p)}
d(p)=- \frac{p}{p}=-1
\end{align}
On the other hand, the \emph{arithmetic derivative} \cite{Barbeau, Churchill, UfnarovskiAhlander} of $p$ is defined as $p'=1$, and that of a product $ab$ is defined as $(ab)'=ab'+ba'$. (Also, $0':=1':=0.$) Thus, for square-free $n>1$, both $d(n)$ and the arithmetic derivative $n'$ can be calculated by applying Leibnitz's product rule to the prime factorization of~$n$. Therefore, 
\begin{equation} \label{EQ:d and N'}
n>1\ \text{\it square-free}\ \implies d(n)=-n'.
\end{equation}

In 2010 Lava \cite[p. 129]{BL} conjectured that Giuga numbers are the solutions of the differential equation $n'=n+1$. Grau and Oller-Marc\'en \cite{GO} proved in 2011 that Giuga numbers are the solutions of the differential equation $n'=an+1$, with $a\in\N$.

The following result shows that if $k$ and $n$ are Giuga numbers or primes, then the product $kn$ cannot be a Giuga number, and that the product of two primary pseudoperfect numbers cannot be another one. (In contrast, the product of a primary pseudoperfect number and a prime can be either a primary pseudoperfect number, e.g., $6 \cdot 7 = 42$, or a Giuga number, e.g., $6 \cdot 5 = 30$, or neither, e.g., $6 \cdot 11 = 66$---compare Theorem~\ref{PROP:pppGiuga}.)

\begin{theorem} \label{THM:never}
The product of two integers each of which is either a Giuga number or a prime is never a Giuga number, and the product of two primary pseudoperfect numbers is never a primary pseudoperfect number.
\end{theorem}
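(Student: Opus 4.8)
The plan is to treat each of the two assertions by splitting into the case where the two factors are coprime and the case where they share a common prime, and to dispatch the latter case immediately using the fact, recorded just after Definition~\ref{DEF:d(n)}, that $d(n)\equiv\pm1\pmod n$ forces $n$ to be square-free. Indeed, every Giuga number, every prime, and every primary pseudoperfect number is square-free; so if two such factors share a prime~$p$, their product is divisible by $p^2$ and is therefore neither a Giuga number nor a primary pseudoperfect number. This disposes of all the non-coprime cases at once (including the case of two equal primes, whose product is $p^2$), and reduces both assertions to the coprime situation, where Theorem~\ref{thm: LeibnitzRule}(ii) supplies the clean product rule $d(kn)=k\,d(n)+n\,d(k)$.

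For the first assertion, let $k$ and $n$ each be a Giuga number or a prime, and assume $\gcd(k,n)=1$. The key observation is that in either case $d(k)\equiv-1\pmod k$ and $d(n)\equiv-1\pmod n$: for a Giuga number this is the defining congruence, and for a prime it follows from \eqref{EQ:d(p)}. Suppose for contradiction that the (composite) product $kn$ is a Giuga number, so $d(kn)\equiv-1\pmod{kn}$. Since $n\mid kn$, reducing the product rule modulo $n$ gives $-1\equiv d(kn)\equiv k\,d(n)\equiv -k\pmod n$, hence $k\equiv1\pmod n$; as $k>1$ this forces $k\ge n+1$. The symmetric argument modulo $k$ yields $n\ge k+1$, a contradiction. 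Thus $kn$ is not a Giuga number.

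For the second assertion, let $M$ and $N$ be primary pseudoperfect, so $d(M)=1-M$ and $d(N)=1-N$, and assume $\gcd(M,N)=1$. Here the exact product rule is needed rather than a congruence: substituting into $d(MN)=M\,d(N)+N\,d(M)$ yields $d(MN)=M(1-N)+N(1-M)=M+N-2MN$. If $MN$ were primary pseudoperfect we would have $d(MN)=1-MN$, and equating the two expressions gives $M+N-MN=1$, that is, $(M-1)(N-1)=0$. Since $M,N>1$ this is impossible, so $MN$ is not primary pseudoperfect.

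The argument is essentially a two-line computation once the right viewpoint is fixed, so the only real obstacle is organizational: recognizing that the full Leibnitzian rule with its correction term $-L\,d(G)$ is never needed, because square-freeness eliminates every case in which $\gcd(M,n)>1$, leaving only the coprime product rule. The one point worth stating carefully is the uniform remark that a prime $p$ satisfies $d(p)\equiv-1\pmod p$ exactly as a Giuga number does, which is what allows the two kinds of factor to be handled together in the first assertion.
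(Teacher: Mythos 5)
Your proof is correct, and its core coincides with the paper's: both arguments rest on the coprime product rule of Theorem~\ref{thm: LeibnitzRule} part (ii) applied to the defining congruences $d(\cdot)\equiv\pm1$. The paper handles the two assertions uniformly: for coprime $M,n>1$ with $d(M)\equiv\epsilon\pmod{M}$ and $d(n)\equiv\epsilon\pmod{n}$, where $\epsilon=\pm1$, it computes $d(Mn)=Md(n)+nd(M)\equiv\epsilon(M+n)\pmod{Mn}$ and observes that $\epsilon(M+n)\equiv\epsilon\pmod{Mn}$ is impossible since $0<M+n-1<Mn$ when $M,n>1$. You instead run two slightly different endgames---reduction modulo each factor separately, yielding the symmetric inequalities $k\ge n+1$ and $n\ge k+1$ in the Giuga case, and the exact identity $(M-1)(N-1)=0$ in the pseudoperfect case (legitimate, since the paper defines primary pseudoperfect numbers by the exact equation $d(n)=1-n$). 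Both endgames are sound, though the paper's single congruence modulo $Mn$ is a bit shorter and treats the two assertions at once. One genuine point in your favor: you explicitly dispose of non-coprime factors via the square-freeness of Giuga and primary pseudoperfect numbers, a case the paper's proof passes over in silence (it only treats coprime $M,n$), so your write-up is in that respect the more complete one.
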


\begin{proof}
We show more generally that, {\it if $M>1$ and $n>1$ are coprime integers satisfying $d(M)\equiv \epsilon \pmod{M}$ and $d(n)\equiv \epsilon \pmod{n}$, where $\epsilon=\pm1$, then} $d(Mn)\not\equiv \epsilon \pmod{Mn}$. Indeed, Theorem \ref{thm: LeibnitzRule} part (ii) gives
\[
d(Mn)=Md(n)+nd(M)\equiv \epsilon (M+n)\pmod{Mn}
\]
and it follows that the congruence $d(Mn)\equiv \epsilon \pmod{Mn}$ holds only if $M =1$ or $n =1$, a~contradiction.
\end{proof}

\begin{proposition} \label{SubsetsOfPrimesEgyptian} Given a positive integer $n$, let $P$ be the set of its distinct prime divisors, and let $Q$ and $R$ be subsets of $P$ satisfying $Q \cup R = P$ and $Q \cap R = \emptyset$. Suppose that $(n,d_Q)$ and $(n,d_R)$ satisfy the congruences
\[
\sum_{ p \in Q} \frac{1}{p}+\frac{d_Q}{n} \equiv 1\equiv  \sum_{ p \in R} \frac{1}{p}+\frac{d_R}{n}  \pmod{1}.
\]
Then $d_Q$ and $d_R$ are related by $d_Q+d_R=d$, where $(n,d)$ is a solution to congruence \eqref{EQ:coolNums}. 
\end{proposition}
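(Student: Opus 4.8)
The plan is to add the two hypothesized congruences and exploit the fact that $Q$ and $R$ form a partition of the prime-divisor set $P$. First I would record the only structural observation needed: because $Q \cup R = P$, $Q \cap R = \emptyset$, and $P$ is exactly the set of distinct primes dividing $n$, the two partial reciprocal sums reassemble into the complete sum with no prime omitted or double-counted, namely $\sum_{p \in Q} \frac1p + \sum_{p \in R} \frac1p = \sum_{p \in P} \frac1p = \sum_{p \mid n} \frac1p$.

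Next I would add the two given congruences termwise. Each side of the hypothesis is congruent to $1 \pmod 1$, so their sum yields
\[
\left(\sum_{p \in Q} \frac1p + \sum_{p \in R} \frac1p\right) + \frac{d_Q + d_R}{n} \equiv 2 \pmod 1.
\]
Substituting the identity from the first step collapses the bracketed expression to $\sum_{p \mid n} \frac1p$, and since $2$ and $1$ differ by an integer we have $2 \equiv 1 \pmod 1$. Hence
\[
\sum_{p \mid n} \frac1p + \frac{d_Q + d_R}{n} \equiv 1 \pmod 1,
\]
which is precisely congruence \eqref{EQ:coolNums} with $d$ replaced by $d_Q + d_R$.

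Finally, setting $d := d_Q + d_R$, the displayed congruence says exactly that $(n,d)$ solves \eqref{EQ:coolNums}, and by construction $d_Q + d_R = d$, as claimed. The argument is entirely formal; the only point requiring any care is that the partition hypothesis $Q \cup R = P$, $Q \cap R = \emptyset$ is what makes the two partial reciprocal sums recombine into the full sum $\sum_{p \mid n} \frac1p$ appearing in \eqref{EQ:coolNums}. There is no substantive obstacle beyond this bookkeeping and the trivial reduction $2 \equiv 1 \pmod 1$.
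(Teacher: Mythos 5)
Your proof is correct and takes essentially the same approach as the paper: both add the two hypothesized congruences and use the partition $Q \sqcup R = P$ to recombine the partial sums into $\sum_{p \mid n} \frac{1}{p}$. The only cosmetic difference is that the paper first clears denominators and works with the equivalent congruence $d \equiv -\sum_{p \mid n} \frac{n}{p} \pmod{n}$, whereas you remain in the $\bmod\ 1$ formulation throughout.
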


\begin{proof}
We have
$$d=d_Q+d_R \equiv -\sum_{ p \in Q} \frac{n}{p}- \sum_{ p \in R} \frac{n}{p}\equiv - \sum_{ p \in P} \frac{n}{p}\equiv - \sum_{ p \mid n} \frac{n}{p} \pmod{n}$$
and the result follows.
\end{proof}

An interesting variation on the Egyptian fraction equation (\ref{EQ:coolNums}) is obtained by replacing the integers in the definition with polynomials having integer coefficients. Let $n(x)=p_1(x) p_2(x) \cdots p_m(x)\in{\Z}[x]$, with $p_i(x) \in {\Z}[x]$ primitive and irreducible in ${\Q}[x]$ for each $i$. From now on, we will assume that polynomials denoted by $p(x)$ are prime in this sense. We seek $d(x) \in \Z[x]$ such that
\begin{align} \label{EQ:coolPolynomials}
\sum_{p(x) \mid n(x)} \frac{1}{p(x)}+\frac{d(x)}{n(x)} \equiv 1 \pmod{1}.
\end{align}
As before, solutions are given by
\[
d(x) \equiv -\sum_{p(x) \mid n(x)} \frac{n(x)}{p(x)} \pmod{n(x)}.
\]

\begin{example}
Take $n(x)=p_1(x) p_2(x)p_3(x)$, where the polynomials $p_1(x)=x, p_2(x)=-2x+1$ and $p_3(x)=-2x-1$ are prime. Then
\[
\frac{1}{x}+\frac{1}{-2x+1}+\frac{1}{-2x-1}+\frac{d(x)}{x(-2x+1)(-2x-1)}=
\frac{-1+d(x)}{x(-2x+1)(-2x-1)}.
\]
Consequently, $d(x) \equiv 1 \pmod{n(x)}$ is a solution to \eqref{EQ:coolPolynomials}. Thus, taking $x=p$ for some prime $p \in \Z$, if $-2p+1$ and $-2p-1$ are also prime, then $n(p)$ satisfies an equation akin to that of a primary pseudoperfect number, although the primes may be negative. For instance, we may take $p=19$, $-2p+1=-37$ and $-2p-1=-39$ to conclude that the number $27417=19\times-37 \times - 39$ is almost primary pseudoperfect:
\[
\frac{1}{19}+\frac{1}{-37}+\frac{1}{-39}+\frac{1}{27417}=0.
\]
\end{example}

To prove the square-freeness of $m,\frac{m+2}{2},2m+1,$ and $2m+3$, Moser \cite{Moser} showed that if $(m,n)$ is a solution of the Erd\H{o}s--Moser equation, then $(m,1)$, $(m+2,2)$, $(2m+1,2)$ and $(2m+3,4)$ are  solutions $(n,d)$ to the congruence~\eqref{EQ:coolNums}. We now aim to find an additional solution of the form $(n,d)=(m-1,x)$.

We employ the {\em Carlitz-von Staudt Theorem} \cite[Theorem~4]{Carlitz}, as corrected by Moree \cite[Theorem~3]{MoreeTopHat}.

\begin{theorem}[Carlitz-von Staudt]\label{thm: CarlitzVS} Let $n$ and $m$ be positive integers. Then
\[
S_n(m) \equiv \begin{cases} 
                                          \displaystyle-\!\!\!\sum_{\substack{p \mid m+1,\\ p-1 \mid n}} \dfrac{m+1}{p} \pmod{m+1}\quad\text{ if $n$ is even},\\
                                           0\pmod{m(m+1)/2} \qquad\quad\quad \text{if $n$ is odd}.
                                         \end{cases}
\]
\end{theorem}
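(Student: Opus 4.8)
The plan is to split on the parity of $n$, since the prime $p=2$ behaves differently in the two cases, and this is exactly why the theorem states something sharper but different for odd $n$.

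For odd $n$ I would prove the stronger divisibility $\frac{m(m+1)}{2}\mid S_n(m)$ by two pairings. Pairing $j$ with $m+1-j$ gives $j^n+(m+1-j)^n\equiv j^n+(-j)^n=0\pmod{m+1}$ because $n$ is odd; summing over $j=1,\dots,m$ yields $2S_n(m)\equiv 0\pmod{m+1}$. Pairing $j$ with $m-j$ for $1\le j\le m-1$ and noting $m^n\equiv 0\pmod m$ gives $2S_n(m)\equiv 0\pmod m$ in the same way. Since $\gcd(m,m+1)=1$ we get $m(m+1)\mid 2S_n(m)$, and because $m(m+1)$ is even we may divide by $2$ to conclude $\frac{m(m+1)}{2}\mid S_n(m)$, which is the odd case.

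For even $n$, set $N=m+1$, so that $S_n(m)=\sum_{j=0}^{N-1}j^n$ (the term $j=0$ contributes $0$). By the Chinese Remainder Theorem it suffices to evaluate this sum modulo each prime power $p^a$ with $p^a\parallel N$. Writing $N=p^aM$ with $p\nmid M$, each residue class modulo $p^a$ is attained exactly $M$ times as $j$ runs over $0,\dots,N-1$, whence $\sum_{j=0}^{N-1}j^n\equiv M\sum_{r=0}^{p^a-1}r^n\pmod{p^a}$. The inner sum equals $S_n(p^a-1)\equiv S_n(p^a)\pmod{p^a}$, and for odd $p$ Corollary~\ref{PowerSumporder} (with $q=1$, $d=a$) gives $S_n(p^a)\equiv -p^{a-1}$ when $p-1\mid n$ and $\equiv 0$ otherwise. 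Hence $\sum_{j=0}^{N-1}j^n\equiv -Mp^{a-1}=-N/p$ or $0\pmod{p^a}$ accordingly.

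The main obstacle is the prime $p=2$, where Corollary~\ref{PowerSumporder} does not apply (and where $p-1=1\mid n$ always holds). Here I would show directly that $F(a):=\sum_{r=0}^{2^a-1}r^n\equiv -2^{a-1}\pmod{2^a}$ for even $n$, by induction on $a$. Splitting $[0,2^{a+1})$ into $[0,2^a)$ and its translate by $2^a$, the expansion $(r+2^a)^n\equiv r^n+n\,2^a r^{n-1}\pmod{2^{a+1}}$ (the discarded terms carry $2^{ak}$ with $ak\ge 2a\ge a+1$) gives $F(a+1)\equiv 2F(a)+n\,2^a\sum_{r=0}^{2^a-1}r^{n-1}\pmod{2^{a+1}}$; since $n$ is even we have $n\,2^a\equiv 0\pmod{2^{a+1}}$, so $F(a+1)\equiv 2F(a)$, and with $F(1)=1$ induction yields $F(a)\equiv 2^{a-1}\equiv -2^{a-1}\pmod{2^a}$. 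Thus \emph{every} prime power $p^a\parallel N$, odd or $p=2$, satisfies $\sum_{j=0}^{N-1}j^n\equiv -N/p\pmod{p^a}$ when $p-1\mid n$ and $\equiv 0$ otherwise. Finally, reducing the claimed right-hand side $-\sum_{p\mid N,\,p-1\mid n}N/p$ modulo $p^a$ annihilates every term with $q\neq p$ (each such $N/q$ keeps the full factor $p^a$), leaving exactly $-N/p$ when $p-1\mid n$ and $0$ otherwise; matching residues at every prime power and invoking the Chinese Remainder Theorem gives the even case and completes the proof.
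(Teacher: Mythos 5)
Your proof is correct, and it is strictly more complete than the one in the paper. The paper proves only the even case, and does so by exactly your route at the odd primes: apply Corollary~\ref{PowerSumporder} to each prime power $p^{v_p(m+1)}$ exactly dividing $m+1$ and assemble the congruences with the Chinese Remainder Theorem (the odd case is simply quoted from Carlitz~\cite{Carlitz} as corrected by Moree~\cite{MoreeTopHat}). Where you go beyond the paper is at the prime $2$: Corollary~\ref{PowerSumporder} is stated only for odd primes, yet when $2\mid m+1$ the term $(m+1)/2$ always appears on the right-hand side (since $p-1=1\mid n$), so a separate argument modulo $2^{v_2(m+1)}$ is genuinely needed, and the paper's one-line proof is silent on it. Your induction showing $\sum_{r=0}^{2^a-1}r^n\equiv -2^{a-1}\pmod{2^a}$ for even $n$ is correct (the discarded binomial terms carry $2^{ak}$ with $k\ge 2$, and the linear term dies because $n$ is even), and it closes that gap. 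Your odd-case argument --- the two reflections $j\mapsto m+1-j$ and $j\mapsto m-j$ giving $m(m+1)\mid 2S_n(m)$, followed by division by $2$ using that $m(m+1)/2$ is an integer --- is also valid and supplies a proof of the half of the theorem that the paper leaves entirely to the literature.
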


\begin{proof}[Proof of the first case.]
When $n$ is even, apply Corollary \ref{PowerSumporder} to each factor $p^{v_p(m+1)}$ of $m+1$ and use the Chinese Remainder Theorem.
\end{proof}

\begin{theorem} \label{thm: Rabbit}
Let $(m,n)$ be a nontrivial solution to the Erd\H{o}s--Moser equation.\\
{\rm(i).} Let $$X:=\sum_{\substack{p \mid m-1,\\p-1 \nmid n}} \frac{m-1}{p}.$$
The pair $(n,d)=(m-1,2^n-1-X)$ satisfies congruence \eqref{EQ:coolNums}.\\
{\rm(ii).} If $p\mid m-1$, then $n=p-1+k\cdot \ord_p(2)$ for some $k\geq 0$.\\
{\rm(iii).} Given $p\mid m-1$, if $p^e \mid m-1$ with $e \geq 1$, then $p^{e-1} \mid 2^n-1$.\\
{\rm(iv).} Given $p\mid m-1$, if $p-1 \mid n$ and $p^{e} \mid 2^n-1$ with $e \geq 1$, then $p^{e+1} \mid m-1 $; in particular, $p^2 \mid m-1$.
\end{theorem}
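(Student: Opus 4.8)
The plan is to prove (i) directly from the Erd\H{o}s--Moser equation \eqref{EQ:EME} together with the Carlitz--von Staudt formula (Theorem~\ref{thm: CarlitzVS}), and then to extract (ii)--(iv) by feeding the resulting solution into Proposition~\ref{coolEquationEquivalentConditions}. Throughout I use that a nontrivial solution has $n$ even (so the first case of Theorem~\ref{thm: CarlitzVS} applies) and $m$ even (so every prime $p\mid m-1$ is odd and $\ord_p(2)$ makes sense).

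For (i), the key trick is to look at $S_n(m-2)$, because Carlitz--von Staudt applied with upper limit $m-2$ produces a congruence modulo $m-1$, which is exactly the modulus that \eqref{EQ:coolNums} needs for the first coordinate $m-1$. On one hand, peeling off the top two terms and using \eqref{EQ:EME} gives
\[
S_n(m-2)=S_n(m)-(m-1)^n-m^n=(m+1)^n-(m-1)^n-m^n,
\]
and reducing modulo $m-1$ (where $m+1\equiv2$, $m\equiv1$, $m-1\equiv0$) yields $S_n(m-2)\equiv 2^n-1\pmod{m-1}$. On the other hand, Theorem~\ref{thm: CarlitzVS} gives $S_n(m-2)\equiv-\sum_{p\mid m-1,\,p-1\mid n}\frac{m-1}{p}\pmod{m-1}$. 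Equating the two and splitting the divisor sum of $d(m-1)=-\sum_{p\mid m-1}\frac{m-1}{p}$ into its $p-1\mid n$ part and its $p-1\nmid n$ part (the latter being exactly $-X$), I obtain $2^n-1-X\equiv d(m-1)\pmod{m-1}$, which by \eqref{EQ:d} is precisely the statement that $(m-1,\,2^n-1-X)$ solves \eqref{EQ:coolNums}.

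For (ii)--(iv), I apply Proposition~\ref{coolEquationEquivalentConditions} to the solution from (i): for each prime $p\mid m-1$,
\[
2^n-1-X\equiv-\frac{m-1}{p}\pmod{p^{v_p(m-1)}}.
\]
Since every term $\frac{m-1}{q}$ of $X$ with $q\neq p$ is divisible by $p^{v_p(m-1)}$, one has $X\equiv\frac{m-1}{p}$ when $p-1\nmid n$ and $X\equiv0$ when $p-1\mid n$, both modulo $p^{v_p(m-1)}$. This produces two clean cases: if $p-1\nmid n$ then $2^n\equiv1\pmod{p^{v_p(m-1)}}$; if $p-1\mid n$ then $2^n-1\equiv-\frac{m-1}{p}\pmod{p^{v_p(m-1)}}$, and since the right side has $p$-adic order exactly $v_p(m-1)-1$, comparing orders forces $v_p(2^n-1)=v_p(m-1)-1$. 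Now (ii) follows because in either case $2^n\equiv1\pmod p$, so $\ord_p(2)\mid n$; as $\ord_p(2)\mid p-1$ by Fermat, $n\equiv p-1\pmod{\ord_p(2)}$, and writing $n=(p-1)+k\,\ord_p(2)$ gives $k\ge0$ upon invoking the known lower bounds on $n$ for nontrivial solutions. Part (iii) follows since in both cases $v_p(2^n-1)\ge v_p(m-1)-1\ge e-1$. Part (iv) is exactly the case $p-1\mid n$, where $v_p(2^n-1)=v_p(m-1)-1$ rearranges to $v_p(m-1)=v_p(2^n-1)+1\ge e+1$, i.e.\ $p^{e+1}\mid m-1$.

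The main obstacle is really just seeing the right object in (i): once one thinks to compute $S_n(m-2)$ (rather than $S_n(m)$ or $S_n(m-1)$) so that the Carlitz--von Staudt modulus lands on $m-1$, everything is forced. The only genuinely delicate point afterward is the $p$-adic order bookkeeping in the case $p-1\mid n$, where one must note that $\frac{m-1}{p}$ has order exactly $v_p(m-1)-1$, so that the congruence pins down $v_p(2^n-1)$ precisely; this single exact identity is what yields both (iii) and (iv) in that case.
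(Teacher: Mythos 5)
Your proposal is correct in substance and follows the paper's own proof almost step for step. Part (i) is the identical computation: rearrange \eqref{EQ:EME} to get $S_n(m-2)\equiv 2^n-1\pmod{m-1}$, apply Theorem \ref{thm: CarlitzVS} (legitimately, since $n$ is even for nontrivial solutions), and split the full divisor sum according to whether $p-1\mid n$; the paper routes this last step through Proposition \ref{SubsetsOfPrimesEgyptian}, while you do the same splitting by hand via \eqref{EQ:d} --- no real difference. Parts (ii)--(iv) are likewise extracted from Proposition \ref{coolEquationEquivalentConditions} applied to the pair produced in (i), exactly as in the paper. Your bookkeeping for (iii)--(iv) is a mild variant and arguably cleaner: the exact identity $v_p(2^n-1)=v_p(m-1)-1$ when $p-1\mid n$ (together with $v_p(2^n-1)\ge v_p(m-1)$ when $p-1\nmid n$) replaces the paper's induction on $e$ in (iv), and it even yields a sharper, exact form of that statement.

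The one step where you are materially vaguer than the paper is the claim $k\ge0$ in (ii), i.e.\ $n\ge p-1$ for every prime $p\mid m-1$. ``Invoking the known lower bounds on $n$'' does not suffice as stated: $p$ can be comparable in size to $m$, and $m$ is unbounded, so no absolute lower bound on $n$ can give $n\ge p-1$; what is needed is a lower bound on $n$ \emph{in terms of} $m$. The paper supplies a specific one: Moser's inequality $3n\ge 2m$ from \cite[Lemma~6]{MoreeEtAl}. Since $m$ is even, $m-1$ is odd, so every proper prime factor $p$ of $m-1$ satisfies $p\le(m-1)/3$, whence $p-1<2m/3\le n$. You should replace the hand-wave by this citation and observation. (Be aware that the extreme case $p=m-1$, i.e.\ $m-1$ prime, is not settled by this inequality, since $2m/3<m-2$ for large $m$; the paper's proof passes over this case just as yours does, so it is a shared soft spot rather than a defect introduced by your argument.)
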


\begin{proof}
(i). Rearranging the Erd\H{o}s--Moser equation, we have
\[
S_n(m-2)=(m+1)^n-m^n-(m-1)^n  \equiv 2^n-1 \pmod{m-1}.
\]
As in the proof of Theorem \ref{cor:EMCongruences}, the hypothesis implies $n$ is even. Hence, by the Carlitz-von Staudt Theorem,
\[
-\sum_{\substack{\ell \mid m-1,\\\ell-1 \mid n}} \frac{m-1}{\ell} \equiv 2^n-1 \pmod{m-1},
\]
where $\ell$ denotes a prime. By Proposition \ref{SubsetsOfPrimesEgyptian}, this proves (i).\\
(ii). If $p\mid m-1$, but $p-1\nmid n$, then reducing both sides modulo $p$ yields $2^n \equiv 1 \pmod{p},$
so that $n$ is a multiple of $\ord_p(2)$. Recall that $\ord_p(2) \mid p-1$. It follows that if $p \mid m-1$, then $n$ is a multiple of $\ord_p(2)$.

We now show that $n\ge p-1$. We refer to \cite[Lemma 6]{MoreeEtAl}, a result of Moser, which states that $3n \geq 2m$. This implies that $n \geq p-1$ and proves (ii).\\
(iii). By Proposition \ref{coolEquationEquivalentConditions},$$p^e \mid m-1 \implies p^{e-1} \mid 2^n-1-X.$$Since $X \equiv 0 \pmod{p^{e-1}}$, result (iii) follows.\\
(iv). Finally, assume that $p-1 \mid n$. We proceed by induction on $e \geq 1$. For the base case $e=1$, since $p-1 \mid n$ and $p \mid m-1$, we have $2^n-1-X \equiv 0 \pmod{p}$. By Proposition \ref{coolEquationEquivalentConditions}, the base case follows. Now assume (iv) for $e\geq1$. Then since $m-1 \equiv 0 \pmod{p^{e}}$ and $p-1 \mid n$, we get $2^n-1-X \equiv 0 \pmod{p^e}$. By Proposition~\ref{coolEquationEquivalentConditions}, the induction is complete.
\end{proof}

\begin{corollary}
If $(m,n)$ is a solution of the Erd\H{o}s--Moser equation with $m \equiv 1 \pmod{3}$, then in fact $m \equiv 1 \pmod{3^7}$.
\end{corollary}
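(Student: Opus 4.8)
The plan is to specialize Theorem \ref{thm: Rabbit} to the prime $p=3$, feed in the external fact that $3^5\mid n$, and pin down the $3$-adic valuation of $2^n-1$ precisely enough to extract the exponent $7$.

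First I would collect the arithmetic of $2$ modulo powers of $3$. Since $2^2\equiv1\pmod3$ but $2\not\equiv1\pmod3$, we have $\ord_3(2)=2$. The hypothesis $m\equiv1\pmod3$ says $3\mid m-1$, so Theorem \ref{thm: Rabbit}(ii) applies and gives $n=2+2k$ for some $k\ge0$; in particular $p-1=2$ divides $n$. Securing this divisibility is the prerequisite for later invoking part (iv).

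Next I would compute $v_3(2^n-1)$. Writing $n=2t$, we have $2^n-1=4^t-1$, and since $3\mid 4-1$ while $3\nmid 4$, the lifting-the-exponent lemma gives $v_3(4^t-1)=v_3(4-1)+v_3(t)=1+v_3(t)$. Because $3\nmid 2$, the input $3^5\mid n$ forces $v_3(t)=v_3(n)\ge5$, so that $v_3(2^n-1)\ge6$, that is, $3^6\mid 2^n-1$. With $p=3$, $p-1=2\mid n$, and $e=6$, Theorem \ref{thm: Rabbit}(iv) then yields $3^{7}\mid m-1$, which is the claim.

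The substantive number-theoretic content is all imported: the deep divisibility structure lives in Theorem \ref{thm: Rabbit}, and the crucial quantitative input $3^5\mid n$ is Moree's result; the remaining work is pure valuation bookkeeping. The main step to get right—and the only genuine computation—is the valuation bound $v_3(2^n-1)\ge6$, together with its two guard conditions: one must first verify the hypothesis $p-1\mid n$ of part (iv) (supplied by part (ii)), and one must note that $v_3(2^n-1)$ exceeds $v_3(n)$ by exactly the constant $v_3\!\left(2^{\ord_3(2)}-1\right)=1$, which is precisely what converts the exponent $5$ in $3^5\mid n$ into the exponent $7$ in $3^7\mid m-1$.
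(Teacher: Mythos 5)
Your proof is correct, and its skeleton coincides with the paper's: both arguments reduce the claim to the hypotheses of Theorem~\ref{thm: Rabbit}(iv) with $p=3$ and $e=6$, i.e.\ to verifying $2\mid n$ and $3^6\mid 2^n-1$, and then conclude $3^7\mid m-1$. The difference is in how that verification is carried out. The paper imports the stronger form of Moree's result, $2^8\cdot 3^5\mid n$, observes that consequently $\phi(3^6)=2\cdot 3^5$ divides $n$, and gets $2^n\equiv 1\pmod{3^6}$ from Euler's theorem; the divisibility $2\mid n$ needed for part (iv) comes along for free from $2^8\mid n$. You instead use only the $3$-part $3^5\mid n$ of the external input, obtain the evenness of $n$ from part (ii) of the theorem itself (via $\ord_3(2)=2$, so $n=2+2k$), and compute the exact valuation $v_3(2^n-1)=v_3(4^t-1)=1+v_3(t)=1+v_3(n)\ge 6$ by lifting the exponent. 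Your route is slightly more self-contained in its hypotheses and yields strictly more information: since the valuation is exactly $1+v_3(n)$, one sees that pushing this method past $3^7$ would require $3^6\mid n$, a sharpness remark the paper's Euler-theorem argument does not give. The paper's route, in exchange, is a one-line verification once $2^8\cdot3^5\mid n$ is quoted.
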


\begin{proof}
It is known \cite{MoreeOnMoser} that $n$ is divisible by $2^8\cdot 3^5$. Therefore $\phi(3^6) \mid n$, and it follows that $2^n-1 \equiv 0 \pmod{3^6}$. Now Theorem \ref{thm: Rabbit} part (iv) implies $3^7 \mid m-1$.
\end{proof}

\section{Bernoulli numbers}\label{Bernoulli numbers}

In this section, we apply some of the results of previous sections to study the {\it Bernoulli numbers} $B_0,B_1,B_2,B_3,B_4,\dotsc=1, -1/2,1/6,0,-1/30,\dotsc$.

\begin{corollary} \label{COR:PascalBernoulli}
For $n \geq 1$ and every positive integer $m\le n$, we have the relation
\[
 \sum_{k=m-1}^{n-1} (-1)^k \binom{n}{k}  \binom{k+1}{m} \frac{B_{k+1-m}}{k+1}=(-1)^{m+1}\binom{n}{m}.
\]
\end{corollary}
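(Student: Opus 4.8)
The plan is to read off the stated relation as an equality of the coefficients of two polynomials in a formal variable $a$, namely the two sides of Pascal's identity \eqref{EQ:Pascal}. First I would recall Faulhaber's formula, which expresses the power-sum polynomial interpolating $S_k(x)$ as
\[
S_k(x) = \frac{1}{k+1}\sum_{i=0}^{k}(-1)^i\binom{k+1}{i}B_i\, x^{\,k+1-i},
\]
where the factor $(-1)^i$ records the convention $B_1=-\tfrac12$ fixed at the start of this section (equivalently $B_i^{+}=(-1)^iB_i$, the two standard conventions differing only in the value at $i=1$). Since \eqref{EQ:Pascal} holds for every integer $a\ge0$ and both sides are polynomials in $a$ of degree $n$, agreement at infinitely many points upgrades it to an identity of polynomials, so I may legitimately compare coefficients.

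Next I would extract the coefficient of $a^m$ from each side of $\sum_{k=0}^{n-1}\binom{n}{k}S_k(a)=(a+1)^n-1$. On the right, $(a+1)^n-1$ contributes exactly $\binom{n}{m}$ for $1\le m\le n$. On the left, the monomial $a^m$ arises from the term $i=k+1-m$ of Faulhaber's formula, which forces $k\ge m-1$; that term contributes
\[
\binom{n}{k}\cdot\frac{1}{k+1}\,(-1)^{k+1-m}\binom{k+1}{m}B_{k+1-m}.
\]
Summing over $k$ from $m-1$ to $n-1$ and equating the two coefficients of $a^m$ gives
\[
\sum_{k=m-1}^{n-1}(-1)^{k+1-m}\binom{n}{k}\binom{k+1}{m}\frac{B_{k+1-m}}{k+1}=\binom{n}{m}.
\]

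Finally I would tidy the signs: using $(-1)^{k+1-m}=(-1)^{m+1}(-1)^k$ and multiplying through by $(-1)^{m+1}$ converts this display into precisely the claimed formula. The only genuine care needed is the bookkeeping around the Bernoulli convention — carrying the factor $(-1)^i$ so that the $B_i$ appearing are the $B_1=-\tfrac12$ numbers listed at the start of the section — together with the routine justification that pointwise agreement of \eqref{EQ:Pascal} on the nonnegative integers licenses the coefficient comparison. I expect this sign-tracking to be the main (though modest) obstacle; a quick check at $m=n$, where only $k=n-1$ survives and both sides reduce to $(-1)^{n-1}$, confirms the normalization.
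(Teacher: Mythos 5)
Your proposal is correct and follows essentially the same route as the paper: substitute Bernoulli's (Faulhaber's) formula for $S_k(a)$ into Pascal's identity, observe that agreement for all integers $a>0$ gives a polynomial identity, and equate coefficients of $a^m$, with the sign $(-1)^{k+1-m}=(-1)^{m+1}(-1)^k$ producing the stated form. Your index and sign bookkeeping is in fact cleaner than the paper's write-up, which reuses the letter $n$ for the substituted summation index.
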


\begin{proof}
By {\it Bernoulli's formula} (see, e.g., Conway and Guy \cite[pp. 106--109]{ConwayGuy}), the polynomial
\begin{equation} \label{EQ:Faulhaber}
P_n(x):=\frac{1}{n+1} \sum_{j=0}^n (-1)^j \binom{n+1}{j} B_j x^{n+1-j}
\end{equation}
satisfies
\begin{equation} \label{EQ:Bernoulli}
S_n(a)=P_n(a)
\end{equation}
for any positive integers~$n$ and~$a$. Substituting this into Pascal's identity \eqref{EQ:Pascal}, we expand the right-hand side and get
\[
\sum_{k=0}^{n-1} \binom{n}{k} \frac{1}{k+1} \sum_{j=0}^k (-1)^j \binom{k+1}{j} B_j a^{k+1-j}=\sum_{m=1}^{n} \binom{n}{m}a^m.
\]
Setting $n=k+1-j$, we can write this as
\[
\sum_{k=0}^{n-1} \sum_{n=1}^{k+1} (-1)^{k+1-n} \binom{n}{k} \binom{k+1}{n} \frac{B_{k+1-n}}{k+1} a^{n}=\sum_{m=1}^{n} \binom{n}{m}a^m.
\]
Since this holds for all $a>0$, we may equate coefficients when $n=m$, and the desired formula follows.
\end{proof}

In particular, the case $m=1$ is
\[
 \sum_{k=0}^{n-1} (-1)^{k} \binom{n}{k} B_{k}=n.
\]
Since $B_1=-1/2$ and $B_{2n+1}=0$ for $n>0$, this case is equivalent to
\begin{equation}
 \sum_{k=0}^{n-1} \binom{n}{k} B_{k}=0, \label{EQ:recursion}
\end{equation}
which is the standard recursion for the Bernoulli numbers.
Thus, Corollary \ref{COR:PascalBernoulli} is a generalization of this recursion.
 
As a numerical example, take $n=8$ and $m=3$:
\begin{align*}
  \sum_{k=2}^{6}  (-1)^k  \binom{8}{k} \binom{k+1}{m} \frac{B_{k-2}}{k+1}&= \frac{28}{3}B_{0}-56 B_{1}+140B_{2}-\frac{560}{3}B_3+ 140B_{4}\\
&=  \frac{28}{3} + 28 + \frac{70}{3}-0 - \frac{14}{3}=56=\binom{8}{3},
\end{align*}
as predicted.


\smallskip

\begin{corollary} \label{COR:evenPascal}
Let $n \geq 2$ be even and let $m<n$ be a positive integer. Then
\[
 \sum_{k=\lceil (m-1)/2\rceil}^{(n-2)/2} \binom{n}{2k} \binom{2k+1}{m} \frac{B_{2k+1-m}}{2k+1}=(-1)^{m+1} \frac{1}{2}\binom{n}{m},
\]
where $\lceil .\rceil$ denotes the ceiling function.
\end{corollary}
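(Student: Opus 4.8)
The plan is to imitate the proof of Corollary \ref{COR:PascalBernoulli} verbatim, but to start from the even-exponent Pascal identity (Theorem \ref{THM:evenPascal}) in place of the ordinary Pascal identity \eqref{EQ:Pascal}. First I would invoke Bernoulli's formula \eqref{EQ:Bernoulli}, $S_{2k}(a)=P_{2k}(a)$, and substitute the polynomial \eqref{EQ:Faulhaber} for each $S_{2k}(a)$ appearing on the left-hand side of Theorem \ref{THM:evenPascal}. On the right-hand side I would expand $(a+1)^n$ by the Binomial Theorem and observe that the $a^n$ and constant terms cancel against $a^n+1$, leaving
\[
\frac12\left((a+1)^n-(a^n+1)\right)=\frac12\sum_{m=1}^{n-1}\binom{n}{m}a^m.
\]
This reduces the whole identity to an equality of two polynomials in $a$, and the strategy—exactly as before—is to equate the coefficient of $a^m$ on each side.

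The core of the argument is the coefficient extraction on the left. After substitution, the left-hand side reads $\sum_{k=0}^{(n-2)/2}\binom{n}{2k}\frac{1}{2k+1}\sum_{j=0}^{2k}(-1)^j\binom{2k+1}{j}B_j\,a^{2k+1-j}$, so the term contributing to $a^m$ is the one with $j=2k+1-m$. I would then record the range constraints forced by $0\le j\le 2k$: the condition $j\ge 0$ gives $k\ge (m-1)/2$, hence $k\ge\lceil (m-1)/2\rceil$ since $k$ is an integer, which is precisely the stated lower limit of summation; the condition $j\le 2k$ is just $m\ge 1$, automatic here. Using $\binom{2k+1}{2k+1-m}=\binom{2k+1}{m}$, the coefficient of $a^m$ on the left becomes
\[
\sum_{k=\lceil (m-1)/2\rceil}^{(n-2)/2}\binom{n}{2k}\binom{2k+1}{m}\frac{B_{2k+1-m}}{2k+1}\,(-1)^{2k+1-m}.
\]

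The one small point requiring care is the sign. Since $2k+1-m$ and $m+1$ have the same parity, I would note $(-1)^{2k+1-m}=(-1)^{m+1}$, which is the constant sign that can be pulled outside the sum. Equating this coefficient with $\tfrac12\binom{n}{m}$ from the right-hand side and multiplying through by $(-1)^{m+1}$ yields the asserted formula. I expect no genuine obstacle here: the proof is entirely parallel to Corollary \ref{COR:PascalBernoulli}, and the only place to be attentive is the index bookkeeping that produces the ceiling in the lower summation limit together with the parity computation of $(-1)^{2k+1-m}$; everything else is routine manipulation of binomial coefficients and comparison of like powers of $a$.
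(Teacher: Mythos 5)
Your proposal is correct and is exactly the paper's approach: the paper's proof of Corollary \ref{COR:evenPascal} simply says to follow the proof of Corollary \ref{COR:PascalBernoulli} with Theorem \ref{THM:evenPascal} in place of Pascal's identity, with details omitted. Your write-up supplies precisely those omitted details---the coefficient extraction, the index constraint yielding the ceiling $\lceil (m-1)/2\rceil$, and the parity computation $(-1)^{2k+1-m}=(-1)^{m+1}$---all correctly.
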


\begin{proof}
We follow the steps in the previous proof, except that instead of Pascal's identity we use its analog for even exponents, Theorem~\ref{THM:evenPascal}. Details are omitted.
\end{proof}

For example, again take $n=8$ and $m=3$:
\begin{align*}
 \sum_{k=1}^{3} \binom{8}{2k} \binom{2k+1}{3} \frac{B_{2k-2}}{2k+1}&=\frac{28}{3}B_{0}+140 B_{2}+140B_{4}\\
&= \frac{28}{3} + \frac{70}{3} - \frac{14}{3}=28=\frac{1}{2}\binom{8}{3},
\end{align*}
also as predicted.

Comparing the numerical examples for Corollaries \ref{COR:PascalBernoulli} and \ref{COR:evenPascal}, one sees that Corollary~\ref{COR:evenPascal} follows from Corollary~\ref{COR:PascalBernoulli}, together with the standard recursion~\eqref{EQ:recursion} solved for $B_1$.

Let us now adopt Kellner's notation \cite{Kellner2} and write the Bernoulli numbers as
$$ B_k = \frac{n_k}{D_k}$$
in lowest terms with $D_k > 0$. Thus,
$$\frac{n_0}{D_0}=\frac{1}{1}, \frac{n_1}{D_1}=\frac{-1}{2},\frac{n_3}{D_3}=\frac{n_5}{D_5}=\frac{n_7}{D_7}=\frac{n_9}{D_9}=\dotsb=\frac{0}{1},$$
and
$$\frac{n_{2n}}{D_{2n}}=\frac{1}{6}, \frac{-1}{30}, \frac{1}{42}, \frac{-1}{30}, \frac{5}{66}, \frac{-691}{2730}, \frac{7}{6},\frac{-3617}{510}, \frac{43867}{798}, \frac{-174611}{330},\frac{854513}{138},\frac{-236364091}{2730}, \dotsc,$$
for $n=1,2,3,4,5,6,7,8,9,10,11,12,\dotsc,$ respectively.

Recall that the {\em von Staudt-Clausen Theorem} states that, for $n\ge1$,
\begin{equation}\label{EQ:Staudt-Clausen}
\sum_{p-1 \mid 2n} \frac{1}{p}+B_{2n} \equiv 1 \pmod{1}.
\end{equation}
As a consequence, the denominator of $B_{2n}$ is the square-free number $D_{2n} = \prod_{p-1 \mid 2n} p$. Then multiplying \eqref{EQ:Staudt-Clausen} by $D_{2n}$ gives
$$n_{2n} \equiv -\sum_{p\mid D_{2n}} \frac{D_{2n}}{p} \pmod{D_{2n}}.$$
It now follows from the definition of $d(n)$ in \eqref{EQ:d(n)} that {\em the numerator of $B_{2n}$ satisfies}
$$n_{2n} \equiv d(D_{2n})\pmod{D_{2n}}.$$

\begin{theorem}
Let $n$ and $k$ be positive integers. For the difference $B_{2nk}-B_{2n}$,\\
{\rm(i).} the denominator equals
$$\denom(B_{2nk}-B_{2n}) = \frac{D_{2nk}}{D_{2n}}\in\N,$$
{\rm(ii).} and the numerator satisfies the congruence
\[
\numer(B_{2nk}-B_{2n})\equiv d(\denom(B_{2nk}-B_{2n})) \pmod{\denom(B_{2nk}-B_{2n})}.
\]
\end{theorem}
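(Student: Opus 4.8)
The plan is to read off everything from the von Staudt--Clausen Theorem \eqref{EQ:Staudt-Clausen}, which pins down the polar part of each Bernoulli number prime by prime. Throughout I write $E := D_{2nk}/D_{2n}$ and extend $v_p$ additively to $\Q$ by $v_p(a/b) = v_p(a) - v_p(b)$.

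For part (i), first note that $2n \mid 2nk$, so $p - 1 \mid 2n$ forces $p - 1 \mid 2nk$; hence the set of primes dividing $D_{2n} = \prod_{p-1 \mid 2n} p$ is contained in the set dividing $D_{2nk} = \prod_{p-1 \mid 2nk} p$. As both denominators are square-free, $D_{2n} \mid D_{2nk}$, the quotient $E$ is a square-free integer coprime to $D_{2n}$, and $E = \prod_{p-1 \mid 2nk,\ p-1 \nmid 2n} p$, which already lies in $\N$. To identify the denominator of the difference I compute $v_p(B_{2nk} - B_{2n})$ in three cases. By \eqref{EQ:Staudt-Clausen}, $B_{2m} + 1/p$ is $p$-integral exactly when $p - 1 \mid 2m$, while $B_{2m}$ itself is $p$-integral otherwise. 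If $p - 1 \mid 2n$, then both $B_{2nk} + 1/p$ and $B_{2n} + 1/p$ are $p$-integral, so their difference $B_{2nk} - B_{2n}$ is $p$-integral. If $p - 1 \nmid 2nk$, then both Bernoulli numbers are $p$-integral, so again the difference is. Only when $p - 1 \mid 2nk$ but $p - 1 \nmid 2n$ does a pole survive: here $v_p(B_{2nk}) = -1$ and $v_p(B_{2n}) \ge 0$, whence $v_p(B_{2nk} - B_{2n}) = -1$. These are precisely the primes dividing $E$, so $\denom(B_{2nk} - B_{2n}) = E$, proving (i).

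For part (ii), I subtract the two instances of \eqref{EQ:Staudt-Clausen} for $2nk$ and $2n$. The integer parts cancel and the prime sums collapse to the primes dividing $E$, giving the Egyptian-fraction congruence
\[
\sum_{p \mid E} \frac{1}{p} + (B_{2nk} - B_{2n}) \equiv 1 \pmod{1}.
\]
Writing $B_{2nk} - B_{2n} = N/E$ in lowest terms with $N = \numer(B_{2nk} - B_{2n})$ (justified by part (i)) and clearing denominators by multiplying through by $E$, exactly as in the derivation of $n_{2n} \equiv d(D_{2n}) \pmod{D_{2n}}$ preceding the theorem, yields
\[
N \equiv -\sum_{p \mid E} \frac{E}{p} = d(E) \pmod{E},
\]
by Definition \ref{DEF:d(n)}; this is (ii).

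The computations are short, and no genuinely hard step is anticipated. The only points requiring care are the bookkeeping of the three valuation cases in part (i)---in particular verifying that no unexpected cancellation raises $v_p$ above $-1$ at the primes dividing $E$, which is guaranteed because the polar parts $-1/p$ cancel only when $p - 1$ divides both $2n$ and $2nk$---and the harmless observation that ``$\equiv 1$'' and ``$\equiv 0$'' coincide modulo $1$, so that the subtracted von Staudt--Clausen congruences combine into the stated form. The result is thus a direct corollary of von Staudt--Clausen together with the elementary identity $d(E) \equiv -E/p \pmod{p}$ for each prime $p \mid E$.
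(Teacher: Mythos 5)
Your proof is correct and follows essentially the same route as the paper: both rest on von Staudt--Clausen, cancel the polar parts at primes with $p-1\mid 2n$, identify the denominator as $D_{2nk}/D_{2n}$, and then clear denominators in the resulting Egyptian-fraction congruence to get $\numer \equiv d(\denom) \pmod{\denom}$. The only cosmetic difference is that you verify part (i) by a three-case computation of $v_p$, whereas the paper subtracts the two expressions $B_{2m}=A_m-\sum_{p-1\mid 2m}1/p$ directly and reads off the denominator; these are the same argument in different clothing.
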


\begin{proof}
(i). For any $m\in\N$, the von Staudt-Clausen Theorem gives $B_{2m}=A_{m}-\sum_{p-1 \mid 2m} \frac{1}{p}$, where $A_{m} \in \Z$. Hence
\begin{align} \label{BernoulliDiff}
B_{2nk}-B_{2n}=A_{nk}-A_{n}-\left(\sum_{p-1 \mid 2nk}\frac{1}{p}-\sum_{p-1 \mid 2n}\frac{1}{p}\right)=A_{nk}-A_{n}-\sum_{\substack{p-1 \mid 2nk,\\p-1 \nmid 2n}}\frac{1}{p}.
\end{align}
Therefore,
$$\denom(B_{2nk}-B_{2n}) =\prod_{\substack{p-1 \mid 2nk,\\p-1 \nmid 2n}}\! p =\frac{\prod_{p-1 \mid 2nk} p}{\prod_{p-1 \mid 2n} p}= \frac{D_{2nk}}{D_{2n}}\in\N.$$
(ii). Writing $\frac{P}{Q}:=B_{2nk}-B_{2n}$, we have, by part (i) and equation (\ref{BernoulliDiff}), 
\[
 \sum_{p\mid Q}\frac{1}{p}+\frac{P}{Q}=\sum_{\substack{p-1 \mid 2nk,\\p-1 \nmid 2n}}\frac{1}{p}+\frac{P}{Q}  \equiv 1 \pmod{1}.
\]
Since $d(Q)=-\sum_{p\mid Q}\frac{Q}{p}$, we obtain $P\equiv d(Q)\pmod{Q}$, proving (ii).
\end{proof}

For example, taking $n=1$ and $k=12$, we have
$$B_{24}-B_{2}=\frac{-236364091}{2730} - \frac{1}{6}=\frac{-39394091}{455}.$$
From Theorem~\ref{thm: LeibnitzRule} part (ii) and equation \eqref{EQ:d(p)}, we compute that $d$ of the denominator equals
\[
d(455)= d(5\cdot7\cdot13)=-5\cdot7-5\cdot13-7\cdot13 = -191.
\]
These calculations agree with (i) and (ii), which in this example state that
$$ \denom(B_{24}-B_{2})= \frac{D_{24}}{D_{2}}=\frac{2730}{6}=455$$
and that $- 39394091 \equiv d(455)\pmod{455}$.\\


Here is a result due to Agoh \cite{Agoh} (see also \cite[pp. 41, 49]{BBBG} and \cite{Kellner}).

\begin{theorem}[Agoh] \label{thm: agoh} The following statements about a positive integer $n$ are equivalent: \\
{\rm (i).} $p \mid (\frac{n}{p}-1)$, for each prime factor $p$ of $n$. \\
{\rm (ii).} $S_{n-1}(n-1) \equiv -1 \pmod{n}$. \\
{\rm (iii).} $n B_{n-1} \equiv -1 \pmod{n}$.
\end{theorem}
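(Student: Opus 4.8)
The plan is to reduce all three conditions to congruences for $S_{n-1}(n-1)$ and $nB_{n-1}$ modulo $n$, and to compare them one prime power at a time via the Chinese Remainder Theorem. The engine is the Carlitz--von Staudt Theorem (Theorem~\ref{thm: CarlitzVS}) applied with $m=n-1$, so that $m+1=n$; the parity of the exponent $n-1$ (equivalently, of $n$) selects the branch:
\[
S_{n-1}(n-1)\equiv -\sum_{\substack{p\mid n\\ p-1\mid n-1}} \frac{n}{p}\pmod{n}\ \ (n\text{ odd}),\qquad S_{n-1}(n-1)\equiv 0\pmod{\tfrac{(n-1)n}{2}}\ \ (n\text{ even}).
\]
I would prove (i)$\Leftrightarrow$(ii) and (ii)$\Leftrightarrow$(iii) separately, treating odd and even $n$ apart.

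For (i)$\Leftrightarrow$(ii) with $n$ odd, I would substitute the odd-$n$ congruence into condition (ii), $S_{n-1}(n-1)\equiv -1\pmod n$, and reduce modulo $p^{v_p(n)}$ for a fixed prime $p\mid n$. Every summand $n/q$ with prime $q\neq p$ is divisible by $p^{v_p(n)}$ and so vanishes, leaving $-n/p$ when $p-1\mid n-1$ and $0$ when $p-1\nmid n-1$. Hence (ii) forces, for every $p\mid n$, that $p-1\mid n-1$ (otherwise we would need $0\equiv -1\pmod{p^{v_p(n)}}$) and that $n/p\equiv 1\pmod{p^{v_p(n)}}$. The last congruence forces $v_p(n)=1$, since $v_p(n)\ge 2$ would give $p\mid n/p$ and hence $n/p\equiv 0$; thus $n$ is square-free and $n/p\equiv 1\pmod p$. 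Finally $p\equiv 1\pmod{p-1}$ yields $n-1\equiv n/p-1\pmod{p-1}$, so $p-1\mid n-1$ is equivalent to $p-1\mid n/p-1$; together with $p\mid n/p-1$ this is exactly condition (i), read as $p(p-1)\mid n/p-1$ for every prime $p\mid n$. Each step is reversible, giving the converse.

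The even case and the Bernoulli condition I would dispatch next. For even $n$ the even-$n$ congruence shows $S_{n-1}(n-1)$ is a multiple of $(n-1)n/2\equiv -n/2\pmod n$, hence $S_{n-1}(n-1)\equiv 0$ or $n/2\pmod n$; this equals $-1\pmod n$ only when $n=2$, so among even $n$ condition (ii) holds precisely for $n=2$. A direct check shows (i) and (iii) likewise single out $n=2$ among even $n$ (for even $n\ge 4$ the index $n-1\ge 3$ is odd, so $B_{n-1}=0$ and $nB_{n-1}=0\not\equiv -1$). For odd $n$, I would compute $nB_{n-1}$ by the same mechanism: the von Staudt--Clausen Theorem \eqref{EQ:Staudt-Clausen} writes $B_{n-1}=A-\sum_{p-1\mid n-1}1/p$ with $A\in\Z$, and multiplying by $n$ and reducing modulo $p_0^{v_{p_0}(n)}$ kills $nA$ and every term with $q\neq p_0$, leaving exactly $-n/p_0$ when $p_0-1\mid n-1$ and $0$ otherwise. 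Thus $nB_{n-1}$ and $S_{n-1}(n-1)$ obey identical congruences modulo each $p^{v_p(n)}$, so $nB_{n-1}\equiv S_{n-1}(n-1)\pmod n$ and (ii)$\Leftrightarrow$(iii). The values $n=1,2$ satisfy all three conditions and close the argument.

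The main obstacle is giving rigorous meaning to the congruence in (iii): since $B_{n-1}$ is rational, ``$nB_{n-1}\equiv -1\pmod n$'' must be read $p$-adically, as $v_p(nB_{n-1}+1)\ge v_p(n)$ for each $p\mid n$ (note $nB_{n-1}$ is a $p$-adic integer there, because $v_p(B_{n-1})\ge -1$ by von Staudt--Clausen). The careful accounting of which fractional terms $n/q$ vanish modulo a given $p_0^{v_{p_0}(n)}$ is where that theorem does the real work. A secondary subtlety is that the clean identity $nB_{n-1}\equiv S_{n-1}(n-1)\pmod n$ holds only for odd $n$ (for even $n\ge 4$ the two sides differ at the prime $2$), forcing the separate even-case analysis above. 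I would also flag that condition (i) must be interpreted as the conjunction $p\mid n/p-1$ and $p-1\mid n/p-1$ (equivalently $p(p-1)\mid n/p-1$); the divisibility $p\mid n/p-1$ alone is satisfied by Giuga numbers such as $n=30$, which fail both (ii) and (iii).
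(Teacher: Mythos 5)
The paper supplies no proof to compare against: Theorem~\ref{thm: agoh} is quoted as a known result of Agoh, with pointers to \cite{Agoh}, \cite{BBBG}, and \cite{Kellner}. So your argument must be judged on its own, and it holds up. It is self-contained and uses precisely the paper's own toolkit: the Carlitz--von Staudt theorem (Theorem~\ref{thm: CarlitzVS}) with $m=n-1$ for condition (ii), the von Staudt--Clausen theorem \eqref{EQ:Staudt-Clausen} for condition (iii), prime-by-prime reduction modulo $p^{v_p(n)}$ followed by the Chinese Remainder Theorem, and a separate even-$n$ analysis. The delicate points are handled correctly: the congruence in (iii) is read $p$-adically (legitimate since $v_p(B_{n-1})\ge -1$ by von Staudt--Clausen), the deduction that (ii) forces $v_p(n)=1$ (squarefreeness) is right, the translation $p-1\mid n-1 \iff p-1\mid n/p-1$ via $n\equiv n/p \pmod{p-1}$ is right, and the bridge $nB_{n-1}\equiv S_{n-1}(n-1)\pmod{n}$ for odd $n$ (both sides being congruent to $-\sum_{p\mid n,\,p-1\mid n-1} n/p$) is exactly the correct way to link (ii) and (iii).

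The most important part of your proposal is the final flag, and it deserves to be stated more forcefully: as printed, the theorem is false. Condition (i) is exactly the Giuga-number condition (in the paper's own notation, $d(n)\equiv -1\pmod n$), and every known Giuga number satisfies it while failing (ii) and (iii). Concretely, $n=30$ satisfies (i), yet $S_{29}(29)\equiv 0\pmod 3$, so (ii) fails, and $B_{29}=0$, so (iii) fails; the same happens for $858$, $1722$, etc. What is actually equivalent to (ii) and (iii) --- and what your argument proves --- is the conjunction you identify: $p\mid (n/p-1)$ \emph{and} $(p-1)\mid (n/p-1)$ for every prime $p\mid n$, equivalently $p(p-1)\mid (n/p-1)$. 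That is how the Giuga--Agoh equivalence is stated in the cited literature (see \cite[Theorem 1]{BBBG}); the paper's condition (i) has dropped the second divisibility. So your proof is correct, but it proves the corrected statement, and in doing so it exposes a misstatement in the paper that should be repaired.
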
 

We prove a related result, using a theorem of Kellner.

\begin{theorem} \label{thm: pseudo} 
{\rm (i).} Let $n$ and $d$ be positive integers, with $n$ square-free. Then $p \mid (\frac{n}{p}+d)$, for each prime factor $p$ of $n$, if and only if $ S_{\phi(n)}(n) \equiv d \pmod{n}$. \\
{\rm (ii).} For {\em any} positive integer $n$, we have the congruence
$$S_{\phi(n)}(n) \equiv n B_{\phi(n)} \pmod{n}.$$
\end{theorem}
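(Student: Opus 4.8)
The plan is to treat the two parts separately, reducing everything to a prime-by-prime analysis. For part (i), since $n$ is square-free, the Chinese Remainder Theorem reduces the congruence $S_{\phi(n)}(n)\equiv d\pmod{n}$ to the family of congruences $S_{\phi(n)}(n)\equiv d\pmod{p}$, one for each prime factor $p$ of $n$. The crucial observation is that for square-free $n$ one has $\phi(n)=\prod_{q\mid n}(q-1)$, so $p-1\mid\phi(n)$ for every $p\mid n$. For odd $p$, I would write $n=p\cdot(n/p)$ with $p\nmid(n/p)$ and apply Corollary \ref{PowerSumporder} with exponent $\phi(n)$, $d=1$, and $q=n/p$; since $p-1\mid\phi(n)$ this yields $S_{\phi(n)}(n)\equiv -\,n/p\pmod{p}$. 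For the prime $p=2$ I would argue directly that $j^{\phi(n)}\equiv j\pmod 2$, so $S_{\phi(n)}(n)$ is congruent to the number of odd integers in $\{1,\dots,n\}$, namely $n/2$, which matches $-\,n/p$ mod $2$. In all cases the condition $S_{\phi(n)}(n)\equiv d\pmod{p}$ becomes $-\,n/p\equiv d\pmod{p}$, i.e.\ $p\mid(\tfrac{n}{p}+d)$, establishing the equivalence.

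For part (ii), I would first dispose of the square-free case using results already in hand, which requires no new input. Indeed, for square-free $n$ the solution $d=d(n)=-\sum_{p\mid n}n/p$ satisfies $p\mid(\tfrac{n}{p}+d(n))$ for each $p\mid n$, so part (i) gives $S_{\phi(n)}(n)\equiv d(n)\pmod{n}$. Writing $D:=D_{\phi(n)}=\prod_{p-1\mid\phi(n)}p$, every $p\mid n$ satisfies $p-1\mid\phi(n)$, hence $n\mid D$ and $D=nD'$ with $\gcd(n,D')=1$. The von Staudt--Clausen numerator congruence $n_{\phi(n)}\equiv d(D)\pmod{D}$ together with the product rule of Theorem \ref{thm: LeibnitzRule}(ii), giving $d(D)\equiv D'\,d(n)\pmod{n}$, then yields $nB_{\phi(n)}=n_{\phi(n)}/D'\equiv d(n)\equiv S_{\phi(n)}(n)\pmod{n}$, as desired.

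For general (possibly non-square-free) $n$, I would pass through Faulhaber's formula \eqref{EQ:Bernoulli}, writing $S_{\phi(n)}(n)$ as the polynomial $\sum_{i\ge 1}c_i\,n^i$ whose linear coefficient is $c_1=B_{\phi(n)}$, so that $S_{\phi(n)}(n)-nB_{\phi(n)}=\sum_{i\ge 2}c_i\,n^i$. The goal is then to show, for each prime $p\mid n$ with $p^a\Vert n$, that $v_p(c_i\,n^i)\ge a$ for every $i\ge 2$, i.e.\ $v_p(c_i)\ge-(i-1)a$. Since $p\mid n$ forces $p-1\mid\phi(n)$, I would invoke a theorem of Kellner controlling the $p$-adic valuations (equivalently, the denominators) of the coefficients of the power-sum polynomial, which supplies exactly the bound $v_p(c_i)\ge-(i-1)$ at such primes; combined with $a\ge 1$ this gives $v_p(c_i)\ge-(i-1)a$ and completes the proof.

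The self-contained parts are part (i) and the square-free case of (ii); the main obstacle is the general case of (ii), where one must rule out terms $c_i n^i$ in which a high power of $p$ divides the denominator of $c_i$ (arising from factors $k+1-j$ divisible by $p$ in the Bernoulli expansion). This is precisely the delicate $p$-adic bookkeeping on Faulhaber coefficients that Kellner's theorem resolves, and everything hinges on the divisibility $p-1\mid\phi(n)$ holding simultaneously for all $p\mid n$.
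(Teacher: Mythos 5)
Your part (i) is correct and is essentially the paper's own argument: both reduce modulo each prime divisor of $n$, use $p-1\mid\phi(n)$, and reassemble by the Chinese Remainder Theorem using square-freeness. The only difference is cosmetic: the paper obtains $S_{\phi(n)}(n)\equiv-\frac{n}{p}\pmod p$ uniformly for all primes $p\mid n$ from Lemma~\ref{LEM:linear} together with Fermat's little theorem, whereas you invoke Corollary~\ref{PowerSumporder} (stated only for odd $p$) and must patch $p=2$ by hand.

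Part (ii) is where you genuinely differ. The paper's proof is a one-line citation: Kellner's congruence $S_k(m)\equiv(m+1)B_k\pmod{m+1}$, valid for \emph{every} even $k$ and every $m$ (\cite[Theorem~1.2]{Kellner}), applied with $k=\phi(n)$, $m=n-1$. Your square-free case is a correct and self-contained alternative---part (i) with $d\equiv d(n)$, the von Staudt--Clausen numerator congruence $n_{\phi(n)}\equiv d(D_{\phi(n)})\pmod{D_{\phi(n)}}$, and the Leibniz rule of Theorem~\ref{thm: LeibnitzRule}(ii)---and it buys independence from \cite{Kellner} for square-free $n$, though it is subsumed by the general case. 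In the general case you are in effect re-proving Kellner's theorem rather than citing it, and there the key bound is left vague and is not quite right as stated. Writing $c_i=\pm\binom{\phi(n)}{i-1}B_{\phi(n)+1-i}/i$, square-freeness of Bernoulli denominators gives only $v_p(c_i)\ge-1-v_p(i)$; your claimed bound $v_p(c_i)\ge-(i-1)$ therefore does not follow precisely when $p=2$ and $i=2$ (where $-1-v_2(2)=-2$), which matters when $v_2(n)=1$. That case is rescued only by the evenness of $\phi(n)$: either $\phi(n)\ge4$, so $B_{\phi(n)-1}=0$, or $\phi(n)=2$ and the explicit factor $\phi(n)$ in $c_2=\pm\tfrac{\phi(n)}{2}B_{\phi(n)-1}$ supplies the missing factor of $2$. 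You would need to say this. Finally, your closing diagnosis is off target: the divisibility $p-1\mid\phi(n)$ holds automatically for \emph{every} positive integer $n$ (since $p-1\mid\phi(p^{v_p(n)})\mid\phi(n)$), not just square-free $n$, and it plays no role in the coefficient bound; the only arithmetic input in (ii) is von Staudt--Clausen, which is exactly why Kellner's congruence requires no compatibility condition between the exponent and the modulus.
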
 

\begin{proof}
(i). The statement holds for $n=1$. Now take $n>2$, let $p$ be a prime factor of~$n$, and set $n=pq$. Then using
Lemma \ref{LEM:linear} we have
\[
\sum_{j=1}^{n}j^{\phi(n)} \equiv  q \sum_{j=1}^{p}j^{\phi(n)} \equiv q \sum_{j=1}^{p-1}j^{\phi(n)}  \pmod{p}.
\]
Since $n$ is square-free, $\gcd(p,q)=1$ and so $\phi(n)=\phi(p)\phi(q)$. Thus  $\phi(n)$ is divisible by  $\phi(p)=p-1$, and hence by Fermat's little theorem,
\[
q \sum_{j=1}^{p-1}j^{\phi(n)} \equiv q(p-1)\equiv -q \pmod{p}. 
\]
As $q=n/p$, we get
\begin{equation}\label{eq:agoh}
\text{prime } p\mid n \implies \sum_{j=1}^{n}j^{\phi(n)} \equiv - \frac{n}{p} \pmod{p}.
\end{equation}

To prove (i), assume first that $p \mid (\frac{n}{p}+d)$ for all primes $p\mid n$, so that $- \frac{n}{p} \equiv d \pmod{p}$. Together with \eqref{eq:agoh} and the square-freeness of $n$, this implies that $\sum_{j=1}^{n}j^{\phi(n)} \equiv d \pmod{n}$. Conversely, if the latter holds, then \eqref{eq:agoh} yields $-\frac{n}{p} \equiv d \pmod{p}$. This proves (i).

(ii). It is easy to see that (ii) holds if $n=1$ or $2$. Now take $n \geq 3$ and recall that then $\phi(n)$ is even.
For any $n,m \in \N$ with $n$ even, Kellner \cite[Theorem 1.2]{Kellner} proved that
\[
S_n(m) \equiv (m+1) B_{n} \pmod{m+1}.
\]
Setting $n= \phi(n)$ and $m=n-1$, part (ii) follows.
\end{proof}

When $n>3$ is prime, we can improve part (ii) to a supercongruence.

\begin{theorem} \label{thm: prime super} 
If $p>3$ is prime, then
$$S_{p-1}(p) \equiv p B_{p-1} \pmod{p^3}.$$
\end{theorem}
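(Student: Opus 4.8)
The goal is to prove the supercongruence $S_{p-1}(p) \equiv p B_{p-1} \pmod{p^3}$ for primes $p>3$, sharpening the modulo-$p^2$ statement from Theorem~\ref{thm: pseudo}(ii) (with $n=p$, since $\phi(p)=p-1$). The plan is to compute both sides modulo $p^3$ separately and match them.

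First I would analyze the left-hand side $S_{p-1}(p) = S_{p-1}(p-1) + p^{p-1}$. Since $p>3$, we have $p-1\ge 4$, so $p^{p-1}\equiv 0\pmod{p^3}$ and it suffices to understand $S_{p-1}(p-1)=\sum_{j=1}^{p-1}j^{p-1}$ modulo $p^3$. The natural tool is Corollary~\ref{Snp2 mod p3}, but that governs $S_n(p^2)$; more directly useful is the classical refinement of Fermat's little theorem. Each term $j^{p-1}$ with $\gcd(j,p)=1$ equals $1$ modulo $p$, and writing $j^{p-1}=1+p\,w_j$ (where $w_j$ is essentially the Fermat quotient), one gets $S_{p-1}(p-1)\equiv (p-1) + p\sum_{j=1}^{p-1}w_j \pmod{p^2}$. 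To reach modulo $p^3$ I would expand more carefully, or — cleaner — invoke the known expansion of the power sum in terms of Bernoulli numbers via Bernoulli's formula \eqref{EQ:Bernoulli}, $S_{p-1}(p)=P_{p-1}(p)$.

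The cleanest route is to use Bernoulli's formula directly on both sides. By \eqref{EQ:Faulhaber} and \eqref{EQ:Bernoulli},
\[
S_{p-1}(p)=\frac{1}{p}\sum_{j=0}^{p-1}(-1)^j\binom{p}{j}B_j\,p^{p-j}=\sum_{j=0}^{p-1}(-1)^j\binom{p}{j}\frac{B_j}{p}\,p^{p-j}.
\]
I would track the $p$-adic order of each summand. The highest-power terms in $p$ come from large $j$: the $j=p-1$ term contributes $(-1)^{p-1}\binom{p}{p-1}B_{p-1}p^{p-1}/p = p\,B_{p-1}$ (using $\binom{p}{p-1}=p$ and $p$ odd), which is exactly the desired right-hand side. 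The $j=p-2$ term carries $B_{p-2}=0$ (odd index $>1$), and I would show every remaining term $j\le p-3$ contributes something divisible by $p^3$. The key points are that $\binom{p}{j}$ is divisible by $p$ for $1\le j\le p-1$, that $p^{p-j}$ supplies large powers when $j$ is small, and — the delicate cases — that the denominators $D_j$ of $B_j$ do not introduce a factor of $p$ in the relevant range. Here the von Staudt--Clausen theorem \eqref{EQ:Staudt-Clausen} is essential: $p\mid D_j$ exactly when $p-1\mid j$, and for $0<j<p-1$ this never happens, so no $p$ cancels from the denominator and the $p$-orders add as expected.

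The main obstacle is precisely this $p$-adic bookkeeping near $j=p-1$: after the leading term $pB_{p-1}$, I must confirm that the next potentially-surviving contributions all vanish modulo $p^3$. The terms with small $j$ are easy (the factor $p^{p-j}$ alone kills them since $p>3$ forces $p-j\ge 3$ for $j\le p-3$). The genuinely careful analysis is for $j$ close to $p-1$, where $p^{p-j}$ is small and one must combine the single factor of $p$ from $\binom{p}{j}$ with the orders of $p^{p-j}/p$ and $B_j$; I would handle $j=p-1$ (the main term), note $j=p-2$ dies by $B_{p-2}=0$, and for $j\le p-3$ use $\mathrm{ord}_p\!\big(\binom{p}{j}B_j p^{p-j-1}\big)\ge 1+0+2=3$, where the middle $0$ is guaranteed by von Staudt--Clausen. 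Assembling these estimates yields $S_{p-1}(p)\equiv pB_{p-1}\pmod{p^3}$, as claimed.
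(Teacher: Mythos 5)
Your proof is correct and follows essentially the same route as the paper: both rest on Bernoulli's formula \eqref{EQ:Bernoulli} together with the von Staudt--Clausen theorem to control $p$-adic orders in the expansion $\frac{1}{p}\sum_{j}(-1)^j\binom{p}{j}B_j\,p^{p-j}$, isolating the $j=p-1$ term $pB_{p-1}$ and killing the rest modulo $p^3$. The only difference is that you carry out the $p$-adic bookkeeping explicitly (your estimate $1+0+2=3$ for $1\le j\le p-3$, with $j=0$ handled by the large power $p^{p-1}$ alone), whereas the paper defers exactly this computation to the cited proof of \cite[Theorem~1]{Sondow}, where the same sum appears symbolically as $(B+p)^p/p$.
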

\begin{proof}
Bernoulli's formula \eqref{EQ:Bernoulli} gives $S_{p-1}(p-1) = P_{p-1}(p-1)$. For prime $p>3$, the von Staudt-Clausen Theorem \eqref{EQ:Staudt-Clausen} implies that $P_{p-1}(p-1)  \equiv p B_{p-1} \pmod{p^3}$ (for details, see the proof of \cite[Theorem~1]{Sondow}, where $P_{p-1}(p-1)$ is written symbolically as $(B+p)^p/p$). As $S_{p-1}(p) \equiv S_{p-1}(p-1) \pmod{p^3}$, this proves the theorem.
\end{proof}

\section{Moser's Mathemagical Rabbits}\label{Moser's Magical Rabbits}
In this section, we reveal some of the magic behind Moser's ``mathemagical rabbits'' \cite{MoreeTopHat}. In particular, we give a hint as to why one could expect $m,\frac{m+2}{2},2m+1$, and $2m+3$ to be square-free. 
Consider the generalized Erd\H{o}s--Moser equation:
\[
S_n(m)=a(m+1)^n \iff (a+1) S_n(m) = a S_n(m+1).
\]
Let $P_n(x) \in \Q[x]$ denote the polynomial interpolating $S_n$ in \eqref{EQ:Faulhaber}. Then
\[
(a+1) P_n(m)=a P_n(m+1).
\]
Let $L_n \in \Q$ satisfy the conditions that
\[
L_n P_n(x) \in \Z[x]
\]
and that the greatest common divisor of the coefficients of $L_n P_n(x)$ is $1$.
Set $Q_n(x) := L_n P_n(x)$. Then
\[
(a+1) Q_n(m)=a Q_n(m+1).
\]
On the other hand, it is known that $P_n(x)$ is given by \eqref{EQ:Faulhaber}.
For $j=1,2,\ldots,n$, let 
\[
R_{j}=R_{j}(n):=\frac{D_{j}}{\gcd(D_{j},\binom{n+1}{j})} \in\N.
\]
Then
\[
L_n=(n+1) \lcm(R_1,R_2,\ldots,R_n)
\]
and we obtain
\[
Q_n(x)=\lcm(R_1,R_2,\ldots,R_n) \sum_{j=0}^n (-1)^j \binom{n+1}{j} B_j x^{n+1-j}.
\]
We now focus on the Erd\H{o}s--Moser equation, when $a=1$ and $n$ is even, i.e., a counterexample to the Erd\H{o}s--Moser conjecture:
\[
2 Q_n(m)=Q_n(m+1).
\]
In this case, Corollary \ref{FactorsOfPowerSumPolynomial} implies $m(m+1)(2m+1)$ divides $Q_n(m)$, and $(m+1)(m+2)(2m+3)$ divides $Q_n(m+1)$. Note the appearance of the numbers $m,m+2,2m+1,2m+3$ as divisors---these are the same numbers that appear in Moser's trick.

Consider $Q_n(m+1)$ modulo $m$:
\[
0 \equiv Q_n(m+1)=\lcm(R_1,R_2,\ldots,R_n) \sum_{j=0}^n (-1)^j \binom{n+1}{j} B_j (m+1)^{n+1-j} \pmod{m}
\] 
\[
\equiv \lcm(R_1,R_2,\ldots,R_n) \sum_{j=0}^n (-1)^j \binom{n+1}{j} B_j =(n+1) \lcm(R_1,R_2,\ldots,R_n) = L.
\]
Therefore $m$ divides $L$. The denominators of Bernoulli numbers are square-free, so we almost obtain another proof of the square-freeness of $m$.

\bigskip
\noindent{\bf Acknowledgments}. The authors are very grateful to Wadim Zudilin for many helpful suggestions on the terminology and exposition of the first half of the paper.

The second author was supported by the National Science Foundation Graduate Research Fellowship under Grant No. DGE 1106400. Any opinion, findings, and conclusions or recommendations expressed in this material are those of the authors and do not necessarily reflect the views of the National Science Foundation.

\end{document}